\documentclass[thmsa,11pt]{article}
\usepackage{authblk}
\normalfont\sffamily
\usepackage{amsfonts}
\usepackage{amssymb}
\usepackage[active]{srcltx}
\usepackage{color}
\usepackage[x11names]{xcolor}
\usepackage{amsmath}
\usepackage{mathtools}
\usepackage{amsthm}
\usepackage{bm} 
\usepackage{graphicx}
\usepackage{textcomp}
\usepackage{a4wide}
\usepackage{multirow}
\usepackage{diagbox}
\usepackage{color}
\usepackage{makecell}
\usepackage{longtable}
\usepackage{cellspace}
\usepackage{tabularx}
\usepackage{array,booktabs,ragged2e}
\usepackage{pxfonts}
\usepackage{marvosym}
\usepackage[format=hang]{caption}
\usepackage{subcaption}
\usepackage{url}
\usepackage{xfrac}

\usepackage[normalem]{ulem}
\usepackage{comment}
\usepackage{pxfonts}
\usepackage[margin=3cm]{geometry} 
\usepackage{titlesec}

\usepackage{setspace}
\usepackage{footmisc}
\footnotesep 9.16pt 
\usepackage{appendix}

\usepackage{array,booktabs,ragged2e}
\newcolumntype{R}[1]{>{\RaggedRight}p{#1}}

\usepackage{cellspace}
\setlength\cellspacetoplimit{2pt}
\setlength\cellspacebottomlimit{2pt}

\usepackage{hyperref}
%
%
%
%
\newtheorem{theorem}{Theorem}
\newtheorem{lemma}{Lemma}

\newtheorem{proposition}{Proposition}
\newtheorem{corollary}{Corollary}
\newtheorem{definition}{Definition}
\newtheorem{conjecture}{Conjecture}

\def \be {\begin{equation}}
\def \ee {\end{equation}}



\renewcommand{\thesubsection}{\arabic{subsection}}
\makeatletter
\def\@seccntformat#1{\@ifundefined{#1@cntformat}%
   {\csname the#1\endcsname\quad}
   {\csname #1@cntformat\endcsname}}
\newcommand\section@cntformat{}     
\makeatother

\title{Squares with three digits}
\author[2]{Michael Gei\ss er}
\author[2]{Theresa K\"orner}
\author[1]{Sascha Kurz}
\author[2]{Anne Zahn}
\affil[1]{University of Bayreuth, sascha.kurz@uni-bayreuth.de}
\affil[2]{University of Bayreuth, students participating in the course {\lq\lq}Integer Sequences{\rq\rq} in summer term 2021, \texttt{firstname}.\texttt{lastname}@uni-bayreuth.de}
\date{}

\begin{document}
\setstretch{1.2}


\maketitle

\subsection{Introduction}

Consider the equations 
{\footnotesize$$
  2108436491907081488939581538^2 = 4445504440405440505004450045555054500055550554550445444
$$
}
and
{\footnotesize$$
  10100000000010401000000000101^2 = 102010000000210100200000110221001000002101002000000010201.
$$
}
\noindent
\!The common property of the numbers on the left hand sides is that their squares consist of three different digits only. The origin of the question of squares 
with few different digits dates back at least to $1991$, see \cite[Exercise 2.2]{vardi1991computational}  where squares consisting entirely of the three digits $2$, 
$4$, and $8$ are considered. In \cite[Problem F24]{guy1994unsolved} the question of Sin Hitotumatu whether there are only finitely many squares, not ending with zero, 
with just two different decimal digits, is reported. Today only $24$ such numbers are known, the largest being $81619^2=6661661161$. Since there is not so much to 
discover for two digits, people moved over to the study of squares with three different digits. A corresponding webpage, started in 1996 and still updated, is 
\url{www.worldofnumbers.com/threedigits.htm}. And indeed, there a still things to discover. The two examples, mentioned in the first few lines, were unknown so far and the 
second can even be generalized to a $2$-parameter family yielding an infinite number of examples.

Of course, we should not let ourselves get distracted by our fingers and toes. So, given an integer $B\ge 2$, called \emph{base} or \emph{radix}, each positive integer $N$ can 
be uniquely written as
$$
  N=\sum_{i=0}^{n-1} a_iB^i,
$$
where $a_i\in\{0,1,\dots,B-1\}$ are called \emph{digits} for all $0\le i\le n-1$, $n\in\mathbb{N}_{>0}$ is the \emph{length}, which is determined via the condition $a_{n-1}\neq 0$. 
The \emph{base-$B$-representation} on $N$ is denoted by
$$
  \varphi_B(N):=\left[a_{n-1},a_{n-2},\dots,a_1,a_0\right]_B,
$$
where we sometimes drop the commas and the square brackets with index $B$ whenever the base is clear from the context. The digit $a_0$ is also called the \emph{last digit} and 
$l_B(N):=n$ the \emph{length} of $N$ (in base $B$).

Here we are interested in positive integers $N$ such that the base-$B$-representation of $N^2$ contains exactly three different digits, i.e., if $N^2=\left[b_{n-1},\dots,b_0\right]_B$, 
then we require $|\{b_i\,:\, 0\le i\le n-1\}|=3$. As an abbreviation we set $D_B(N):=\{a_i\,:\,0\le i\le n-1\}$, where $N=\left[a_{n-1},\dots,a_0\right]_B$. An example 
in base $B=10$ is given by $N=130834904430015239$, $N^2=17117772217211221211117217772227121$, and $D_{10}(N^2)=\{1,2,7\}$. If $0\in D_B(N^2)$, then $D_B(\left(B^m\cdot N\right)^2)=D_B(N^2)$ 
for each positive integer $N$ and each non-negative integer $m$. So, we restrict ourselves to the cases where $N$ is not divisible by $B$.
 
In principle, for $B\ge 4$ positive integers $N$ with $|D_B(N^2)|=3$ are rather rare. However, several {\lq\lq}infinite patterns{\rq\rq} exist. An example for base $B=10$ is given by
\begin{equation}
  \label{ex_infinite_pattern}
  [1\,\underset{k\text{ times}}{\underbrace{6\dots 6}}\,5]^2 
  =[2\,\underset{k\text{ times}}{\underbrace{7\dots 7}}\,\underset{k+1\text{ times}}{\underbrace{2\dots 2}}\,5],
\end{equation} 
where $k\ge 0$. 
 
The aim of this paper is to summarize the current knowledge on squares with three digits, scattered around webpages and newsgroup posting, and to add a few new insights. While we 
will mostly focus on the base $B=10$, several results are presented for general values of $B$. The used mathematical tools are completely elementary. However, we give complete proofs 
of all statements or explicitly state them as conjectures. The closest {\lq\lq}research paper{\rq\rq}, that we are aware of is \cite{bennett2017squares}. Here the authors determine 
all integers $N$ such that $N^2$ has at most three non-zero digits in base $q\in \{2,3,4,5,8,16\}$. For odd primes $q$, $u>v>3$, and $r^2,s,t<q$ they also determine all solutions of the 
Diophantine equation
$$
  N^2 =r^2 +s\cdot q^u+t\cdot q^v.
$$ 
We are not aware of any number theoretic tools that might be applied if the non-zero digits are not sparse but they attain few distinct values only.

The remaining part of this paper is structured as follows. In Section~\ref{sec_computing_digits} we briefly consider the computation of the digits of a square. 
Infinite patters of integers whose squares consist of just three decimal digits are the topic of Section~\ref{sec_infinite_patterns}. Feasible and infeasible 
triples of digits are discussed in Section~\ref{sec_triples}. We consider the properties of admissible suffixes and prefixes in Section~\ref{sec_suffices} before 
we briefly summarize the known enumeration algorithms in Section~\ref{sec_search_algorithms}. A few preliminary results for base $B=9$ are given in Appendix~\ref{sec_b_9}.
 
\subsection{Computing the sequence of digits of squares}
\label{sec_computing_digits}
By $a\operatorname{rem} b$ we denote the remainder of the division of $a$ by $b$ and by $a\operatorname{div}b$ we 
abbreviate $\left\lfloor a/b\right\rfloor$ whenever $a,b\in\mathbb{N}_{>0}$. With this we have 
$$
  (a\operatorname{div}b) \cdot b+(a\operatorname{rem} b) =a.
$$   

Surely, we all learned in elementary school how to perform long multiplication. So, the following lemma is just a more formal 
statement settling notation for the case of squaring.  
\begin{lemma}
  \label{lemma_mult_last_digits}
  Let $B\ge 2$ be a base and $a_0,a_{n-1}\in\{0,\dots,B-1\}$ be arbitrary digits. If 
  $$
    \left[b_{m-1},\dots,b_0\right]_B=\big(\left[a_{n-1},\dots,a_0\right]_B\big)^2,
  $$
  then we have
  \begin{equation}
    b_j=\left(c_{j-1}+\sum_{i=0}^j a_i\cdot a_{j-i}\right)\operatorname{rem}B
  \end{equation}
  for all $0\le j\le n-1$, where $c_{-1}:=0$ and 
  \begin{equation}
    c_j:=\left(c_{j-1}+\sum_{i=0}^j a_i\cdot a_{j-i}\right)\operatorname{div}B
  \end{equation}
  for all $0\le j\le n-1$. For all $n\le j\le 2n-2$ we have
  \begin{equation}
    b_j=\left(c_{j-1}+\sum_{i=j-n+1}^{n-1} a_i\cdot a_{j-i}\right)\operatorname{rem}B,
  \end{equation}
  where
  \begin{equation}
    c_j=\left(c_{j-1}+\sum_{i=j-n+1}^{n-1} a_i\cdot a_{j-i}\right)\operatorname{div}B.
  \end{equation}
  For $j=2n-1$ we have $b_j=c_{j-1}$ and $c_j=0$.
\end{lemma}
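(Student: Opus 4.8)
The plan is to expand the square directly and then verify that the stated recursion is exactly the carry‑propagation that turns the coefficient sequence into a base‑$B$ representation. First I would multiply out
$$
  \big(\left[a_{n-1},\dots,a_0\right]_B\big)^2=\left(\sum_{i=0}^{n-1}a_iB^i\right)^2=\sum_{j=0}^{2n-2}s_jB^j,
  \qquad
  s_j:=\sum_{i=\max(0,\,j-n+1)}^{\min(j,\,n-1)}a_i\cdot a_{j-i},
$$
collecting all products $a_ia_k$ with $i+k=j$. For $0\le j\le n-1$ the constraint $0\le j-i\le n-1$ is implied by $0\le i\le j$, so the inner sum is $\sum_{i=0}^{j}a_ia_{j-i}$; for $n\le j\le 2n-2$ it becomes $\sum_{i=j-n+1}^{n-1}a_ia_{j-i}$. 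These are precisely the sums appearing in the two displayed formulas of the lemma, so with $x\mapsto x\operatorname{rem}B$ and $x\mapsto x\operatorname{div}B$ the stated equations read $b_j=(c_{j-1}+s_j)\operatorname{rem}B$ and $c_j=(c_{j-1}+s_j)\operatorname{div}B$.

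Next I would prove by induction on $j\in\{0,\dots,2n-2\}$ the invariant
$$
  \sum_{i=0}^{j}s_iB^i=\sum_{i=0}^{j}b_iB^i+c_jB^{\,j+1}.
$$
For $j=0$ this is $s_0=a_0^2=(a_0^2\operatorname{rem}B)+(a_0^2\operatorname{div}B)\cdot B=b_0+c_0B$, using $c_{-1}=0$. For the induction step, assuming the invariant for $j-1$,
$$
  \sum_{i=0}^{j}s_iB^i=\left(\sum_{i=0}^{j-1}b_iB^i+c_{j-1}B^{\,j}\right)+s_jB^j=\sum_{i=0}^{j-1}b_iB^i+(c_{j-1}+s_j)B^j,
$$
and the identity $(x\operatorname{div}b)\cdot b+(x\operatorname{rem}b)=x$ gives $c_{j-1}+s_j=b_j+c_jB$, which yields the invariant for $j$.

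Finally, evaluating the invariant at $j=2n-2$ gives $N^2=\sum_{i=0}^{2n-2}b_iB^i+c_{2n-2}B^{\,2n-1}$, so putting $b_{2n-1}:=c_{2n-2}$ and $c_{2n-1}:=0$ produces $N^2=\sum_{i=0}^{2n-1}b_iB^i$. To identify this with the base‑$B$ representation I would invoke uniqueness: the $b_i$ with $i\le 2n-2$ lie in $\{0,\dots,B-1\}$ by construction, and from $N<B^n$ we get $N^2<B^{2n}$, whence $c_{2n-2}B^{\,2n-1}\le N^2<B^{2n}$ forces $b_{2n-1}=c_{2n-2}\in\{0,\dots,B-1\}$ as well; dropping any leading zeros then shows $m\le 2n$ and that the $b_i$ are the digits of $N^2$. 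The \emph{main obstacle} is purely bookkeeping: getting the convolution index ranges right on both sides of the threshold $j=n$ and handling the two boundary cases $j=0$ (where $c_{-1}=0$) and $j=2n-1$ (where the new digit is just the leftover carry). There is no genuine difficulty beyond that — in particular the global bound $N^2<B^{2n}$ is exactly what certifies that the terminal carry is itself a legitimate digit, so no separate estimate on the size of the $c_j$ is needed.
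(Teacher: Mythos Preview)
Your argument is correct and is precisely the long-multiplication-with-carry verification that the paper alludes to; the paper itself does not give a proof of this lemma, remarking only that it is the formal version of schoolbook squaring. In that sense you have written out in full the details the authors chose to omit, with the same underlying idea (convolution coefficients plus carry propagation), so there is nothing to add or correct.
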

For the 	carryovers $c_j$ we have the following bounds.
\begin{lemma}
  \label{lemma_carryover_bounds}
  With the notation and conditions from Lemma~\ref{lemma_mult_last_digits} we have 
  \begin{equation}
    \label{eq_c_ub_1}
    0 \le c_j \le (j+1)B-(j+2)=(j+1)(B-1)-1
  \end{equation}
  for all $0\le j\le n-1$ and 
  \begin{equation}
    \label{eq_c_ub_2}
    0\le c_{n-1+j}\le (n-j)B-(n-j)=(n-j)(B-1)
   \end{equation}
   for all $1\le j\le n-1$. 
\end{lemma}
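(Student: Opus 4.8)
The plan is to establish both chains of inequalities by induction on the column index, feeding in nothing more than the trivial estimate $a_i\cdot a_{j-i}\le (B-1)^2$ together with the exact number of summands that appear in the convolution sums of Lemma~\ref{lemma_mult_last_digits}. The lower bounds cost nothing: $c_{-1}=0$, and by induction each $c_j$ arises by applying $(\,\cdot\,)\operatorname{div}B$ to a non-negative integer (a sum of products of digits plus the previous, already non-negative, carry), so $c_j\ge 0$ throughout both ranges.

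For \eqref{eq_c_ub_1} I would induct on $j$ over $0\le j\le n-1$. The base case $j=0$ is direct: $c_0=a_0^2\operatorname{div}B\le (B-1)^2\operatorname{div}B=B-2$, using $(B-1)^2=(B-2)B+1$, and $B-2$ is exactly the right-hand side of \eqref{eq_c_ub_1} at $j=0$. For the step, with $1\le j\le n-1$, the sum $\sum_{i=0}^{j}a_i a_{j-i}$ has $j+1$ terms, each at most $(B-1)^2$, and the induction hypothesis gives $c_{j-1}\le j(B-1)-1$, whence
$$
  c_{j-1}+\sum_{i=0}^{j}a_i a_{j-i}\ \le\ j(B-1)-1+(j+1)(B-1)^2\ =\ \big((j+1)(B-1)-1\big)B ;
$$
the final equality is an exact identity (expand and cancel in $d=B-1$), and applying $\operatorname{div}B$ then gives $c_j\le (j+1)(B-1)-1$.

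For \eqref{eq_c_ub_2} I would induct on $j$ from $1$ to $n-1$, taking \eqref{eq_c_ub_1} at index $n-1$, i.e.\ the (weakened) bound $c_{n-1}\le n(B-1)$, as the seed. In the step for a fixed $j$, Lemma~\ref{lemma_mult_last_digits} tells us that $\sum_{i=j}^{n-1}a_i a_{n-1+j-i}$ has exactly $n-j$ terms, each at most $(B-1)^2$, while the seed (for $j=1$) or the induction hypothesis (for $j\ge 2$) yields $c_{n-2+j}\le (n-j+1)(B-1)$; abbreviating $p:=n-j\ge 1$,
$$
  c_{n-2+j}+\sum_{i=j}^{n-1}a_i a_{n-1+j-i}\ \le\ (p+1)(B-1)+p(B-1)^2\ =\ \big(p(B-1)+1\big)B-1 ,
$$
again an exact identity, so that $\operatorname{div}B$ gives $c_{n-1+j}\le p(B-1)=(n-j)(B-1)$, which closes the induction.

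I do not expect a real obstacle here: the entire content is the observation that the carry leaving a column exceeds the carry entering it by at most one further multiple of $B-1$, and the two displayed exact identities are precisely what pins that down. The only points that need care are bookkeeping ones --- reading off the correct number of convolution summands in each of the two regimes from Lemma~\ref{lemma_mult_last_digits}, and handling $j=0$ in \eqref{eq_c_ub_1} as a genuine separate base case, since the would-be uniform step there relies on the sum consisting of the single term $a_0^2$ rather than on the (false) inequality $c_{-1}\le -1$.
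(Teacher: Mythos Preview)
Your proof is correct and follows essentially the same approach as the paper: induction on the column index using the crude bound $a_i a_{j-i}\le(B-1)^2$ together with the number of summands in each regime. You are simply more explicit than the paper, which writes out only the floor computation for \eqref{eq_c_ub_1} and dismisses \eqref{eq_c_ub_2} as analogous; your exact algebraic identities make the floor steps transparent.
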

\begin{proof}
  For the first part we consider
  $$
    0\le \sum_{i=0}^j a_i\cdot a_{j-i}\le (j+1)(B-1)^2,
  $$
  so that, obviously, $c_j\ge 0$ for all $0\le j\le n-1$ and 
  $$
    c_0=\left\lfloor\left(c_{j-1}+\sum_{i=0}^j a_i\cdot a_{j-i}\right) /B\right\rfloor \le \left\lfloor(B-1)^2 /B\right\rfloor =B-2.
  $$
  Using induction we compute
  $$
    c_j=\left\lfloor\left(c_{j-1}+\sum_{i=0}^j a_i\cdot a_{j-i}\right) /B\right\rfloor \le \left\lfloor \left((j+1)(B-1)^2+jB-j-1\right)  /B\right\rfloor =(j+1)B-(j+2).
  $$
  
  The second part can be treated analogously.
\end{proof}

We remark that $b_{2n-1}\le B-1$ can be concluded from Equation~(\ref{eq_c_ub_2}), reflecting the fact that squaring an $n$-digit number gives a number with at most $2n$ digits.

Assume that you want to compute just the $189$th digit of the square of an $100$-digit number. The previous two lemmas can be used to accomplish this task without 
computing the last $190$ digits. More generally, let us assume that we want to compute $b_j$. To end we start by bounding the carryover $c_{j-\alpha}$, for some suitably 
large value of $\alpha$, by $\underline{c}_{j-\alpha}\le c_{j-\alpha}\le \overline{c}_{j-\alpha}$. Here we can choose $\underline{c}_{j-\alpha}=0$ and take 
$\overline{c}_{j-\alpha}$ according to Lemma~\ref{lemma_carryover_bounds}. Starting from there we can compute the values $\underline{b}_i$, $\underline{c}_i$, 
$\overline{b}_i$, $\overline{c}_i$ for all $j-\alpha+1\le i\le j$ according to the formulas in Lemma~\ref{lemma_mult_last_digits}. We can easily check that
\begin{equation}
  \overline{c}_i-\underline{c}_i \le \left\lceil\frac{\overline{c}_{i-1}-\underline{c}_{i-1}}{B}\right\rceil,   
\end{equation}
i.e., the estimates for the carryovers $c_i$ are getting better and better. Once we reach $\overline{c}_{i^\star}=\underline{c}_{i^\star}$, we have $\underline{b}_i=b_i=\overline{b}_i$ 
for all indices $i>i^\star$. While the differences decrease logarithmically in the first steps, they can nevertheless stay at $1$ for a long time.  As an example consider the computation 
of the digit $b_{24}$ in 
$$
  123123999999999999^2=15159519375999999753752000000000001,
$$
where the root consists of $n=18$ digits. Choosing $\alpha=7$, we set $\underline{c}_{17}=0$ and $\overline{c}_{17}=18\cdot 9-1=161$. With this, we compute:
\begin{center}
\begin{tabular}{rrrrrrrrrrr}
\hline
$n$ & 18 & 19 & 20 & 21 & 22 & 23 & 24 & 25 & 26 & 27 \\
\hline 
$\underline{b}_{n}$ & 1 & 2 & 9 & 9 & 9 & 9 & 5 & 7 & 3 & 9 \\ 
$\overline{b}_{n}$  & 2 & 8 & 0 & 0 & 0 & 0 & 6 & 7 & 3 & 9 \\
\hline
$\underline{c}_{n}$ & 62 & 60 & 51 & 42 & 33 & 24 & 19 & 15 & 13 & 8 \\ 
$\overline{c}_{n}$  & 78 & 61 & 52 & 43 & 34 & 25 & 19 & 15 & 13 & 8 \\
\hline
\end{tabular}
\end{center}
So, even choosing the relatively large value $\alpha=7$, we only get the information $b_{24}\in\{5,6\}$. Of course, the long sequence of digits $B-1$ in the square causes this effect. 
However, all subsequent digits are computed correctly (and a much smaller value of $\alpha$ would be sufficient to this end).

If $n\le j<2n$, then for the computation of $b_j$ choosing $\alpha=\left\lceil 1+\log_B(2n-j+1)\right\rceil$ is sufficient in most cases, so that 
$O((2n-j)\log_B(2n-j+1))$ additions and multiplications for single digits are needed. If $j$ is close to $2n$, then this significantly improves upon the running time 
$O(n\log n\log\log n)$ of the Sch\"onhage-Strassen algorithm for the multiplication of two $n$-digit numbers.  

However, we will use the recursion from Lemma~\ref{lemma_mult_last_digits} mainly in the case when $1\le j\le n-1$. If we assume that $c_{j-1}$ is stored, then we can compute 
$b_j$ and $c_j$ in $O(j)$ additions and multiplications, which saves us factor of $\log j\log \log j$ at the very least. This can be used if we build up a list of $n$-digit numbers 
whose last $n$ digits of the square are contained in some set $M$ from $(n-1)$-digit numbers whose last $n-1$ digits of the square are contained in $M$, see Section~\ref{sec_search_algorithms}.  

\subsection{Infinite patterns}
\label{sec_infinite_patterns} 

In Equation~(\ref{ex_infinite_pattern}) we have given an example of a $1$-parametric family of integers whose square consists of the decimal digits $2$, $5$, and $7$ only. Later on 
we will see examples with more than one parameter. So, by an \emph{infinite pattern} we may understand a function $f\colon\mathbb{N}^k\to\mathbb{N}$ such that $f(x)^2$ consists of 
at most three different digits in a given base $B$ for all $x\in\mathbb{N}^k$ or a suitable infinite subset of $\mathbb{N}^k$. However, we are not interested in functions $f$ repeating 
the very same values again and again. Also defining $f(n)$ as the $n$th number whose square consists of (at most) three different digits is not what we have in mind. So,  
we are not able to give a precise definition of an {\lq\lq}infinite pattern{\rq\rq} and leave its meaning at the intuitive level and state examples of certain more structured types in the 
subsequent subsections. In Subsection~\ref{subsec_infinite_non_zero} we consider integers $N$ consisting of some fix prefix, an increasing chain of digits all equal to a fix number $x\neq 0$, 
and a fix suffix. In Equation~(\ref{ex_infinite_pattern}) the prefix is $1$, the repeating digit is $x=6$, and the suffix is $5$. The special case where the repeating digit equals zero 
is treated in Subsection~\ref{subsec_infinite_zero-blocks}. Here we can also have several different blocks of increasing sequences of zeroes, of the same or different lengths. In the 
latter case we obtain infinite families with more than one parameter. There are some constructions where a certain pattern is repeated by not just copying but in a more intricate way, 
see Subsection~\ref{subsec_intricate_patterns}. In Subsection~\ref{subsec_summary_infinite_patterns} we summarize our findings and refer to the respective general results from the previous 
subsections for all known and newly discovered infinite patterns for base $B=10$ mentioned at \url{http://www.worldofnumbers.com/threedigits.htm}. 

Numbers $N$ such that $N^2$ contains at most three digits in a given base $B$ and that do not belong to one of the (known) infinite patterns are called \emph{sporadic solutions}. 
Of course, all $1\le N< \sqrt{B^3}$ result in numbers that have at most $3$ digits in total. These will also be called \emph{trivial solutions}. For $B\in\{2,3\}$ the square $N^2$ 
of every positive integer $N$ consists of at most three different digits in base $B$ representation. Here we will also speak of trivial solutions and will mainly focus on $B\ge 4$ 
in the following. For sporadic solutions with many digits we refer to Subsection~\ref{subsec_sporadic_solutions}.

\subsubsection{Infinite patterns with sequences of non-zero digits}
\label{subsec_infinite_non_zero}
In order to explain our first type of an infinite pattern we first provide a few auxiliary results.
\begin{lemma}
  \label{lemma_ending_pattern_bbb}
  Let $B\in\mathbb{N}_{\ge 2}$ and $N$ be a positive integer whose last three digits end with the same integer $0\le x\le B-1$ in base $B$ representation, i.e., 
  $a_0=a_1=a_2=x$ using the notation from Lemma~\ref{lemma_mult_last_digits}. If the last three digits of 
  $N^2$ are identical in base $B$, i.e.\ $b_0=b_1=b_2$, then $x=0$.
\end{lemma}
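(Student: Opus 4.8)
The plan is to make the recursion from Lemma~\ref{lemma_mult_last_digits} fully explicit for a suffix $a_0=a_1=a_2=x$, and then show that the three conditions $b_0=b_1$, $b_1=b_2$ are incompatible unless $x=0$. Substituting into the formulas gives $b_0 = x^2\operatorname{rem}B$, $c_0 = x^2\operatorname{div}B$, then $b_1 = (c_0+2x^2)\operatorname{rem}B$, $c_1 = (c_0+2x^2)\operatorname{div}B$, and finally $b_2 = (c_1+3x^2)\operatorname{rem}B$. Besides these formulas I only need the trivial identity $x^2 = c_0 B + b_0$ and the carryover bound $0\le c_0\le B-2$ from Lemma~\ref{lemma_carryover_bounds}.

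First I would use $b_0=b_1$. Reducing modulo $B$ and recalling $x^2\equiv b_0\pmod B$, this says $c_0\equiv -b_0\pmod B$, i.e.\ $c_0+b_0\equiv 0\pmod B$. Since $0\le c_0\le B-2$ and $0\le b_0\le B-1$, the integer $c_0+b_0$ lies in $[0,2B-3]$, hence equals $0$ or $B$. If $c_0+b_0=0$, then $c_0=b_0=0$, so $x^2=0$ and $x=0$, and we are done. Otherwise $c_0 = B-b_0$ with $2\le b_0\le B-1$, and therefore $x^2 = c_0B+b_0 = B^2 - b_0(B-1)$.

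In the remaining case I would bring in the third digit. A direct computation gives $c_0+2x^2 = 2B^2+B+b_0(1-2B)$, which re-confirms $b_1\equiv b_0\pmod B$ and yields $c_1 = 2B+1-2b_0$. Then $c_1+3x^2 = 3B^2+2B+1+b_0(1-3B)$, so $b_2\equiv b_0+1\pmod B$. Since $0\le b_1,b_2\le B-1$, the equality $b_2=b_1=b_0$ would force $b_0+1\equiv b_0\pmod B$, i.e.\ $B\mid 1$, which is impossible for $B\ge 2$. Hence the remaining case does not occur, so $x=0$, proving the lemma.

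The whole argument is elementary modular arithmetic; the only place that demands attention is the carryover bookkeeping, in particular computing $c_1$ correctly (one may cross-check it against the bound $c_1\le 2(B-1)-1$ from Lemma~\ref{lemma_carryover_bounds}, which incidentally re-derives $b_0\ge 2$). I expect no real obstacle beyond keeping the single case split clean.
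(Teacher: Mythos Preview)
Your proof is correct and follows essentially the same line as the paper's: writing $x^2=c_0B+b_0$, using $b_0=b_1$ to force $c_0+b_0\in\{0,B\}$, and then in the nontrivial case deriving $b_2\equiv b_0+1\pmod B$ to reach a contradiction. The only cosmetic differences are that the paper writes $a,b$ for your $c_0,b_0$ and phrases the final step as $c_2B+b_2=(3a+2)B+(b+1)$ rather than reducing modulo $B$.
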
   
\begin{proof}
  We uniquely write $x^2=aB+b$ with integers $0\le a,b\le B-1$, so that the last digit of $N^2$ equals $b$ and the carryover is $a$. Next we compute
  $$
    2x^2+a=2aB+(2b+a),  
  $$
  so that using Lemma~\ref{lemma_mult_last_digits} and its notation yields $b_1\equiv 2b+a \pmod B$. From $b_1=b$ and $0\le a+b\le 2B-2$ we then conclude either $a=b=0$, so that 
  $x=0$, or
  \begin{equation} 
    a+b=B.
  \end{equation}    
  In the following we only consider the latter case, where $c_1=2a+1$. With this we compute
  $$
    c_2B+b_2 \overset{!}{=} 3x^2+2a+1=3aB+3b+2a+1=(3a+2)B+(b+1).
  $$
  Since $b_2=b$, this is impossible.
\end{proof}
Note that for $B\ge 5$ it is possible that the last two digits of $N$ in base $B$ are equal to $1\le x\le B-1$ and also the last two digits of $N^2$ in base $B$ coincide. 
A parametric example is given by $x=B-2$ where $N^2$ ends with two $4$s, e.g., for $B=10$ we have $[88]^2=[7744]$. 
\begin{lemma}
  \label{lemma_ending_pattern_bbcc}
  Let $B\in\mathbb{N}_{\ge 2}$ and $N$ be a positive integer whose last four digits end with the same integer $0\le x\le B-1$ in base $B$ representation, i.e., $a_0=a_1
  =a_2=a_3=x$. If the last two digits of $N^2$ are identical in base $B$ and the last but two's digit equals the last but three's digit, i.e.\ $b_0=b_1$ and $b_2=b_3$, then $x=0$.
\end{lemma}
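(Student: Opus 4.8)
The plan is to follow the same computational template as the proof of Lemma~\ref{lemma_ending_pattern_bbb}, now carrying the long-multiplication recursion of Lemma~\ref{lemma_mult_last_digits} two steps further. Write $x^2=aB+b$ with integers $0\le a,b\le B-1$, so that $b_0=b$ and $c_0=a$. As before, $2x^2+a=2aB+(2b+a)$ gives $b_1\equiv 2b+a\pmod B$; combined with $b_1=b_0=b$ and $0\le a+b\le 2B-2$ this forces either $a=b=0$ (hence $x=0$, and we are done) or $a+b=B$. From now on I assume $a+b=B$, which yields the carryover $c_1=2a+1$.

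Next I would compute $b_2,b_3$ and the intermediate carryover $c_2$. Using $\sum_{i=0}^2 a_ia_{2-i}=3x^2$ and $\sum_{i=0}^3 a_ia_{3-i}=4x^2$ and substituting $a+b=B$ repeatedly, one obtains
\[
  c_1+3x^2=(3a+2)B+(b+1),\qquad c_2+4x^2=(4a+3)B+(b+2+\varepsilon),
\]
where $\varepsilon:=(b+1)\operatorname{div}B\in\{0,1\}$, with $\varepsilon=1$ precisely when $b=B-1$. Hence $b_2=(b+1)\operatorname{rem}B$ and $b_3=(b+2+\varepsilon)\operatorname{rem}B$.

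It then remains to check that the hypothesis $b_2=b_3$ is untenable. If $b\le B-3$ then $b_2=b+1$ and $b_3=b+2$, which differ. If $b=B-2$ then $b_2=B-1$ and $b_3=0$, which differ since $B\ge 2$. In the remaining case $b=B-1$ we have $b_2=0$, while $a+b=B$ forces $a=1$ and $x^2=2B-1$; this equation has no solution for $B\le 4$, so $B\ge 5$ and $b_3=(B+2)\operatorname{rem}B=2\neq 0=b_2$. In every case $b_2\neq b_3$, a contradiction, so $x=0$.

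I expect the only slightly delicate point to be the carry bookkeeping at the boundary $b=B-1$, where the identity for $c_kB+b_k$ picks up one extra unit of carry; isolating this in the indicator $\varepsilon$ and noting that $x^2=2B-1$ excludes all small bases keeps the case analysis short. Everything else reuses the elementary substitution $a+b=B$ already exploited in Lemma~\ref{lemma_ending_pattern_bbb}.
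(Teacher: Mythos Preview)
Your proof is correct and follows essentially the same route as the paper's: both carry the long-multiplication recursion of Lemma~\ref{lemma_mult_last_digits} through to $b_3$, use $a+b=B$ to simplify, and split on whether $b+1$ overflows. The only cosmetic difference is that the paper disposes of $B=2$ at the outset, whereas you handle the boundary case $b=B-1$ by observing that $x^2=2B-1$ forces $B\ge 5$; your use of the indicator $\varepsilon$ to package the carry is a neat touch that makes the case split slightly cleaner.
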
 
\begin{proof}
  For $B=2$ either $x=0$ or $x=1$. In the latter case $\big([11]_2\big)^2$ ends with $[01]_2$, so that we can assume $B\ge 3$ in the following. As in 
  the proof of Lemma~\ref{lemma_ending_pattern_bbb} we uniquely write $x^2=aB+b$ and conclude $a+b=B$ if $x\neq 0$. From 
  $$
    c_2B+b_2 \overset{!}{=} 3x^2+2a+1=3aB+3b+2a+1=(3a+2)B+(b+1).
  $$
  we conclude either $b_2=0$, $c_2=3a+3$, $b=B-1$, $a=1$ or $b_2=b+1$, $c_2=3a+2$, $b\le B-2$. In the first case we compute
  $$
    c_3B+b_3 \overset{!}{=} 4x^2+3a+3=4aB+4b+3a+3=(4a+3)B+(b+3)=(4a+4)B+2.
  $$ 
  From $b_3=b_2=0$ we conclude $B=2$, which is a contradiction. Thus we consider the second case, where
  $$
    c_3B+b_3 \overset{!}{=} 4x^2+3a+2=4aB+4b+3a+2=(4a+3)B+(b+2).
  $$
  Since $b_3=b+1$, this yields a contradiction again.
\end{proof}

So, if $\big([\underset{n\text{ times}}{\underbrace{x,\dots,x}}]_B\big)^2$, where $1\le x\le B-1$ and $n\ge 4$ should contain three subsequent identical digits within its last four digits, 
then the pattern is of the form $[cccb]_B$, where $c\neq b$. It is indeed possible for general $n$ that the last $n$ digits except the last one are identical:
\begin{lemma}
  \label{lemma_all_digit_equal_x_squared}
  Let $B\in\mathbb{N}_{\ge 2}$ and $1\le x\le B-1$ be an integer such that $x^2=(B-1-b)\cdot B+b$ for a suitable integer $b$ satisfying $1\le b\le B-1$. Then, we have
  $$
    \big([\underset{n\text{ times}}{\underbrace{x,\dots,x}}]_B\big)^2=[\underset{n-1\text{ times}}{\underbrace{B-b,\dots,B-b}}\,,B-1-b,\, 
    \underset{n-1\text{ times}}{\underbrace{b-1,\dots,b-1}}\,,b ]_B
  $$
  for each $n\in\mathbb{N}_{>0}$.
\end{lemma}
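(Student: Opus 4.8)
The plan is to sidestep the digit recursion of Lemma~\ref{lemma_mult_last_digits} and instead argue by one closed-form computation followed by a single base-$B$ subtraction. The first step is to repackage the hypothesis: $x^2=(B-1-b)B+b$ is the same as the clean identity $x^2=(B-1)(B-b)$, and this factorisation is what drives the whole argument.

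Next I would put $N:=[\underbrace{x,\dots,x}_{n}]_B=x\cdot\dfrac{B^n-1}{B-1}$ and compute, using $\dfrac{B^n-1}{B-1}=\sum_{i=0}^{n-1}B^i$,
$$
  N^2=x^2\left(\frac{B^n-1}{B-1}\right)^2=(B-1)(B-b)\cdot\frac{(B^n-1)^2}{(B-1)^2}=(B-b)(B^n-1)\sum_{i=0}^{n-1}B^i .
$$
Expanding the product of $B^n-1$ with the geometric sum gives the convenient form
$$
  N^2=(B-b)\left(\sum_{i=n}^{2n-1}B^i-\sum_{i=0}^{n-1}B^i\right).
$$

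Then I would read this off as a subtraction in base $B$. The minuend $(B-b)\sum_{i=n}^{2n-1}B^i$ is the integer with digit $B-b$ in each of the positions $n,\dots,2n-1$ and digit $0$ in positions $0,\dots,n-1$, while the subtrahend $(B-b)\sum_{i=0}^{n-1}B^i$ has digit $B-b$ in positions $0,\dots,n-1$. Carrying out the subtraction with borrows from position $0$ upward: position $0$ yields $B-(B-b)=b$ and passes a borrow on; positions $1,\dots,n-1$ each yield $B-1-(B-b)=b-1$ and pass a borrow on; position $n$ yields $(B-b)-1=B-1-b$ and absorbs the borrow, where the inequality $b\le B-1$ is exactly what keeps $B-1-b\ge 0$; positions $n+1,\dots,2n-1$ remain at $B-b$. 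This is precisely the claimed digit string, and the degenerate case $n=1$ is just the hypothesis $x^2=(B-1-b)B+b$ read as a two-digit number. Finally, since $1\le b\le B-1$, each of $B-b$, $B-1-b$, $b-1$, $b$ lies in $\{0,\dots,B-1\}$ and the leading digit $B-b$ is nonzero, so the string is a legitimate base-$B$ representation.

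There is no genuine obstacle here; the only step demanding care is the borrow bookkeeping, and even that can be replaced by a purely algebraic check: collect the claimed value as $b+(b-1)\sum_{i=1}^{n-1}B^i+(B-1-b)B^n+(B-b)B^n\sum_{i=1}^{n-1}B^i$ and verify it equals $N^2$ using $(B-1)\sum_{i=1}^{n-1}B^i=B^n-B$. An induction on $n$ via the carryover recursion of Lemma~\ref{lemma_mult_last_digits} also works, but it needs the carryover tracked through three separate regimes and is strictly longer.
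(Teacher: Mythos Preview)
Your proof is correct and follows essentially the same route as the paper: both rewrite the hypothesis as $x^2=(B-1)(B-b)$, set $N=x\cdot\frac{B^n-1}{B-1}$, and reduce $N^2$ to $(B-b)\cdot\frac{(B^n-1)^2}{B-1}$. The only minor difference is in the final verification---the paper evaluates the claimed digit string directly and shows it equals this quantity, while you instead read off the digits via a base-$B$ subtraction with borrows (and then mention the paper's direct evaluation as an alternative); your additional check that all digits lie in $\{0,\dots,B-1\}$ with nonzero leading digit is a nice touch the paper leaves implicit.
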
 
\begin{proof}
  First we note that $x^2=(B-1-b)\cdot B+b=(B-1)(B-b)$.
  The integer $N$ that is squared on the left hand side is given by
  \begin{eqnarray*}
    N&=&\sum_{i=0}^{n-1} xB^i=x\cdot\frac{B^n-1}{B-1},
  \end{eqnarray*}
  so that
  \begin{eqnarray*}
    N^2 &=& x^2\cdot\left(\frac{B^n-1}{B-1}\right)^2 
    =(B-b)\cdot \frac{B^{2n}-2B^n+1}{B-1}.
  \end{eqnarray*}  
  The integer $M$ representing the ride hand side is given by
  \begin{eqnarray*}
    M&=&b\,+\,\sum_{i=1}^{n-1} (b-1)B^i\,+\, (B-1-b)B^n\,+\,\sum_{i=n+1}^{2n-1} (B-b)B^i \\ 
    &=& b+(b-1)B\cdot\frac{B^{n-1}-1}{B-1} +(B-1-b)B^n+(B-b)B^{n+1}\cdot \frac{B^{n-1}-1}{B-1} \\ 
    &=& b+b\cdot\frac{B^{n}-B}{B-1}-\frac{B^{n}-B}{B-1}-B^n + (B-b)B^n+ (B-b)\cdot \frac{B^{2n}-B^{n+1}}{B-1} \\ 
    &=& b\cdot \frac{B^n-1}{B-1}- B\cdot \frac{B^n-1}{B-1} + (B-b)\cdot \frac{B^{2n}-B^{n}}{B-1} \\ 
    &=& (B-b)\cdot \frac{B^{2n}-2B^n+1}{B-1},
  \end{eqnarray*}
  i.e., $M=N^2$. 
\end{proof}

Next we will see that the condition on the representation of $x^2$ is not at as special as it looks at first sight.
\begin{lemma}
  \label{lemma_all_digit_equal_x_squared_condition}
  Let $B\in\mathbb{N}_{\ge 2}$ and $1\le x\le B-1$ be an integer such that the last $n$ digits in base-$B$-representation of 
  $$
    \big([\underset{n\text{ times}}{\underbrace{x\dots x}}]_B\big)^2
  $$
  are given by
  $$
    [\underset{n-1\text{ times}}{\underbrace{c\dots c}}\,b]_B,
  $$
  where $b,c\in\{0,\dots,B-1\}$ are arbitrary but fixed, for all $n\in\{1,2,3\}$. 
  Then, $B-1$ divides $x^2$, $b=x^2\operatorname{rem} B$, $c=b-1$, $x^2\operatorname{div}B=B-1-b$, and $b\ge 1$.
\end{lemma}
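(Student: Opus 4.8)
\emph{Proof plan.} The strategy is to unwind the hypothesis by the long-multiplication recursion of Lemma~\ref{lemma_mult_last_digits}, in the spirit of the proofs of Lemma~\ref{lemma_ending_pattern_bbb} and Lemma~\ref{lemma_ending_pattern_bbcc}. Since $N^2\bmod B^j$ depends only on $N\bmod B^j$, the whole hypothesis is already contained in the case $n=3$: denoting the last three digits of $\big([xxx]_B\big)^2$ by $b_2,b_1,b_0$ as in Lemma~\ref{lemma_mult_last_digits}, we have $b_0=x^2\operatorname{rem}B$, so the assumption forces $b=x^2\operatorname{rem}B$, while the required suffix $[ccb]_B$ forces $b_1=b_2=c$. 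The cases $n=1,2$ add nothing further, since $[xx]_B$ and $[xxx]_B$ are congruent modulo $B^2$. Hence it suffices to take $N=[xxx]_B$ and exploit $b_0=b$ and $b_1=b_2=c$; in particular $b_1=b_2$.

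First I would write $x^2=aB+b$ with $0\le a,b\le B-1$, so that $a=x^2\operatorname{div}B$, and run the recursion of Lemma~\ref{lemma_mult_last_digits} with $a_0=a_1=a_2=x$. A short computation gives $b_0=b$, $c_0=a$, and then
$$
  b_1=(a+2b)\operatorname{rem}B,\qquad c_1=2a+q,\qquad b_2=(c_1+3b)\operatorname{rem}B,
$$
where $q:=\lfloor(a+2b)/B\rfloor\in\{0,1,2\}$ because $0\le a+2b\le 3(B-1)$. As $b_1,b_2\in\{0,\dots,B-1\}$, the equality $b_1=b_2$ is equivalent to $b_1\equiv b_2\pmod B$, which after simplification collapses to the single congruence
$$
  a+b+q\equiv 0\pmod B.
$$
Since $0\le a+b+q\le 2B$, this forces $a+b+q\in\{0,B,2B\}$.

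The two extreme values are excluded by $1\le x\le B-1$: the value $0$ gives $a=b=q=0$, hence $x^2=0$; the value $2B$ forces $a=b=B-1$ and $q=2$, hence $x^2=B^2-1>(B-1)^2$; both are impossible. So $a+b+q=B$, i.e.\ $q=B-a-b$. The one genuinely delicate point is to substitute this back into $q=\lfloor(a+2b)/B\rfloor=\big\lfloor\big((B-q)+b\big)/B\big\rfloor$ and to rule out $q=0$ and $q=2$: if $q=0$ then $a+b=B$ and $\lfloor(a+2b)/B\rfloor=\lfloor(B+b)/B\rfloor=1\neq 0$; if $q=2$ then $a+b=B-2$ and $\lfloor(a+2b)/B\rfloor=\lfloor(B-2+b)/B\rfloor\in\{0,1\}\neq 2$. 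Hence $q=1$, which in turn needs $b\ge 1$ to be consistent with $\lfloor(B-1+b)/B\rfloor=1$, and $a+b=B-1$.

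From $a+b=B-1$ one reads off $x^2=aB+b=(a+1)(B-1)$, so $B-1\mid x^2$ and $x^2\operatorname{div}B=a=B-1-b$, while $b=x^2\operatorname{rem}B$ and $b\ge 1$ are already in hand. Finally $c=b_1=a+2b-qB=(B-1-b)+2b-B=b-1$, which exhausts the list of asserted conclusions. The only real work I anticipate is the closing case distinction on the carry $q$ together with keeping all intermediate carries within their natural ranges; everything else is routine arithmetic.
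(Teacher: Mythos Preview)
Your proof is correct and follows essentially the same route as the paper: write $x^2=aB+b$, run the digit/carry recursion for $[xxx]_B$, and do a three-way case split to force $a+b=B-1$, whence all conclusions follow. The only noteworthy difference is cosmetic: the paper splits on $a+2b\in\{c,c+B,c+2B\}$ and afterwards excludes $b=0$ by noting $(B-1)^2<B^2-B<B^2$, whereas you split on the carry $q=\lfloor(a+2b)/B\rfloor\in\{0,1,2\}$ and obtain $b\ge 1$ directly from the self-consistency condition $1=\lfloor(B-1+b)/B\rfloor$; your packaging is arguably a bit cleaner, but the substance is identical.
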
 
\begin{proof}
  We set $b:=x^2\operatorname{rem} B$ and $a:=x^2\operatorname{div}B$, so that $a,b\in\{0,\dots,B-1\}$ and
  \begin{equation}
    \label{eq_lip_1}
    x^2 = a\cdot B+b,
  \end{equation}
  i.e., we apply the condition on the shape of the digits for $n=1$. Observe that $a=b=B-1$ is impossible since $x^2=(B-1)B+(B-1)=B^2-1$ has no integral solution $x$ for $B\ge 2$. 
  Thus, we have
  $$
    0\le a+b\le B-3.
  $$ 
  Next we apply the condition for $n=2$ and $n=3$. 
  Using Lemma~\ref{lemma_mult_last_digits}, including the notation, we have $b_0=b$, 
  $b_1=b_2=c$, 
  $c_0=a$ and we conclude
  \begin{eqnarray}
    2x^2B+x^2 &=& 2aB^2+(a+2b)B+b \label{eq_lip_5}, \\ 
    3x^2B^2+2x^2B+x^2 &=& 3aB^3+(2a+3b)B^2+(a+2b)B+b \label{eq_lip_6}. 
  \end{eqnarray}
  From Equation~(\ref{eq_lip_5}) and $b_1=c$ we conclude $a+2b\equiv c\pmod B$. Since $0\le a+2b\le 3B-3<3B$ and $0\le c\le B-1$ we have 
  $a+2b\in\{c,c+B,c+2B\}$.
  \begin{itemize}
    \item If $a+2b=c$, then Equation~(\ref{eq_lip_6}) and $b_2=c$ gives $2a+3b\equiv c\pmod B$. From $a+2b=c$ we conclude $2a+3b=c+(a+b)\le c+(a+2b)\le c+c\le 
          c+(B-1)<c+B$, so that $2a+3b=c$ due to $2a+3b=c+(a+b)\ge c$. Subtracting the equations $a+2b=c$ and $2a+3b=c$ yields $a+b=0$, so that $a,b\ge 0$ gives 
          $a=b=0$. However, then $x^2=aB+b=0$ then implies $x=0$, which contradicts our assumption $x\ge 1$.
    \item If $a+2b=c+2B$, then Equation~(\ref{eq_lip_6}) and $b_2=c$ gives $2a+3b+2\equiv c\pmod B$. Since $2a+3b+2=(a+2b)+(a+b+2)=c+2B+(a+b+2)$ we conclude 
          $a+b+2\equiv 0\pmod B$. From $0<a+b+2\le 2B-1<2B$ we conclude $a+b+2=B$, so that $a+2b=c+2B$ yields $b+B-2=c+2B$, which is equivalent to $b-2=c+B$.  
          Since the left hand side is strictly smaller than $B$ and the right hand side is at least $B$, we obtain a contradiction.   
  \end{itemize}
  Thus, we have
  \begin{equation}
    a+2b=c+B \label{eq_lip_8}.
  \end{equation}
  Equation~(\ref{eq_lip_6}) and $b_2=c$ gives $2a+3b+1\equiv c\pmod B$. Since $2a+3b+1=(a+2b)+(a+b+1)=c+2B+(a+b+1)$ we conclude 
  $a+b+1\equiv 0\pmod B$. Since $0<a+b+1\le 2B-2<2B$, we have $a+b=B-1$, so that Equation~(\ref{eq_lip_8}) yields
  $$
    c=b-1\quad\text{and}\quad x^2\operatorname{div}B=a=B-1-b.  
  $$
  With this we have 
  $$
    x^2=aB+b=(B-1-b)B+b=(B-1)(B-b),
  $$
  so that $x^2$ is divisible by $B-1$. Finally, it remains to observe that if $b=0$, then $a=B-1$, $x^2=aB+b=B^2-B$ and 
  $$
    (B-1)^2=B^2-2B+1\overset{B\ge 2} < x^2 < B^2,
  $$
  which shows that no such integer $x$ exists.
\end{proof}

\begin{corollary}
  \label{cor_sequence_equal_digit}
  Let $B\in\mathbb{N}_{\ge 2}$, $n\in\mathbb{N}_{\ge 3}$, and $1\le x\le B-1$ be an integer such that the last but one and the last but second digit of 
  $$
    \big([\underset{n\text{ times}}{\underbrace{x\dots x}}]_B\big)^2
  $$
  coincide, then $B-1$ divides $x^2$ and the last $n$ digits of the square are given by 
  $$
    [\underset{n-1\text{ times}}{\underbrace{c\dots c}}\,b]_B,
  $$
  where $b=x^2\operatorname{rem} B$, $c=b-1$, $x^2\operatorname{div}B=B-1-b$, and $1\le b\le B-1$.
\end{corollary}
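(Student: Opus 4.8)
The plan is to derive the corollary as an immediate consequence of Lemmas~\ref{lemma_all_digit_equal_x_squared} and~\ref{lemma_all_digit_equal_x_squared_condition}; the only genuine step is a bookkeeping observation that converts the single hypothesis, stated for one fixed $n\ge 3$, into the three-fold hypothesis required by Lemma~\ref{lemma_all_digit_equal_x_squared_condition}.

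First I would observe that all digits of $[\underset{n\text{ times}}{\underbrace{x,\dots,x}}]_B$ equal $x$, so by inspection of the recursion in Lemma~\ref{lemma_mult_last_digits} the digit $b_j$ and the carry $c_j$ are, for $0\le j\le n-1$, determined by $a_0,\dots,a_j$ alone, hence by $x$ and $B$ alone; in particular $b_0,b_1,b_2$ (together with $c_0,c_1$) do not depend on $n$ once $n\ge 3$. Put $b:=x^2\operatorname{rem}B=b_0$ and $c:=b_1$. The hypothesis that the last but one and the last but second digit of $\big([\underset{n\text{ times}}{\underbrace{x,\dots,x}}]_B\big)^2$ coincide says exactly $b_1=b_2$, so $b_2=c$ as well; combined with the independence just noted, for every $m\in\{1,2,3\}$ the last $m$ digits of $\big([\underset{m\text{ times}}{\underbrace{x,\dots,x}}]_B\big)^2$ are $[\underset{m-1\text{ times}}{\underbrace{c,\dots,c}}\,,b]_B$. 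This is precisely the hypothesis of Lemma~\ref{lemma_all_digit_equal_x_squared_condition}, which then yields that $B-1$ divides $x^2$, that $b=x^2\operatorname{rem}B$, $c=b-1$, $x^2\operatorname{div}B=B-1-b$, and $b\ge 1$; since $b$ is a base-$B$ digit we also have $b\le B-1$, hence $1\le b\le B-1$.

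Finally, the identity $x^2=(B-1-b)\cdot B+b$ with $1\le b\le B-1$ is exactly the hypothesis of Lemma~\ref{lemma_all_digit_equal_x_squared}, which therefore gives the full base-$B$ expansion
$$
  \big([\underset{n\text{ times}}{\underbrace{x,\dots,x}}]_B\big)^2=[\underset{n-1\text{ times}}{\underbrace{B-b,\dots,B-b}}\,,B-1-b,\,\underset{n-1\text{ times}}{\underbrace{b-1,\dots,b-1}}\,,b]_B
$$
of a $2n$-digit number whose digits in positions $0,\dots,n-1$ are $b$ in position $0$ and $b-1=c$ in positions $1,\dots,n-1$; hence the last $n$ digits of the square are $[\underset{n-1\text{ times}}{\underbrace{c,\dots,c}}\,,b]_B$, as claimed. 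I do not expect a real obstacle; the only point needing care is the ``independence of $n$'' observation, which is what lets the one-value hypothesis feed into the $n\in\{1,2,3\}$ hypothesis of Lemma~\ref{lemma_all_digit_equal_x_squared_condition}.
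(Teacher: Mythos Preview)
Your proposal is correct and follows exactly the route the paper intends: the corollary is stated without its own proof, immediately after Lemmas~\ref{lemma_all_digit_equal_x_squared} and~\ref{lemma_all_digit_equal_x_squared_condition}, as a direct consequence of both. The only step the paper leaves implicit is precisely your ``independence of $n$'' observation, which converts the single hypothesis $b_1=b_2$ for one $n\ge 3$ into the hypothesis of Lemma~\ref{lemma_all_digit_equal_x_squared_condition} for $n\in\{1,2,3\}$; you have handled this cleanly.
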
 

From the Lemma~\ref{lemma_all_digit_equal_x_squared} and Lemma~\ref{lemma_all_digit_equal_x_squared_condition} we directly conclude:
\begin{proposition}
  \label{prop_sequence_equal_digit}
  Let $B\in\mathbb{N}_{\ge 2}$ and $1\le x\le B-1$ be an integer such that $B-1$ divides $x^2$. Then, we have
  $$
    \big([\underset{n\text{ times}}{\underbrace{x,\dots,x}}]_B\big)^2=[\underset{n-1\text{ times}}{\underbrace{B-b,\dots,B-b}}\,,B-1-b,\, 
    \underset{n-1\text{ times}}{\underbrace{b-1,\dots,b-1}}\,,b ]_B
  $$
  for each $n\in\mathbb{N}_{>0}$, where $b=x^2\operatorname{rem}B$ satisfying $1\le b\le B-1$.
\end{proposition}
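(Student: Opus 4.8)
The plan is simply to check that the divisibility hypothesis $B-1\mid x^2$ forces $x^2$ into the exact shape demanded by Lemma~\ref{lemma_all_digit_equal_x_squared}, and then to quote that lemma verbatim. So the whole argument reduces to one elementary computation about the Euclidean division of $x^2$ by $B$, after which there is nothing left to do.

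First I would set $a:=x^2\operatorname{div}B$ and $b:=x^2\operatorname{rem}B$, so that $x^2=aB+b$ with $0\le b\le B-1$. Since $1\le x\le B-1$ we have $x^2\le(B-1)^2=B^2-2B+1<B^2$, which gives $0\le a\le B-2$. Writing $x^2=a(B-1)+(a+b)$ and using $B-1\mid x^2$, we get $B-1\mid a+b$. Now $0\le a+b\le (B-2)+(B-1)=2B-3<2(B-1)$, so $a+b\in\{0,B-1\}$. The case $a+b=0$ would force $a=b=0$, hence $x^2=0$, contradicting $x\ge 1$; therefore $a+b=B-1$, i.e.\ $a=B-1-b$.

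It remains to pin down the range of $b$. If $b=0$ then $a=B-1$, contradicting $a\le B-2$; hence $1\le b\le B-1$. Thus $x^2=(B-1-b)B+b$ with $1\le b\le B-1$, which is precisely the hypothesis of Lemma~\ref{lemma_all_digit_equal_x_squared}; applying it yields the claimed base-$B$ representation of $\big([x,\dots,x]_B\big)^2$ for every $n\in\mathbb{N}_{>0}$. Finally, since $0\le b\le B-1$, the equality $x^2=(B-1-b)B+b$ also shows $b=x^2\operatorname{rem}B$, matching the statement. (Conversely, Lemma~\ref{lemma_all_digit_equal_x_squared_condition} and Corollary~\ref{cor_sequence_equal_digit} show that the divisibility hypothesis is essentially necessary, but that direction is not needed for the proposition.)

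There is no real obstacle here: the only points requiring a little care are the bookkeeping on the ranges of $a$ and $b$ and the exclusion of the two degenerate cases $a+b=0$ and $b=0$, both of which are dispatched using $x\ge 1$ together with $x\le B-1$.
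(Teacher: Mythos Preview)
Your proof is correct and follows the same route the paper intends: reduce to Lemma~\ref{lemma_all_digit_equal_x_squared} by showing that the divisibility hypothesis $B-1\mid x^2$ forces $x^2=(B-1-b)B+b$ with $1\le b\le B-1$. The paper merely states that the proposition follows ``directly'' from Lemma~\ref{lemma_all_digit_equal_x_squared} and Lemma~\ref{lemma_all_digit_equal_x_squared_condition}, leaving this bridge implicit; you have spelled it out cleanly via the Euclidean division $x^2=aB+b$, the bound $a\le B-2$, and the congruence $a+b\equiv 0\pmod{B-1}$, which is exactly what is needed.
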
 


Note that for $B=10$ the term $B-1=9$ is a square itself, so that there are $\sqrt{B-1}$ integers $x$ such that $x^2$ is divisible by $B-1$ and $1\le x\le B-1$. Another example 
is given by $B=17$. In general $x=B-1$ is always a solution leading to
$$
  \big([\underset{n\text{ times}}{\underbrace{B-1,\dots,B-1}}]_B\big)^2=[\underset{n-1\text{ times}}{\underbrace{B-1,\dots,B-1}},\,B-2,\,\underset{n-1\text{ times}}{\underbrace{0,\dots,0}}\,,1]_B.
$$ 
If $B-1$ is \emph{squarefree}\footnote{An integer $n\in\mathbb{N}_{>0}$ is called squarefree if $k^2 | n$ implies $k=1$ for each positive integer $k$.}, then this is indeed the only 
possibility. Corresponding examples are given by $B=4$ and $B=11$. In general there are exactly $s$ solutions where $s$ is the largest integer such that $B-1=s^2t$ for some integer $t$.

By construction, the squares in Proposition~\ref{prop_sequence_equal_digit} typically consist of four different digits with respect to base $B$ if $n\ge 2$. However, by minor 
modifications we can achieve three different digits as e.g.\ shown in Equation~(\ref{ex_infinite_pattern}).  
In general we assume the form 
\begin{equation}
  \label{eq_sequence_equal_digit}
  N=s+B^c\cdot x\cdot \frac{B^n-1}{B-1}+r\cdot B^{c+n}, 
\end{equation} 
where we call $r$ prefix, $s$ suffix, and $c=l_B(s)$ is the number of digits of $s$ in base $B$. The possible choices for $x$ are completely characterized in 
Corollary~\ref{cor_sequence_equal_digit}. So, we may just loop over all positive integers $s,r$ up to a suitable upper bound, loop over all choices for $x$, 
set $n$ to the number of digits of $\max\{s,r\}$, and just check whether $N^2$ consists of just three different digits. In Table~\ref{table_sequence_equal_digit} 
we list the found suitable choices for the three parameters. 
\begin{table}[htp]
  \begin{center}
    \begin{tabular}{rrrr}
      \hline 
      $s$ & $r$ & $x$ & $D_{10}(N^2)$\\ 
      \hline
      4 & & 3 & $\{1,5,6\}$ \\
      5 &     & 3 & $\{1,2,5\}$ \\
      5 & 2 & 3 & $\{2,4,5\}$ \\
      5 & 123 & 3 & $\{1,2,5\}$ \\
      8 &     & 3 & $\{1,2,4\}$ \\
      59 & & 3 & $\{1,2,8\}$ \\
      504485 & & 3 & $\{1,2,5\}$ \\
      5 & & 6 & $\{2,4,5\}$ \\  
      5 & 1 & 6 & $\{2,5,7\}$\\
      7 & & 6 & $\{4,8,9\}$\\
      8 &  & 6 & $\{2,4,6\}$ \\
      515 & & 6 & $\{2,4,5\}$ \\ 
      7 & & 9 & $\{0,4,9\}$ \\ 
      \hline
    \end{tabular}
    \caption{Suitable parameters for Equation~(\ref{eq_sequence_equal_digit}).}
    \label{table_sequence_equal_digit}
  \end{center}
\end{table}      

As there are suitable suffixes as large as $s=504485$ we want to go a bit into details of a more sophisticated search algorithm. Starting from 
Equation~(\ref{eq_sequence_equal_digit}) we compute
$$
  N^2=s^2+2sxB^c\cdot\frac{B^n-1}{B-1}+2rs B^{c+n} + x^2B^{2c}\cdot\left(\frac{B^n-1}{B-1}\right)^2+2rxB^{2c+n}\cdot\frac{B^n-1}{B-1}+ r^2 B^{2c+2n}. 
$$
From here we already see that if $c$ is the number of digits of $s$, then the last $c$ digits of $s^2$ occur as digits of $N^2$. E.g.\ for $s=504485$ the 
three digits are forced to be contained in $\{1,2,5\}$ due to $[504485]^2=[254505|115225]$. In Section~\ref{sec_search_algorithms} we present an algorithm to compute a 
list of candidates for $s$.

Choosing $1\le x\le B-1$ such that $B-1$ divides $x^2$ and setting $b=x^2\operatorname{rem}B$ we can rewrite our equation for $N^2$ to
\begin{eqnarray}
  N^2&=&\left(s^2+bB^{2c}\right)+\left(2sx+(b-1)B^{c+1}\right)\cdot B^c\cdot\frac{B^{n-1}-1}{B-1}+\left(2sx+2rsB+(B-1-b)B^{c+1}\right)\cdot B^{c+n-1}\notag\\
  &&+\left(2rx+(B-b)B\right)\cdot B^{2c+n}\cdot\frac{B^{n-1}-1}{B-1}+\left(2rx+r^2B\right)\cdot B^{2c+2n-1}.
\end{eqnarray}
To further bring the formula in the direction of a base-$B$-representation we need a little auxiliary result.
\begin{lemma}
  Let $y\in\mathbb{N}_{>0}$ and $u,v$ be defined by $u+(B-1)v=y$, where $1\le u\le B-2$ and $v\in\mathbb{N}_{\ge 0}$. With this, let $l$ be the 
  smallest integer such that $u\cdot\frac{B^l-1}{B-1}\ge v$. Then, we have
  $$
    y\cdot\frac{B^n-1}{B-1}=\underset{\in[0,B^l-1]}{\underbrace{u\cdot\frac{B^l-1}{B-1}-v}}+\sum_{i=0}^{n-l-1} u\cdot B^{l+i} +v\cdot B^n
  $$
  for $n>l$, i.e., the attained digits are contained in $\varphi_B(u\cdot\frac{B^l-1}{B-1}-v)$, $\varphi_B(v)$, or equal to $u$. 
\end{lemma}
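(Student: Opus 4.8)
The plan is to split the statement into two parts: an algebraic identity, proved by a routine geometric-series manipulation, and a two-sided estimate $0\le u\cdot\frac{B^l-1}{B-1}-v\le B^l-1$, which is where the choice of $l$ enters. Once both are in hand, the digit claim is read off by checking that the three summands occupy pairwise disjoint ranges of base-$B$ positions, so that no carries occur.

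For the identity I would start from $y=u+(B-1)v$ and write
$$
y\cdot\frac{B^n-1}{B-1}=u\cdot\frac{B^n-1}{B-1}+v\left(B^n-1\right)=u\sum_{i=0}^{n-1}B^i+vB^n-v .
$$
Splitting the geometric sum at index $l$ (legitimate because $n>l\ge 0$) gives $u\sum_{i=0}^{n-1}B^i=u\cdot\frac{B^l-1}{B-1}+\sum_{i=0}^{n-l-1}uB^{l+i}$, and substituting this back reproduces exactly the claimed expression with bracketed term $u\cdot\frac{B^l-1}{B-1}-v$.

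The substantive part is the bound on $r:=u\cdot\frac{B^l-1}{B-1}-v$. First note $l$ is well defined: since $u\ge 1$ and $B\ge 2$, the quantity $u\cdot\frac{B^l-1}{B-1}$ is strictly increasing and unbounded in $l$, so the set of indices with $u\cdot\frac{B^l-1}{B-1}\ge v$ is nonempty and has a least element. The lower bound $r\ge 0$ is immediate from the defining inequality for $l$. For the upper bound I distinguish two cases. If $l=0$ then $u\cdot\frac{B^0-1}{B-1}=0\ge v$ forces $v=0$, hence $r=0\le 0=B^0-1$. If $l\ge 1$, then by minimality $l-1$ violates the inequality, i.e.\ $u\cdot\frac{B^{l-1}-1}{B-1}<v$, and since both sides are integers $v\ge u\cdot\frac{B^{l-1}-1}{B-1}+1$; therefore
$$
r\le u\cdot\frac{B^l-1}{B-1}-u\cdot\frac{B^{l-1}-1}{B-1}-1=u\cdot\frac{B^l-B^{l-1}}{B-1}-1=uB^{l-1}-1\le (B-2)B^{l-1}-1\le B^l-1,
$$
where $u\le B-2$ is used in the last step.

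Finally I would conclude: since $n>l$, the sum $\sum_{i=0}^{n-l-1}uB^{l+i}$ is nonempty and places the digit $u$ (a legal digit as $1\le u\le B-2$) in positions $l,\dots,n-1$; the term $r\in[0,B^l-1]$ occupies only positions $0,\dots,l-1$; and $vB^n$ occupies only positions $\ge n$. As these ranges are pairwise disjoint, there are no carries, and the base-$B$ digits of $y\cdot\frac{B^n-1}{B-1}$ are precisely the digits of $\varphi_B(r)$, the digit $u$, and the digits of $\varphi_B(v)$. I expect no genuinely hard step; the only mildly delicate points are verifying the existence/minimality of $l$ and the degenerate case $v=0$ (equivalently $l=0$, where $r$ and the $\varphi_B$-contribution vanish), and being careful in the upper-bound computation that it is the hypothesis $u\le B-2$ — not merely $u\le B-1$ — that makes the estimate go through, which is exactly why the decomposition is taken with $1\le u\le B-2$.
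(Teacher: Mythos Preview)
Your proof is correct. The paper actually states this lemma without proof, so there is nothing to compare against; your argument fills the gap cleanly. The algebraic identity is verified by the geometric-series split you give, and your two-case analysis for the bound $0\le r\le B^l-1$ is the natural one, with the minimality of $l$ doing exactly the work it should. Your observation that the hypothesis $u\le B-2$ (rather than $u\le B-1$) is what makes the upper bound go through is apt, and your treatment of the degenerate case $l=0$ (forcing $v=0$) is the right way to handle it.
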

So having chosen $s$ and $x$ we can compute the occurring digits in the last $n+c-1$ places of $N^2$ without specifying $n$. Since 
$x<B$ we typically have $2rx\ll r^2$, so that the leading digits of $N^2$ are determined by the leading digits of $r^2$. In Section~\ref{sec_search_algorithms} we also 
present an algorithm to compute a list of candidates for $r$. 

We want to close this subsection by stating a specific parametric choice for $r$, $s$, and $x$.
\begin{lemma}
  \label{lemma_pattern_b_1_s}
  Let $B\ge 2$. If $x:=\sqrt{B-1}\in \mathbb{N}$, then $\big([B-1,\dots,B-1,B-x]_B\big)^2$ consists of digits in $\{0,B-2x,B-1\}$. 
  If $x:=\sqrt{B+1}-1\in \mathbb{N}$, then $\big([B-1,\dots,B-1,B-x]_B\big)^2$ consists of digits in $\{0,B-2x,B-1\}$.  
\end{lemma}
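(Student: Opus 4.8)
The plan is to treat the number $M:=[B-1,\dots,B-1,B-x]_B$ with $n-1$ leading digits equal to $B-1$ as a difference of two nice numbers, compute $M^2$ in closed form, and then reduce everything modulo $B$ place by place. Concretely I would write
\begin{equation}
  M = [\underbrace{B-1,\dots,B-1}_{n-1},B-x]_B = B^n - 1 - (x-1)\cdot ? \ ,
\end{equation}
or, more cleanly, $M = (B-1)\cdot\frac{B^n-1}{B-1}+ (B-x) - (B-1) = B^n - x$ — that is, $M = B^n - x$. This is the key simplification: the last digit is $B-x$ and all the others are $B-1$ precisely because subtracting $x$ from $B^n$ borrows all the way up. So $M^2 = B^{2n} - 2xB^n + x^2$.

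Next I would split into the two stated cases. In the first case $x=\sqrt{B-1}$, so $x^2 = B-1$, and $M^2 = B^{2n} - 2xB^n + (B-1)$. Now I read off digits: the bottom block comes from $(B-1)-2x\cdot B^n$'s interaction — more carefully, $M^2 = (B^{2n}-B^n) + (B^n - 2xB^n) + (B-1) = (B^{2n}-B^n) - (2x-1)B^n + (B-1)$. Writing $2x-1 = (B-1) - (B-2x)$ and using $0\le B-2x$ (which needs checking: $x=\sqrt{B-1}\le (B)/2$ for $B\ge 4$, and small $B$ handled separately), one expands $(B^{2n}-B^n)$ as a block of digits $B-1$, subtracts $(2x-1)$ from the digit in position $n$, turning it into $B-2x$ with a borrow that propagates as $0$'s downward until it is absorbed by the $+(B-1)$ in position $0$. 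The cleanest way to make this rigorous is to exhibit the claimed base-$B$ representation explicitly — something like $[\underbrace{B-1,\dots}_{n-1},B-2x,\underbrace{0,\dots,0}_{n-1},B-1]_B$ — and verify the geometric-series identity $\sum$ equals $B^{2n}-2xB^n+x^2$, exactly in the style of the proof of Lemma~\ref{lemma_all_digit_equal_x_squared}. The second case is identical with $x=\sqrt{B+1}-1$, so $x^2 = B+1-2x$, i.e.\ $x^2+2x = B+1$, whence $M^2 = B^{2n} - 2xB^n + x^2 = B^{2n} + (x^2+2x)B^n - 2xB^{n} - 2xB^n + x^2$ — I would instead just substitute $x^2 = B - (2x-1)$ and proceed exactly as before, landing on the same shape of representation with digits in $\{0,B-2x,B-1\}$.

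I expect the main obstacle to be purely bookkeeping: confirming that the digit $B-2x$ is a legitimate digit (i.e.\ $0\le B-2x\le B-1$, so $x\le B/2$) and that the borrow chain behaves as claimed for all $n$, including the boundary cases $n=1$ and small $B$ where the two ``blocks'' of $0$'s and $(B-1)$'s degenerate. In the first family the relevant bases are $B\in\{2,5,10,17,26,\dots\}$ (so $x\in\{1,2,3,4,5,\dots\}$) and in the second $B\in\{3,8,15,24,\dots\}$ ($x\in\{1,2,3,4,\dots\}$); for $B=2$ and $B=3$ one simply checks directly, and for the rest $x=O(\sqrt B)$ is comfortably below $B/2$. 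So the proof is: (i) identify $M=B^n-x$; (ii) in each case substitute the defining relation for $x^2$ to write $M^2$ as an explicit integer combination of powers of $B$; (iii) write down the candidate base-$B$ string and verify via telescoping geometric sums that it equals $M^2$, exactly as in Lemma~\ref{lemma_all_digit_equal_x_squared}; (iv) observe its digit set is $\subseteq\{0,B-2x,B-1\}$.
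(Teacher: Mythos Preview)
Your approach is essentially the paper's: both identify $M=B^n-x$, square to $B^{2n}-2xB^n+x^2$, and read off the base-$B$ digits as $\{0,B-1,B-2x,x^2\}$, which collapses to three digits precisely when $x^2=B-1$ or $x^2=B-2x$. The paper's proof is a two-line version of what you wrote, simply asserting the digit set rather than verifying it via an explicit geometric-sum identity; your plan to spell out the representation $[\underbrace{B-1,\dots,B-1}_{n-1},B-2x,\underbrace{0,\dots,0}_{n-1},x^2]_B$ and check it telescopes is a perfectly good (and more careful) way to fill that in.

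One arithmetic slip to fix in your second case: from $(x+1)^2=B+1$ you get $x^2+2x+1=B+1$, hence $x^2=B-2x$, not $B-(2x-1)$. With the correct substitution the last digit is $x^2=B-2x$, so the representation becomes $[\underbrace{B-1,\dots,B-1}_{n-1},B-2x,\underbrace{0,\dots,0}_{n-1},B-2x]_B$ and the digit set is visibly $\{0,B-2x,B-1\}$ as claimed.
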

\begin{proof}
  We have $\left(B^k-x\right)^2=B^{2k}-2x\cdot B^k+x^2$. For $1\le x^2<B$ the digits in base $B$-representation are given by $0$, $B-1$, $B-2x$ and $x^2$, so that either 
  $B-1=x^2$ or $B-2x=x^2$.
\end{proof}

Is it always, i.e., for all bases $B$, possible to find a pattern of the form $[B-1,B-1,\dots,B-1,z]_B$ such that the square consists of only three digits? For $B=10$ we have to choose 
$z=7$. For $z=3$, i.e., the other root of $B-1=9$ modulo $10$, we end up with squares consisting of four different digits. For e.g.\ $B=11$ the number $B-1$ is not a quadratic 
residue.\footnote{An integer $m$ is called a quadratic residue modulo $n$, iff there exists an integer $x$ such that $x^2\equiv m\pmod n$.}

\subsubsection{Infinite patterns with blocks of zeros}
\label{subsec_infinite_zero-blocks}

If the repeating digit is a zero, then it seems to be much easier to construct some infinite patterns. An easy example is given by:
\begin{lemma}
  \label{lemma_zero_blocks_and_ones_1}
  For $B\ge 3$ and $n\ge 0$ we have
  $$
    \big([1,\underset{n\text{ times}}{\underbrace{0,\dots,0}},1]_{B}\big)^2=[1,\underset{n\text{ times}}{\underbrace{0,\dots,0}},2,\underset{n\text{ times}}{\underbrace{0,\dots,0}},1]_B.
  $$
\end{lemma}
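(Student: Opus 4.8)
The plan is to evaluate both sides as integers and observe that the claimed right-hand side is already written in a valid base-$B$ representation. First I would note that the number being squared is
\begin{equation}
  N=\big([1,\underset{n\text{ times}}{\underbrace{0,\dots,0}},1]_{B}\big)=B^{n+1}+1,
\end{equation}
since the only nonzero digits are the $1$ in position $0$ and the $1$ in position $n+1$.

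Next I would simply expand the square:
\begin{equation}
  N^2=\big(B^{n+1}+1\big)^2=B^{2n+2}+2\cdot B^{n+1}+1.
\end{equation}
The claimed right-hand side $[1,\underset{n}{\underbrace{0,\dots,0}},2,\underset{n}{\underbrace{0,\dots,0}},1]_B$ has the digit $1$ in position $2n+2$, the digit $2$ in position $n+1$, the digit $1$ in position $0$, and zeros elsewhere, so it equals exactly $B^{2n+2}+2B^{n+1}+1$. Hence the two integers agree.

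Finally, to conclude that this expression really \emph{is} the base-$B$ representation of $N^2$ (and not merely equal to it as an integer), I would check two things: that the three occupied positions $0$, $n+1$, $2n+2$ are pairwise distinct, which holds because $n\ge 0$ gives $0<n+1<2n+2$; and that every coefficient is an admissible digit, i.e.\ lies in $\{0,\dots,B-1\}$ — the only non-trivial case being the coefficient $2$, which requires $2\le B-1$, that is, $B\ge 3$, exactly the hypothesis. There is no real obstacle here; the single point of care is the hypothesis $B\ge 3$, needed precisely so that the middle digit $2$ does not produce a carry.
\qed
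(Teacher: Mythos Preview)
Your proof is correct and follows exactly the same approach as the paper: identify $N=B^{n+1}+1$, square it to obtain $B^{2n+2}+2B^{n+1}+1$, and recognize this as the claimed right-hand side. Your additional remark that $B\ge 3$ is precisely what makes the coefficient $2$ a legitimate digit is a nice touch that the paper leaves implicit.
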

\begin{proof}
  Let $N\in\mathbb{N}$ be the root on the left hand side, then we have $N=B^{n+1}+1$. Since the number $M$ on the right hand side is given by 
  $M=B^{2n+2}+2B^{n+1}+1$, we have $M=N^2$.
\end{proof}  

There are a lot of similar results with similar proofs (which we omit). Two further examples are given by:
\begin{lemma}
  \label{lemma_zero_blocks_and_ones_2}
  For $B\ge 3$ and $n\ge 1$ we have
  $$
    \big([1,\underset{n\text{ times}}{\underbrace{0,\dots,0}},1,1]_{B}\big)^2=[1,\underset{n\text{ times}}{\underbrace{0,\dots,0}},2,2,\underset{n-1\text{ times}}{\underbrace{0,\dots,0}},1,2,1]_B.
  $$
\end{lemma}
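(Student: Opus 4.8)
The plan is to mimic the one-line computational proof of Lemma~\ref{lemma_zero_blocks_and_ones_1}: name the integer on the left-hand side, square it, and recognise the expansion as the claimed base-$B$ string. Reading the digits of $[1,\underbrace{0,\dots,0}_{n},1,1]_B$ from the least significant end, the integer being squared is
\[
  N = B^{n+2}+B+1 .
\]
First I would expand the six cross terms of $(B^{n+2}+B+1)^2$ to get
\[
  N^2 = B^{2n+4}+2B^{n+3}+2B^{n+2}+B^2+2B+1 .
\]

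Next I would argue that this expression is already the base-$B$ representation of $N^2$. Two observations suffice. First, every coefficient occurring lies in $\{1,2\}$, hence is a legal digit precisely because $B\ge 3$. Second, the six exponents $2n+4,\ n+3,\ n+2,\ 2,\ 1,\ 0$ must be pairwise distinct, so that no two monomials combine and no carrying takes place; running through the possible equalities one finds that the only one that can hold is $n+2=2$, which is ruled out exactly by the hypothesis $n\ge 1$. (For $n=0$ the terms $2B^{n+2}$ and $B^2$ would merge into $3B^2$, which both explains why the statement is restricted to $n\ge 1$ and indicates that the analogous $n=0$ case needs $B\ge 4$.)

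Finally I would read off the digit string and check the lengths of the zero blocks. The set of exponents lying strictly between $2$ and $n+2$ has size $n-1$, and the set of exponents strictly between $n+3$ and $2n+4$ has size $n$; hence from the most significant end the digits are $1$, then $n$ zeros, then $2,2$, then $n-1$ zeros, then $1,2,1$, i.e.\ exactly $[1,\underbrace{0,\dots,0}_{n},2,2,\underbrace{0,\dots,0}_{n-1},1,2,1]_B$.

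There is essentially no obstacle here beyond this bookkeeping; the only points that genuinely use the stated hypotheses are the legality of the digit $2$ ($B\ge 3$) and the non-collision of exponents ($n\ge 1$), both of which I would flag explicitly. As an alternative one could, following the style of the proof of Lemma~\ref{lemma_all_digit_equal_x_squared}, write the right-hand side as a finite sum of geometric series and simplify it down to $(B^{n+2}+B+1)^2$, but the direct expansion above is shorter.
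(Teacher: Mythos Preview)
Your proof is correct and is precisely the approach the paper intends: the paper actually omits the proof of this lemma, remarking only that ``there are a lot of similar results with similar proofs (which we omit)'', referring back to the direct computation in Lemma~\ref{lemma_zero_blocks_and_ones_1}. Your expansion $N=B^{n+2}+B+1$, $N^2=B^{2n+4}+2B^{n+3}+2B^{n+2}+B^2+2B+1$, together with the exponent-collision check and the observation that the coefficients lie in $\{1,2\}\subseteq\{0,\dots,B-1\}$, is exactly that analogous proof carried out in full detail.
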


\begin{lemma}
  \label{lemma_zero_blocks_and_ones_3}
  For $B\ge 3$ and $n\ge 1$ we have
  $$
    \big([1,1,\underset{n\text{ times}}{\underbrace{0,\dots,0}},1]_{B}\big)^2=[1,2,1,\underset{n-1\text{ times}}{\underbrace{0,\dots,0}},2,2,\underset{n\text{ times}}{\underbrace{0,\dots,0}},1]_B.
  $$
\end{lemma}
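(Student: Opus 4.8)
The plan is to mimic the proof of Lemma~\ref{lemma_zero_blocks_and_ones_1}: identify the integer $N$ on the left-hand side and the integer $M$ on the right-hand side in closed form as sums of powers of $B$, and then verify algebraically that $M=N^2$. The digit strings on both sides are easy to read off: on the left, the number $[1,1,\underset{n}{\underbrace{0,\dots,0}},1]_B$ has a $1$ in position $0$, a $1$ in position $n+1$, and a $1$ in position $n+2$, so
\begin{equation*}
  N = B^{n+2}+B^{n+1}+1.
\end{equation*}

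Next I would expand $N^2$ directly. Squaring the trinomial gives
\begin{equation*}
  N^2 = B^{2n+4}+B^{2n+2}+1+2B^{2n+3}+2B^{n+2}+2B^{n+1},
\end{equation*}
i.e.\ $N^2 = B^{2n+4}+2B^{2n+3}+B^{2n+2}+2B^{n+2}+2B^{n+1}+1$. The remaining task is to check that this matches the digit string claimed on the right, namely
\begin{equation*}
  [1,2,1,\underset{n-1}{\underbrace{0,\dots,0}},2,2,\underset{n}{\underbrace{0,\dots,0}},1]_B,
\end{equation*}
which reads as a $1$ in position $0$, a $2$ in position $n+1$, a $2$ in position $n+2$, a $1$ in position $2n+2$, a $2$ in position $2n+3$, and a $1$ in position $2n+4$. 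Comparing exponent by exponent, $N^2$ has exactly coefficients $1,2,2$ at positions $n+1,n+2$ (and $1$ at position $0$) in the low block, and $1,2,1$ at positions $2n+2,2n+3,2n+4$ in the high block, with zeros everywhere else — exactly the claimed string. One also records the number $M$ represented by the right-hand side, $M=B^{2n+4}+2B^{2n+3}+B^{2n+2}+2B^{n+2}+2B^{n+1}+1$, and observes $M=N^2$.

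The only genuine thing to watch is that no carries occur and no two nonzero contributions collide, which is precisely where the hypothesis $n\ge 1$ enters: it guarantees $n+2 < 2n+2$, so the low block $\{0,n+1,n+2\}$ and the high block $\{2n+2,2n+3,2n+4\}$ of occupied positions are disjoint, and since $B\ge 3$ the coefficient $2$ is a legitimate single digit ($2\le B-1$). Thus every coefficient lies in $\{0,1,2\}\subseteq\{0,\dots,B-1\}$ and the power-sum expansion of $N^2$ is already a valid base-$B$ representation, matching $M$ digit for digit. There is no real obstacle here beyond bookkeeping of exponents; the proof is a two-line computation once $N$ and $M$ are written out, so I would keep it as terse as the proof of Lemma~\ref{lemma_zero_blocks_and_ones_1}, perhaps even omitting it as the authors do for similar statements.
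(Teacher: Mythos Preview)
Your proof is correct and follows exactly the approach the paper indicates (it omits the proof, saying only that it is similar to that of Lemma~\ref{lemma_zero_blocks_and_ones_1}): write $N=B^{n+2}+B^{n+1}+1$, expand $N^2$, and observe that for $B\ge 3$ and $n\ge 1$ the six nonzero terms sit at distinct positions with coefficients in $\{1,2\}$, matching the claimed digit string.
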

In order to describe an underlying general construction pattern we use a prefix $r$ and a suffix $s$. 
\begin{lemma}
   \label{lemma_r_s_construction}
   Given two positive integers $r$ and $s$, we have
   \begin{equation}
    \big(\big[[\varphi_B(r),\underset{n\text{ times}}{\underbrace{0,\dots,0}},\varphi_B(s)\big]_B\big)^2=
    \big[\varphi_B(r^2),\underset{n+l_B(s)-l_B(2rs) \text{ times}}{\underbrace{0,\dots,0}},\varphi_B(2rs),\underset{n+l_B(s)-l_B(s^2)\text{ times}}{\underbrace{0,\dots,0}},\varphi_B(s^2)\big]
  \end{equation} 
  for all
  $n\ge \max\{l_B(2rs)-l_B(s),l_B(s^2)-l_B(s)\}=\max\{l_B(2rs),l_B(s^2)\}-l_B(s)\ge 0$.
\end{lemma}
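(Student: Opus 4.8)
The plan is to prove $M = N^2$ directly by writing both sides as explicit integers and expanding, exactly in the spirit of the proofs of Lemmas~\ref{lemma_zero_blocks_and_ones_1}--\ref{lemma_zero_blocks_and_ones_3}, but now keeping the prefix $r$ and suffix $s$ symbolic. First I would fix notation: write $\sigma := l_B(s)$ and note that the integer $N$ whose base-$B$ representation is $\big[\varphi_B(r),0,\dots,0,\varphi_B(s)\big]$ with $n$ interior zeros is simply $N = r\cdot B^{\,n+\sigma} + s$, since appending $\varphi_B(s)$ (which has exactly $\sigma$ digits) after $n$ zeros after $\varphi_B(r)$ shifts $r$ by $n+\sigma$ places. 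Squaring gives the clean identity $N^2 = r^2 B^{\,2(n+\sigma)} + 2rs\, B^{\,n+\sigma} + s^2$, which is the whole arithmetic content; everything else is bookkeeping about digit positions.

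Next I would verify that the right-hand side string indeed represents this integer. Reading the claimed RHS from least significant digit upward: the block $\varphi_B(s^2)$ occupies positions $0$ through $l_B(s^2)-1$; then $n+\sigma - l_B(s^2)$ zeros; then $\varphi_B(2rs)$ occupying the next $l_B(2rs)$ positions, i.e. starting at position $\big(l_B(s^2)\big) + \big(n+\sigma-l_B(s^2)\big) = n+\sigma$; then $n+\sigma - l_B(s^2)$... — here I must be careful: the second zero block has length $n + l_B(s) - l_B(s^2)$ written in the statement, wait, let me instead just track the \emph{starting position} of each block. The key claim to check is: $\varphi_B(s^2)$ starts at position $0$, $\varphi_B(2rs)$ starts at position $n+\sigma$, and $\varphi_B(r^2)$ starts at position $2(n+\sigma)$. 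Given the lengths of the zero blocks stated in the lemma, the block $\varphi_B(2rs)$ starts at $l_B(s^2) + \big(n+\sigma - l_B(s^2)\big) = n+\sigma$, and $\varphi_B(r^2)$ starts at $l_B(2rs) + \big(n + \sigma - l_B(2rs)\big) + (n+\sigma) = 2(n+\sigma)$, as desired. Thus the RHS integer equals $r^2 B^{\,2(n+\sigma)} + 2rs\,B^{\,n+\sigma} + s^2$, matching $N^2$.

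The only genuine obstacle is well-definedness: for the displayed string to be a legitimate base-$B$ representation, (i) the two zero-block lengths must be nonnegative, and (ii) the three blocks $\varphi_B(r^2)$, $\varphi_B(2rs)$, $\varphi_B(s^2)$ must not \emph{overflow} into each other, i.e. $l_B(2rs) \le n+\sigma$ and $l_B(s^2)\le n+\sigma$. Both reduce to the stated hypothesis $n \ge \max\{l_B(2rs), l_B(s^2)\} - \sigma$: nonnegativity of the first zero block is $n + \sigma \ge l_B(2rs)$ and of the second is $n+\sigma \ge l_B(s^2)$, which are precisely the non-overflow conditions, and I would also note $l_B(s^2) \le l_B(2rs)$ fails in general so both terms of the max are needed. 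One should also remark that $l_B(2rs) \ge l_B(s)$ and $l_B(s^2)\ge l_B(s)$ always hold (since $2rs \ge s$ and $s^2 \ge s$), which is why the bound is stated as $\max\{\cdots\}-l_B(s)\ge 0$; I would state this equivalence $\max\{l_B(2rs)-l_B(s), l_B(s^2)-l_B(s)\} = \max\{l_B(2rs),l_B(s^2)\}-l_B(s)$ explicitly since the lemma writes it both ways.

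Finally I would assemble the argument in order: (1) compute $N = rB^{n+\sigma}+s$ and hence $N^2 = r^2B^{2(n+\sigma)} + 2rs B^{n+\sigma} + s^2$; (2) under the hypothesis on $n$, check the three summands occupy disjoint digit ranges $[0, l_B(s^2))$, $[n+\sigma, n+\sigma+l_B(2rs))$, $[2(n+\sigma), 2(n+\sigma)+l_B(r^2))$ — here disjointness of the first two uses $l_B(s^2)\le n+\sigma$ and of the last two uses $l_B(2rs)\le n+\sigma$; (3) conclude that concatenating $\varphi_B(r^2)$, the appropriate zero padding, $\varphi_B(2rs)$, more zero padding, and $\varphi_B(s^2)$ yields exactly $\varphi_B(N^2)$, with the zero-block lengths being the gaps $n+\sigma - l_B(2rs)$ and $n+\sigma - l_B(s^2)$, which equal the lengths written in the statement after substituting $\sigma = l_B(s)$. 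No step is hard; the care is entirely in the off-by-one digit-position accounting, which is where I would slow down.
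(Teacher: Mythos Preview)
Your proposal is correct and follows exactly the same approach as the paper: write $N=r\cdot B^{n+l_B(s)}+s$, square to obtain $N^2=r^2 B^{2n+2l_B(s)}+2rs\,B^{n+l_B(s)}+s^2$, and read off the base-$B$ representation. The paper's proof in fact stops after displaying $N^2$ and leaves all the digit-position bookkeeping and the verification of the non-overlap condition implicit, whereas you spell those details out carefully; your write-up is more complete, not different in substance.
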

\begin{proof}
  The root on the left hand side is given by $N=r\cdot B^{n+l_B(s)}+s$, so that 
  $$
    N^2=r^2\cdot B^{2n+2l_B(s)} +2rs\cdot B^{n+l_B(s)}+s^2.
  $$
\end{proof}
For the construction of infinite patterns with at most $3$ different digits (including zero), where all digits except $0$ occur only a finite number of times, we can proceed as follows. 
Given $r$ and $s$ the condition on the digits of the square is
\begin{equation}
  \left|\Big(D_B(r^2)\cup D_B(2rs)\cup D_B(s^2)\Big)\backslash \{0\}\right|\le 2.
\end{equation}
So, if we first compute a list of integers $s$ that are not {\lq\lq}too large{\rq\rq} satisfying $\left|D_B(s^2)\backslash \{0\}\right|\le 2$ than we can 
check the above condition for all pairs. Note that if there is such a pattern for $(r,s)$ then there is a corresponding {\lq\lq}reversed{\rq\rq} pattern for $(s,r)$, so that 
we assume $r\le s$ in the subsequent lists. Examples of this construction are given by
\begin{itemize}
  \item $r=[1]_B$, $s=[1]_B$, $r^2=[1]_B$, $2rs=[2]_B$, $s^2=[1]_B$, $D=\{0,1,2\}$ for $B\ge 3$;  
  \item $r=[1]_B$, $s=[1,1]_B$, $r^2=[1]_B$, $2rs=[2,2]_B$, $s^2=[1,2,1]_B$, $D=\{0,1,2\}$ for $B\ge 3$; 
  \item $r=[1]_B$, $s=[2]_B$, $r^2=[1]_B$, $2rs=[4]_B$, $s^2=[4]_B$, $D=\{0,1,4\}$ for $B\ge 5$;
  \item $r=[1]_B$, $s=[B-1]_B$, $r^2=[1]_B$, $2rs=[1,B-2]_B$, $s^2=[B-2,1]_B$, $D=\{0,1,B-2\}$ for $B\ge 3$;
  \item $r=[2]_B$, $s=[2]_B$, $r^2=[4]_B$, $2rs=[8]_B$, $s^2=[4]_B$, $D=\{0,4,8\}$ for $B\ge 9$;
  \item $r=[2]_B$, $s=[2,2]_B$, $r^2=[4]_B$, $2rs=[8,8]_B$, $s^2=[4,8,4]_B$, $D=\{0,4,8\}$ for $B\ge 9$.
\end{itemize}
We remark that e.g.\ $r=s=[1,1]_B$ is not in this list since $r^2=[1,2,1]_B$ and $2rs=[2,4,2]_B$. Note that in the above list the examples are valid 
for all $B$ that are sufficiently large and $\varphi_B(r^2),\varphi_B(2rs),\varphi_B(s^2)$ does not contain the zero digit. A different example is given by 
$r=[5]_{10}$, $s=[5]_{10}$ with $r^2=s^2=[2,5]_{10}$ and $2rs=[50]_{10}$, so that $D=\{0,2,5\}$. We can turn this example into a parametric example:
\begin{itemize}
  \item $r=[B/2]_B$, $s=[B/2]_B$, $r^2=[(B-2)/4,B/2]_B$, $2rs=[B/2,0]_B$, $s^2=[(B-2)/4,B/2]_B$, $D=\{0,B/2,(B-2)/4\}$ for all bases with $B\equiv 2\pmod 4$.
\end{itemize}
There are also examples where $[r]_B$ or $[s]_B$ contain at least one zero digit:
\begin{itemize}
  \item $r=[1,0,2]_{10}$, $s=[2,0,1]_{10}$, $r^2=[1,0,4,0,4]_{10}$, $2rs=[4,1,0,0,4]_{10}$, $s^2=[4,0,4,0,1]_{10}$, $D=\{0,1,4\}$ for $B=10$;
  \item $r=[5]_{10}$, $s=[5,0,5]_{10}$, $r^2=[2,5]_{10}$, $2rs=[5,0,5,0]_{10}$, $s^2=[2,5,5,0,2,5]_{10}$, $D=\{0,2,5\}$ for $B=10$.
\end{itemize}
While these two examples work for $B=10$ only so far, we can generalize them in a different way by inserting more zeros into $r$ and/or $s$:
\begin{itemize}
  \item $r=[1,0,0,2]_{10}$, $s=[2,0,0,1]_{10}$, $r^2=[1,0,0,4,0,0,4]_{10}$, $2rs=[4,0,1,0,0,0,4]_{10}$, $s^2=[4,0,0,4,0,0,1]_{10}$, $D=\{0,1,4\}$ for $B=10$;
  \item $r=[1,0,0,0,2]_{10}$, $s=[2,0,0,0,1]_{10}$, $r^2=[1,0,0,0,4,0,0,0,4]_{10}$, $2rs=[4,0,0,1,0,0,0,0,4]_{10}$, $s^2=[4,0,0,0,4,0,0,0,1]_{10}$, $D=\{0,1,4\}$ for $B=10$;
  \item $r=[5]_{10}$, $s=[5,0,0,5]_{10}$, $r^2=[2,5]_{10}$, $2rs=[5,0,0,5,0]_{10}$, $s^2=[2,5,0,5,0,0,2,5]_{10}$, $D=\{0,2,5\}$ for $B=10$;
  \item $r=[5]_{10}$, $s=[5,0,0,0,5]_{10}$, $r^2=[2,5]_{10}$, $2rs=[5,0,0,0,5,0]_{10}$, $s^2=[2,5,0,0,5,0,0,0,2,5]_{10}$, $D=\{0,2,5\}$ for $B=10$;
  \item $r=[5,0,5]_{10}$, $s=[5,0,0,5]_{10}$, $r^2=[2,5,5,0,2,5]_{10}$, $2rs=[5,0,5,5,0,5,0]_{10}$, $s^2=[2,5,0,5,0,0,2,5]_{10}$, $D=\{0,2,5\}$ for $B=10$.
\end{itemize}
We remark that these infinite patterns are not listed at \url{http://www.worldofnumbers.com/threedigits.htm}. Moreover, we can turn the first two examples 
into a $2$-parameter parametric series:
\begin{lemma}
  \label{lemma_1_2_construction_for_base_10}
  Given two integers with $k\ge 1$ and $n\ge k+1$
  we have
  \begin{eqnarray*}
  &&\big(\big[1,\underset{k\text{ times}}{\underbrace{0,\dots,0}},2,\underset{n\text{ times}}{\underbrace{0,\dots,0}},2,\underset{k\text{ times}}{\underbrace{0,\dots,0}},1\big]_{10}\big)^2\\ 
  &=&\big[1,\underset{k\text{ times}}{\underbrace{0,\dots,0}},4,\underset{k\text{ times}}{\underbrace{0,\dots,0}},4,\underset{n-k-1\text{ times}}{\underbrace{0,\dots,0}},4,\underset{k-1\text{ times}}{\underbrace{0,\dots,0}},1,\underset{k+1\text{ times}}{\underbrace{0,\dots,0}},
  4,\underset{n-k-1\text{ times}}{\underbrace{0,\dots,0}},4,\underset{k\text{ times}}{\underbrace{0,\dots,0}},4,\underset{k\text{ times}}{\underbrace{0,\dots,0}},1\big]_{10},
 \end{eqnarray*}
 and
 \begin{eqnarray*}
  &&\big(\big[2,\underset{k\text{ times}}{\underbrace{0,\dots,0}},1,\underset{n\text{ times}}{\underbrace{0,\dots,0}},1,\underset{k\text{ times}}{\underbrace{0,\dots,0}},2\big]_{10}\big)^2\\ 
  &=&\big[4,\underset{k\text{ times}}{\underbrace{0,\dots,0}},4,\underset{k\text{ times}}{\underbrace{0,\dots,0}},1,\underset{n-k-1\text{ times}}{\underbrace{0,\dots,0}},4,\underset{k-1\text{ times}}{\underbrace{0,\dots,0}},1,\underset{k+1\text{ times}}{\underbrace{0,\dots,0}},
  4,\underset{n-k-1\text{ times}}{\underbrace{0,\dots,0}},1,\underset{k\text{ times}}{\underbrace{0,\dots,0}},4,\underset{k\text{ times}}{\underbrace{0,\dots,0}},4\big]_{10},
 \end{eqnarray*}
 i.e., the squares consist of the digits $0$, $1$, and $4$ only.
\end{lemma}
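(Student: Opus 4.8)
The plan is to prove both identities by a single direct computation: expand the square of the root as a sum of powers of $10$ and keep track of the one carry that occurs. Morally this is Lemma~\ref{lemma_r_s_construction} applied in a layered fashion, but it is cleanest to just compute.

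For the first identity, write the root as
$$
  N \;=\; 1 + 2\cdot 10^{k+1} + 2\cdot 10^{n+k+2} + 10^{n+2k+3},
$$
i.e.\ $N=\sum_{i=1}^{4} c_i 10^{e_i}$ with exponent vector $(e_1,e_2,e_3,e_4)=(0,\,k+1,\,n+k+2,\,n+2k+3)$ and coefficient vector $(c_1,c_2,c_3,c_4)=(1,2,2,1)$. Then $N^2=\sum_{i,j} c_ic_j\,10^{e_i+e_j}$, which, after merging $(i,j)$ with $(j,i)$, is a sum of ten terms. I would first observe that among the exponent sums $e_i+e_j$ the unique coincidence is $e_1+e_4=e_2+e_3=n+2k+3$, all other sums being pairwise distinct; verifying this is a short case check in which the hypotheses $k\ge 1$ and $n\ge k+1$ enter precisely to make the relevant differences strictly positive (e.g.\ $n-k>0$, $k>0$). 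At that one exponent the combined coefficient is $2c_1c_4+2c_2c_3 = 2+8 = 10$, producing a single carry $10\cdot 10^{n+2k+3}=10^{n+2k+4}$; one then checks $n+2k+4$ coincides with none of the remaining eight exponents (again using $k\ge 1$, $n\ge k+1$), so no further carry is triggered. This yields
$$
  N^2 = 10^0 + 4\cdot10^{k+1} + 4\cdot10^{2k+2} + 4\cdot10^{n+k+2} + 10^{n+2k+4} + 4\cdot10^{n+3k+4} + 4\cdot10^{2n+2k+4} + 4\cdot10^{2n+3k+5} + 10^{2n+4k+6},
$$
with all nine exponents distinct and every coefficient in $\{1,4\}$ --- hence a bona fide base-$10$ digit string. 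The gaps between consecutive exponents are $k+1,\,k+1,\,n-k,\,k+2,\,k,\,n-k,\,k+1,\,k+1$, which translate into interior zero-blocks of lengths $k,\,k,\,n{-}k{-}1,\,k{+}1,\,k{-}1,\,n{-}k{-}1,\,k,\,k$ (reading from the low end); this is exactly the claimed digit string, the hypotheses $k\ge 1$ and $n\ge k+1$ ensuring $k-1\ge 0$ and $n-k-1\ge 0$ so that all blocks are legitimate.

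The second identity is proved the same way with the root written as $N'=2+10^{k+1}+10^{n+k+2}+2\cdot 10^{n+2k+3}$, i.e.\ the same exponents $e_i$ but coefficient vector $(2,1,1,2)$. The coincidence $e_1+e_4=e_2+e_3=n+2k+3$ is unchanged, the combined coefficient there is now $2c_1c_4+2c_2c_3 = 8+2 = 10$ --- again exactly one carry --- and the only effect on the rest is that the two outer diagonal terms now contribute $4$ in place of $1$ while the terms at $10^{2k+2}$ and $10^{2n+2k+4}$ contribute $1$ in place of $4$. The identical bookkeeping then gives the second asserted digit string, and in both cases the digits used are $\{0,1,4\}$.

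I do not anticipate a genuine obstacle: the argument is elementary, and the only point requiring care is the single carry coming from $2+8=8+2=10$ at the exponent $n+2k+3$, where the two ``middle'' exponent-pairs $e_1+e_4$ and $e_2+e_3$ collide; everything else is routine verification that about a dozen exponents are pairwise distinct, for which $k\ge 1$ and $n\ge k+1$ are precisely the hypotheses needed.
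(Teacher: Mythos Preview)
Your proof is correct and amounts to the same computation as the paper's, just organized differently. The paper applies Lemma~\ref{lemma_r_s_construction} with $r=10^{k+1}+2$ and $s=2\cdot 10^{k+1}+1$, computing $r^2$, $2rs$, $s^2$ separately (the single carry appearing inside $2rs=4\cdot 10^{2k+2}+10\cdot 10^{k+1}+4$) and then assembling the three pieces with zero blocks in between; you instead expand $N^2$ directly as a sum over all pairs of exponents and locate the same carry at the collision $e_1+e_4=e_2+e_3=n+2k+3$. Both routes use exactly the hypotheses $k\ge 1$ and $n\ge k+1$ to keep the relevant exponents distinct and the zero-block lengths nonnegative, and the second identity follows in each case by swapping the coefficient vectors (equivalently, interchanging $r$ and $s$).
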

\begin{proof}
  We set $B:=10$ and apply Lemma~\ref{lemma_r_s_construction} with
  \begin{eqnarray*}
    r &=& B^{k+1}+2=[1,\underset{k\text{ times}}{\underbrace{0,\dots,0}},2]_B\text{ and} \\ 
    s &=& 2\cdot B^{k+1}+1=[2,\underset{k\text{ times}}{\underbrace{0,\dots,0}},1]_B,
  \end{eqnarray*}  
  so that
  \begin{eqnarray*}
    r^2 &=& B^{2k+2}+4\cdot B^{k+1}+4=[1,\underset{k\text{ times}}{\underbrace{0,\dots,0}},4,\underset{k\text{ times}}{\underbrace{0,\dots,0}},4]_B,\\ 
    2rs &=& 4\cdot B^{2k+2}+10\cdot B^{k+1}+4=[4,\underset{k-1\text{ times}}{\underbrace{0,\dots,0}},1,0,\underset{k\text{ times}}{\underbrace{0,\dots,0}},4]_{10},\\ 
    s^2 &=& 4\cdot B^{2k+2}+4\cdot B^{k+1}+1=[4,\underset{k\text{ times}}{\underbrace{0,\dots,0}},4,\underset{k\text{ times}}{\underbrace{0,\dots,0}},1]_B,,\\ 
  \end{eqnarray*}
  $l_B(s)=k+2$, and $l_B(s^2)=l_B(2rs)=2k+3$.
  
  The second part can be proven analogously, i.e., it arises by interchanging the roles of $r$ and $s$ in Lemma~\ref{lemma_r_s_construction}.
\end{proof}
We can slightly generalize Lemma~\ref{lemma_1_2_construction_for_base_10}:
\begin{lemma}
  \label{lemma_1_2_construction_for_base_10_gen}
  Let
  $$
    N_1=\big[1,\underset{k\text{ times}}{\underbrace{0,\dots,0}},2,\underset{n\text{ times}}{\underbrace{0,\dots,0}},2,\underset{k\text{ times}}{\underbrace{0,\dots,0}},1\big]_{10} 
    =1+2\cdot 10^{k+1}+2\cdot 10^{n+k+2}+1\cdot 10^{n+2k+3}
  $$
  and
  $$
    N_2=\big[2,\underset{k\text{ times}}{\underbrace{0,\dots,0}},1,\underset{n\text{ times}}{\underbrace{0,\dots,0}},1,\underset{k\text{ times}}{\underbrace{0,\dots,0}},2\big]_{10} 
    =2+1\cdot 10^{k+1}+1\cdot 10^{n+k+2}+2\cdot 10^{n+2k+3}.
  $$
  Then, $N_1^2$ and $N_2^2$ have digits in $\{0,1,4\}$ in base $10$ if 
  $$
    k+n+2,\,\, 2k+2,\,\, 2k+n+4,\,\, 2k+2n+4,\text{ and }3k+n+4
  $$
  are pairwise different. The conditions are e.g.\ satisfied if $k>n$.
\end{lemma}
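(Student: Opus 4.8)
The plan is to expand $N_1^2$ and $N_2^2$ directly as sums of powers of $10$, collect the coefficient sitting at each exponent, and verify that \emph{(i)} the exponents that actually occur are pairwise distinct and \emph{(ii)} every surviving coefficient contributes only digits from $\{0,1,4\}$. Write $N_1=\sum_{i=0}^{3}c_i10^{e_i}$ with $(c_0,c_1,c_2,c_3)=(1,2,2,1)$ and $(e_0,e_1,e_2,e_3)=(0,\ k+1,\ n+k+2,\ n+2k+3)$, and likewise $N_2$ with the same exponents but $(c_0,c_1,c_2,c_3)=(2,1,1,2)$. Then $N_i^2=\sum_{0\le p\le q\le 3}m_{pq}c_pc_q\,10^{e_p+e_q}$, where $m_{pq}=1$ for $p=q$ and $m_{pq}=2$ otherwise. (For the subrange $n\ge k+1$ one could instead invoke Lemma~\ref{lemma_r_s_construction} with $r=10^{k+1}+2$ and $s=2\cdot 10^{k+1}+1$, whose squares and product have digits in $\{0,1,4\}$; but that route requires the middle zero-block length $n-k-1$ to be nonnegative, so it says nothing when $k>n$, which is precisely the regime the present lemma is meant to cover.)

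First I would isolate the single systematic collision among the ten exponents $e_p+e_q$: from $e_0+e_3=n+2k+3=(k+1)+(n+k+2)=e_1+e_2$ the products $(0,3)$ and $(1,2)$ both land on $10^{\,n+2k+3}$, contributing the combined coefficient $2c_0c_3+2c_1c_2$, which is $2+8=10$ for $N_1$ and $8+2=10$ for $N_2$. In both cases this position therefore carries, producing the digit $0$ and a carry of exactly $1$ into position $n+2k+4=2k+n+4$. Every other pair $\{p,q\}$ yields its exponent with multiplicity one and coefficient $m_{pq}c_pc_q\in\{1,4\}$: for $N_1$ the four diagonal terms give $1,4,4,1$ at positions $0,\ 2k+2,\ 2n+2k+4,\ 2n+4k+6$ and the four remaining off-diagonal terms give $4$; for $N_2$ the diagonal terms give $4,1,1,4$ at the same positions and the remaining off-diagonal terms again give $4$. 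Hence, as soon as the exponents $0,\ k+1,\ n+k+2,\ 2k+2,\ n+3k+4,\ 2n+2k+4,\ 2n+3k+5,\ 2n+4k+6$ and the carry target $n+2k+4$ are pairwise distinct (and distinct from $n+2k+3$, which is automatic), each occupied position carries a digit in $\{0,1,4\}$, whence $D_{10}(N_i^2)\subseteq\{0,1,4\}$.

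It then remains to cut this pairwise-distinctness down to the five inequalities in the statement. Going through the pairs, one checks that for $k,n\ge 1$ the difference of any two of the ten quantities above is a fixed nonzero integer \emph{except} for pairs drawn from $\{\,n+k+2,\ 2k+2,\ 2k+n+4,\ 2k+2n+4,\ 3k+n+4\,\}$, whose coincidences are exactly $n=k$ (from $n+k+2=2k+2$ and from $2k+2n+4=3k+n+4$), $k=0$ (from $2k+n+4=3k+n+4$), and $n=0$ (from $2k+n+4=2k+2n+4$). So requiring those five quantities to be pairwise distinct is precisely what the digit analysis needs, and it holds whenever $k>n$ and $n\ge 1$, since then $n\ne k$, $k\ne 0$, and $n\ne 0$. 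Because $N_1$ and $N_2$ share all exponents and differ only in the coefficients -- which in either case are all in $\{1,4\}$ away from the unique carrying position -- the argument applies to $N_2$ verbatim.

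The one genuinely non-mechanical point is the carry at position $n+2k+3$: one must ensure its target $n+2k+4$ is not already occupied, so that the carry cleanly creates a digit $1$ rather than turning a $4$ into a $5$ or triggering a further carry; this is exactly why $2k+n+4$ is included among the five quantities forced to be distinct. Everything else is bookkeeping of finitely many linear (in)equalities in $k$ and $n$.
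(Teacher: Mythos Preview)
Your argument is correct and follows essentially the same route as the paper: expand $N_i^2$, observe that the sole exponent collision $e_0+e_3=e_1+e_2=n+2k+3$ produces the coefficient $10$ and hence a clean carry to $n+2k+4$, and then check that the remaining nine exponents are pairwise distinct precisely under the stated five-term condition. The paper's proof simply writes down the resulting nine-term expansions of $N_1^2$ and $N_2^2$ (with the carry already absorbed into the $10^{2k+n+4}$ term) and remarks that the exponents not listed in the hypothesis cannot coincide; you have supplied the bookkeeping that justifies that remark, and correctly noted that the example ``$k>n$'' tacitly needs $n\ge 1$.
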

\begin{proof}
  We have
  $$
    N_1^2=1 + 4\cdot 10^{k+1} + 4\cdot 10^{k+n+2} + 4\cdot 10^{2k+2} + 1\cdot 10^{2k+n+ 4} + 4\cdot 10^{2k+2n+4} + 4\cdot 10^{3k+n+4}  + 4\cdot 10^{3k+2n+5} + 1\cdot 10^{4k+2n+6}
  $$
  and
  $$
    N_2^2=4 + 4\cdot 10^{k+1} + 4\cdot 10^{k+n+2} + 1\cdot 10^{2k+2} + 1\cdot 10^{2k+n+4}  + 1\cdot 10^{2k+2n+4} + 4\cdot 10^{3k+n+4}  + 4\cdot 10^{3k+2n+5} + 4\cdot 10^{4k+2n+6}.
  $$
  Note that the pairs of exponents that are not mentioned in the statement cannot coincide.
\end{proof}

The last three examples, before Lemma~\ref{lemma_1_2_construction_for_base_10}, are part of the following $3$-parameter series:
\begin{lemma}
  \label{lemma_half_of_basis_as_digits}
  Given a basis $B$ with $B\equiv 2\pmod 4$ and three integers with $l\ge 1$, $k\ge l+1$, and $n\ge k+2$
  we have
  \begin{eqnarray*}
  &&\big(\big[\tfrac{B}{2},\underset{k\text{ times}}{\underbrace{0,\dots,0}},\tfrac{B}{2},\underset{n\text{ times}}{\underbrace{0,\dots,0}},\tfrac{B}{2},\underset{l\text{ times}}{\underbrace{0,\dots,0}},\tfrac{B}{2}\big]_{B}\big)^2\\ 
  &=&\big[\tfrac{B-2}{4},\tfrac{B}{2},\underset{k-1\text{ times}}{\underbrace{0,\dots,0}},\tfrac{B}{2},\underset{k\text{ times}}{\underbrace{0,\dots,0}},\tfrac{B-2}{4},\tfrac{B}{2},\underset{n-k-2\text{ times}}{\underbrace{0,\dots,0}},\tfrac{B}{2},\underset{l\text{ times}}{\underbrace{0,\dots,0}},\\ 
  &&\tfrac{B}{2},\underset{k-l-1\text{ times}}{\underbrace{0,\dots,0}},\tfrac{B}{2},\underset{l\text{ times}}{\underbrace{0,\dots,0}},
  \tfrac{B}{2},\underset{n-l-1\text{ times}}{\underbrace{0,\dots,0}},\tfrac{B-2}{4},\tfrac{B}{2},\underset{l-1\text{ times}}{\underbrace{0,\dots,0}},\tfrac{B}{2},\underset{l\text{ times}}{\underbrace{0,\dots,0}},\tfrac{B-2}{4},\tfrac{B}{2}\big]_{B},
 \end{eqnarray*}
 i.e., the square consists of the digits $0$, $\tfrac{B-2}{4}$, and $\tfrac{B}{2}$ only.
\end{lemma}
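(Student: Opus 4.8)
The plan is to reduce this to a direct application of Lemma~\ref{lemma_r_s_construction} with a suitably chosen prefix $r$ and suffix $s$, exactly as was done for Lemma~\ref{lemma_1_2_construction_for_base_10}. First I would set
$$
  r=\tfrac{B}{2}\cdot B^{k+1}+\tfrac{B}{2}=\big[\tfrac{B}{2},\underset{k\text{ times}}{\underbrace{0,\dots,0}},\tfrac{B}{2}\big]_B,\qquad
  s=\tfrac{B}{2}\cdot B^{l+1}+\tfrac{B}{2}=\big[\tfrac{B}{2},\underset{l\text{ times}}{\underbrace{0,\dots,0}},\tfrac{B}{2}\big]_B,
$$
so that the number being squared is $N=r\cdot B^{n+l+2}+s$, which matches the left-hand side once one checks the zero-block lengths ($l_B(s)=l+2$, and the gap of $n$ zeros between the $s$-part and the $r$-part). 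Then I would compute $r^2$, $2rs$, and $s^2$ in closed form. Using $\big(\tfrac{B}{2}\big)^2=\tfrac{B^2}{4}=\tfrac{B-2}{4}\cdot B+\tfrac{B}{2}$ (valid since $B\equiv 2\pmod 4$ makes $\tfrac{B-2}{4}$ an integer with $0\le \tfrac{B-2}{4}\le B-1$), one gets
$$
  r^2=\tfrac{B^2}{4}B^{2k+2}+\tfrac{B^2}{2}B^{k+1}+\tfrac{B^2}{4}
     =\big[\tfrac{B-2}{4},\tfrac{B}{2},\underset{k-1\text{ times}}{\underbrace{0,\dots,0}},\tfrac{B}{2},\underset{k\text{ times}}{\underbrace{0,\dots,0}},\tfrac{B-2}{4},\tfrac{B}{2}\big]_B,
$$
where the middle coefficient $\tfrac{B^2}{2}=\tfrac{B}{2}\cdot B$ contributes a single digit $\tfrac{B}{2}$ shifted up by one place (this is why the first zero-block has length $k-1$ and a stray $\tfrac{B}{2}$ appears). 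Similarly $s^2$ has the same shape with $l$ in place of $k$, and $2rs=2\cdot\tfrac{B^2}{4}\big(B^{k+1}+1\big)\big(B^{l+1}+1\big)=\tfrac{B^2}{2}\big(B^{k+l+2}+B^{k+1}+B^{l+1}+1\big)$, which expands (using $\tfrac{B^2}{2}=\tfrac{B}{2}\cdot B$ again, and the hypothesis $k\ge l+1$ so the exponents $k+1$ and $l+1$ are distinct) to the block of $\tfrac{B}{2}$'s seen in the statement.

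Next I would feed these into Lemma~\ref{lemma_r_s_construction}. Its hypothesis requires $n\ge \max\{l_B(2rs),l_B(s^2)\}-l_B(s)$; here $l_B(s^2)=2l+3$, $l_B(2rs)=k+l+3$, and $l_B(s)=l+2$, so the condition becomes $n\ge \max\{2l+3,\,k+l+3\}-(l+2)=\max\{l+1,\,k+1\}=k+1$ (using $k\ge l+1$), which is implied by the stated hypothesis $n\ge k+2$. Plugging in, the lemma writes $N^2$ as $\varphi_B(r^2)$, then $n+l_B(s)-l_B(2rs)=n-k-1$ zeros, then $\varphi_B(2rs)$, then $n+l_B(s)-l_B(s^2)=n-l-1$ zeros, then $\varphi_B(s^2)$. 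Concatenating the three closed-form digit strings computed above with these zero blocks inserted should reproduce, verbatim, the long right-hand side in the statement; I would then just verify the bookkeeping of consecutive zeros (e.g.\ that the trailing $\tfrac{B}{2}$ of $\varphi_B(r^2)$ followed by $n-k-1$ inserted zeros followed by the leading $\tfrac{B-2}{4}$ of $\varphi_B(2rs)$ gives exactly the indicated run), and confirm the only digits appearing are $0$, $\tfrac{B-2}{4}$, and $\tfrac{B}{2}$.

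The main obstacle is purely notational rather than conceptual: matching the three concatenated digit strings against the explicitly written two-line expression on the right-hand side, being careful with the off-by-one lengths of the zero runs (the $k-1$, $n-k-2$, $k-l-1$, $n-l-1$, $l-1$ blocks all arise from the carry $\tfrac{B^2}{4}=\tfrac{B-2}{4}\cdot B+\tfrac{B}{2}$ shifting a $\tfrac{B}{2}$ up by one position, and from adjacent $\tfrac{B}{2}$'s merging). One should also double-check the edge cases forced by the hypotheses $l\ge 1$, $k\ge l+1$, $n\ge k+2$ — these are precisely what guarantee none of the relevant exponents collide and that every written zero-block has nonnegative length (in particular $n-k-2\ge 0$ and $k-l-1\ge 0$). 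No digit can exceed $B-1$: indeed $\tfrac{B-2}{4}<\tfrac{B}{2}<B$, so no further carrying occurs and the expansions are genuinely the base-$B$ representations. Since Lemma~\ref{lemma_r_s_construction} already did all the arithmetic, this final lemma is essentially a corollary once the three small products $r^2$, $2rs$, $s^2$ are written out.
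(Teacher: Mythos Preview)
Your approach is exactly the paper's: apply Lemma~\ref{lemma_r_s_construction} with $r=\tfrac{B}{2}B^{k+1}+\tfrac{B}{2}$ and $s=\tfrac{B}{2}B^{l+1}+\tfrac{B}{2}$, then write out $r^2$, $2rs$, $s^2$ explicitly. Your digit expansion of $r^2$ (and by analogy $s^2$) is correct.

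A few bookkeeping slips to fix. Counting digits in your own expansion of $r^2$ gives $2k+4$, so $l_B(s^2)=2l+4$ and similarly $l_B(2rs)=k+l+4$ (there is a trailing zero in $\varphi_B(2rs)$ since $\tfrac{B^2}{2}=\tfrac{B}{2}\cdot B$ shifts everything up one place; the lowest nonzero digit sits at position~$1$, and the leading digit is $\tfrac{B}{2}$, not $\tfrac{B-2}{4}$). With the correct lengths the two inserted zero-blocks from Lemma~\ref{lemma_r_s_construction} have lengths $n+l_B(s)-l_B(2rs)=n-k-2$ and $n+l_B(s)-l_B(s^2)=n-l-2$, and the hypothesis of that lemma becomes $n\ge k+2$ on the nose rather than $n\ge k+1$. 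After these corrections the concatenation matches the stated right-hand side directly; in particular the trailing zero of $\varphi_B(2rs)$ merges with the $n-l-2$ inserted zeros to give the $n-l-1$ zeros you see in the statement.
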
 
\begin{proof}
  We apply Lemma~\ref{lemma_r_s_construction} with
  \begin{eqnarray*}
    r &=& \tfrac{B}{2}\cdot B^{k+1}+\tfrac{B}{2}=[\tfrac{B}{2},\underset{k\text{ times}}{\underbrace{0,\dots,0}},\tfrac{B}{2}]_B\text{ and} \\ 
    s &=& \tfrac{B}{2}\cdot B^{l+1}+\tfrac{B}{2}=[\tfrac{B}{2},\underset{l\text{ times}}{\underbrace{0,\dots,0}},\tfrac{B}{2}]_B,
  \end{eqnarray*}  
  so that
  \begin{eqnarray*}
    r^2 &=& \tfrac{B^2}{4} \cdot B^{2k+2}+\tfrac{B^2}{2}\cdot B^{k+1}+\tfrac{B^2}{4}=[\tfrac{B-2}{4},\tfrac{B}{2},\underset{k-1\text{ times}}{\underbrace{0,\dots,0}},\tfrac{B}{2},\underset{k\text{ times}}{\underbrace{0,\dots,0}},\tfrac{B-2}{4},\tfrac{B}{2}]_{B},\\ 
    2rs &=& \tfrac{B^2}{2}\cdot B^{l+k+2}+\tfrac{B^2}{2}\cdot B^{k+1}+\tfrac{B^2}{2}\cdot B^{l+1}+\tfrac{B^2}{2}=[\tfrac{B}{2},\underset{l\text{ times}}{\underbrace{0,\dots,0}},\tfrac{B}{2},\underset{k-l-1\text{ times}}{\underbrace{0,\dots,0}},\tfrac{B}{2},\underset{l\text{ times}}{\underbrace{0,\dots,0}},\tfrac{B}{2},0]_{B},\\ 
    s^2 &=& \tfrac{B^2}{4} \cdot B^{2l+2}+\tfrac{B^2}{2}\cdot B^{l+1}+\tfrac{B^2}{4}=[\tfrac{B-2}{4},\tfrac{B}{2},\underset{l-1\text{ times}}{\underbrace{0,\dots,0}},\tfrac{B}{2},\underset{l\text{ times}}{\underbrace{0,\dots,0}},\tfrac{B-2}{4},\tfrac{B}{2}]_{B},\\ 
  \end{eqnarray*}
  $l_B(s)=l+2$, $l_B(s^2)=2l+4$, and $l_B(2rs)=l+k+4$.
\end{proof}

The sequence of Lemma~\ref{lemma_zero_blocks_and_ones_1}, Lemma~\ref{lemma_zero_blocks_and_ones_2}, and Lemma~\ref{lemma_zero_blocks_and_ones_3}, as well as the corollaries 
from  Lemma~\ref{lemma_r_s_construction}  choosing $r=s=[2]_B$ or $r=[2]_B$, $s=[2,2]_B$, are part of a rather 
general parametric construction. To this end we say that $A=\{a_0,\dots,a_n\}$ is a (finite) \emph{Sidon set}\footnote{Some authors also speak of \emph{Sidon sequences}. 
Note that finite Sidon sets are in one-to-one correspondence to  Golomb rulers.} iff the pairwise sums $a_i+a_j$ (for $i\le j$) are all different, i.e., 
$\left|\{a_i+a_j\,:\, 0\le i\le j\le n\}\right|={{n+2}\choose 2}$.
\begin{theorem}
  \label{thm_square_digits_and_double_square_digits_everywhere}
  Let $A=\left\{a_0,\dots,a_n\right\}$ be a non-empty Sidon set, $u\in \mathbb{N}_{\ge 1}$, and $B\in\mathbb{N}$ with $B\ge 2u^2+1$. Then, the square of 
  $$
    N=\sum_{i=0}^n u\cdot B^{a_i} 
  $$
  consists of digits in $\{0,u^2,2u^2\}$ in base $B$.
\end{theorem}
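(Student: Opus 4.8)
The plan is to expand $N^2$ directly, use the Sidon property to show that no two of the resulting monomials $B^{a_i+a_j}$ share an exponent, and then observe that the coefficients of these monomials are small enough to be genuine base-$B$ digits, so that no carries occur and the expansion is already the base-$B$ representation of $N^2$.

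First I would compute
\[
  N^2 = u^2\left(\sum_{i=0}^n B^{a_i}\right)^{\!2} = u^2\sum_{i=0}^n\sum_{j=0}^n B^{a_i+a_j} = \sum_{i=0}^n u^2 B^{2a_i} + \sum_{0\le i<j\le n} 2u^2 B^{a_i+a_j}.
\]
Then I would invoke the defining property of the Sidon set $A$: the sums $a_i+a_j$ with $i\le j$ are pairwise distinct. Hence in the last expression each exponent of the form $2a_i$ occurs exactly once, each exponent $a_i+a_j$ with $i<j$ occurs exactly once, and no exponent $2a_i$ equals any $a_k+a_l$ with $k<l$; in other words all exponents appearing are pairwise distinct.

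The key step is the carry-free observation. Every coefficient in the displayed sum is either $u^2$ or $2u^2$, and the hypothesis $B\ge 2u^2+1$ gives $0\le u^2<2u^2\le B-1$, so each coefficient lies in $\{0,1,\dots,B-1\}$. Combined with the pairwise distinctness of the exponents, this means: if we set $b_{2a_i}:=u^2$, $b_{a_i+a_j}:=2u^2$ for $i<j$, and $b_m:=0$ for every remaining index $m$ with $0\le m\le 2\max_i a_i$, then $N^2=\sum_m b_m B^m$ with all $b_m\in\{0,\dots,B-1\}$ and $b_{2\max_i a_i}=u^2\neq 0$. By uniqueness of the base-$B$ representation this is $\varphi_B(N^2)$, so $D_B(N^2)\subseteq\{0,u^2,2u^2\}$, which is the claim.

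The only point needing care is this carry-free claim, and it is exactly where both hypotheses enter: the Sidon condition guarantees that distinct index pairs land in distinct digit positions, while the bound $B\ge 2u^2+1$ guarantees that each of the few possible contributions $u^2$ and $2u^2$ is itself already a legal digit, so the monomials never interact. Everything else is a one-line expansion. (One could add the remark that for $|A|\ge 2$ all three values $0$, $u^2$, $2u^2$ actually occur, since $u\ge1$ forces $u^2\neq 2u^2$, whereas for a singleton Sidon set only $0$ and $u^2$ appear; this is not needed for the statement as phrased.)
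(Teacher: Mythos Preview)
Your proof is correct and follows essentially the same approach as the paper's: expand $N^2$, observe that the Sidon property makes all exponents $a_i+a_j$ (with $i\le j$) distinct, and use $B\ge 2u^2+1$ to conclude that the coefficients $u^2$ and $2u^2$ are valid base-$B$ digits so no carries occur. Your version is simply more detailed and explicit about the carry-free step than the paper's terse two-line argument.
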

\begin{proof}
  We have
  $$
    N^2=\sum_{i=0}^n u^2\cdot B^{2a_i}\,+\,\sum_{0\le i<j\le n} 2u^2\cdot B^{a_i+a_j}.
  $$
  Since $A$ is a Sidon set, the exponents of $B$ are pairwise different, so that $1\le u^2<2u^2<B$ implies the stated result.
\end{proof}
The condition that $N$ is not divisible by $B$ is equivalent to $0\in A$. Note that \url{http://www.worldofnumbers.com/threedigits.htm} 
contains five parametric series of integers $N$ consisting of zeros and twos such that $N^2$ has base-$10$ digits contained in $\{0,4,8\}$. Four of these patterns consist 
of three $2$s and one of two $2$s, i.e.,
$$
  \big([2,\underset{n\text{ times}}{\underbrace{0,\dots,0}},2]_B\big)^2=[4,\underset{n\text{ times}}{\underbrace{0,\dots,0}},8,\underset{n\text{ times}}{\underbrace{0,\dots,0}},4]_B,
$$
where $n\ge 0$. While the latter pattern is exhaustive, the smallest missing example with three $2$'s is given by
$$
 \big([2,0,0,0,2,0,2]_{B}\big)^2=[4,0,0,0,8,0,8,0,4,0,8,0,4]_B
$$
for $B\ge 9$ and corresponds to the choice $u=2$ and $A=\{0,2,6\}$ in Theorem~\ref{thm_square_digits_and_double_square_digits_everywhere}.\footnote{At \url{www.asahi-net.or.jp/~KC2H-MSM/mathland/math02/math0203.htm} under no.~1 and no.~18  
patterns are mentioned that may hint to Theorem~\ref{thm_square_digits_and_double_square_digits_everywhere} for $u=1$ and $u=2$, respectively. In the first case the description is 
{\lq\lq}uppermost and lowermost digit is $1$, and properly arranged $0$, $1${\rq\rq}.} From Lemma~\ref{lemma_r_s_construction} 
it can be obtained choosing $r=[2]_B$ and $s=[2,0,2]_B$. However, patterns for $\{0,1,2\}$ are in one-to-one correspondence to patterns for $\{0,4,8\}$. 
\begin{lemma}
  \label{lemma_multiplied_triples}
  Let $0<a<b$ and $s>1$ be integers such that $bs^2<B$. Then, we have
  \begin{enumerate}
    \item[(1)] If $N$ is a positive integer such that $D_B(N^2)=\{0,a,b\}$, then $sN$ is a positive integer with $D_B((sN)^2)=\{0,as^2,bs^2\}$.  
    \item[(2)] If $N$ is a positive integer such that $D_B(N^2)=\{0,s^2a,s^2b\}$ and $s$ is squarefree, then $N/s$ is a positive integer with 
               $D_B((N/s)^2)=\{0,a,b\}$.
  \end{enumerate}  
\end{lemma}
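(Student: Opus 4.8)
The statement concerns multiplying, resp.\ dividing, an integer $N$ by a constant $s$ and tracking what happens to the digit set of the square. The key observation is that $(sN)^2 = s^2\cdot N^2$, so everything reduces to understanding how the base-$B$ digits of an integer $M$ with $D_B(M)=\{0,a,b\}$ transform when $M$ is multiplied by the single-digit-range factor $s^2$, under the hypothesis $bs^2<B$.

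For part (1), I would write $N^2 = \sum_{i} d_i B^i$ where each $d_i\in\{0,a,b\}$. Then $(sN)^2 = s^2 N^2 = \sum_i (s^2 d_i) B^i$, and since every coefficient $s^2 d_i$ lies in $\{0,s^2a,s^2b\}\subseteq\{0,1,\dots,B-1\}$ by the hypothesis $bs^2<B$ (note $s^2a<s^2b<B$ and all are nonnegative integers), this is already a valid base-$B$ representation: no carries occur. Hence $D_B((sN)^2)\subseteq\{0,s^2a,s^2b\}$. For the reverse inclusion — that all three digits actually appear — I would argue that multiplication by the nonzero constant $s^2$ is injective on coefficients, so a digit value $v\in\{0,a,b\}$ occurring in $N^2$ forces $s^2v$ to occur in $(sN)^2$; since $0,a,b$ are distinct and all three occur in $N^2$ (as $D_B(N^2)=\{0,a,b\}$), all three of $0,s^2a,s^2b$ occur. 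One should also note $sN$ is a positive integer, which is immediate. This part is essentially bookkeeping.

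For part (2), the hypotheses are $D_B(N^2)=\{0,s^2a,s^2b\}$ with $s$ squarefree and $bs^2<B$. First, $s\mid N$: since $s$ is squarefree, $s\mid N^2$ implies $s\mid N$ (standard); and $s\mid N^2$ follows because $s^2$ divides every digit of $N^2$ — indeed each digit is $0$, $s^2a$, or $s^2b$ — so writing $N^2=\sum_i d_i B^i$ with $d_i\in\{0,s^2a,s^2b\}$ gives $s^2\mid N^2$ termwise, hence $s\mid N$. Then $N/s$ is a positive integer and $(N/s)^2 = N^2/s^2 = \sum_i (d_i/s^2)B^i$ with each coefficient $d_i/s^2\in\{0,a,b\}\subseteq\{0,\dots,B-1\}$ (using $b<bs^2<B$), so again this is a genuine base-$B$ representation with no carrying, and the same injectivity argument as in part (1) gives $D_B((N/s)^2)=\{0,a,b\}$ exactly.

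The only mild subtlety — the ``main obstacle'', such as it is — is making sure one never needs a carry argument: this is exactly what the bound $bs^2<B$ buys in part (1) and what $bs^2<B$ together with squarefreeness buys in part (2). Squarefreeness is needed only for the implication $s\mid N^2\Rightarrow s\mid N$; without it one could only conclude that $N/s$ need not be an integer (e.g.\ $s=4$, $N=2$). I would state this deduction explicitly, perhaps as the one genuine number-theoretic input, and otherwise the proof is a direct computation on digit sequences.
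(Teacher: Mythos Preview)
Your proof is correct and follows essentially the same approach as the paper: write $N^2$ in its base-$B$ expansion, multiply (resp.\ divide) each digit by $s^2$, and use the bound $bs^2<B$ to ensure no carries arise, with the divisibility $s\mid N$ in part~(2) coming from the fact that every digit of $N^2$ is a multiple of $s^2$. Your write-up is in fact somewhat more careful than the paper's sketch; note also that since you actually establish $s^2\mid N^2$, the conclusion $s\mid N$ follows for \emph{any} $s$, so the squarefreeness hypothesis is not strictly necessary for the argument you give (though it suffices for the weaker route via $s\mid N^2$).
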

\begin{proof}
  For part (1) we write $N^2$ uniquely as 
  \begin{equation}
    N^2 =\sum_{i=1}^l c_i\cdot B^{n_i}  
  \end{equation}  
  with $0\le n_1<n_2<\dots<n_l$ and $c_i\in\{a,b\}$ for all $1\le i\le l$. With this we have 
  $$
    (sN)^2 =\sum_{i=1}^l (s^2c_i)\cdot B^{n_i}.  
  $$ 
  For part (2) we observe that all digits as well as $B$ are divisible by $s$, so that $N^2$ is divisible by $s$. Since $s$ is squarefree, also $N$ is divisible by $s$. With this, we 
  can continue similar as for part (1).
\end{proof}

While it is rather easy to construct Sidon sets of arbitrary cardinality, take e.g.\ 
$A=\left\{2^0,2^1,\dots,2^n\right\}$, it is a hard and open problem to determine the minimum possible value of $\max\{a\in A\}$ if $A$ has to be a Sidon set of a given 
cardinality. For {\lq\lq}length{\rq\rq} $n=\max\{a\in A\}=6$ the smallest Sidon set has cardinality four and is given by $A=\{0,1,4,6\}$. In our context this leads to
$$
 \big([2,0,2,0,0,2,2]_{B}\big)^2=[4,0,8,0,4,8,8,8,8,0,4,8,4]_B
$$
for $B\ge 9$. The sequence of the smallest possible lengths per cardinality of the Sidon set is given in \url{https://oeis.org/A003022} (using the relation to Golomb rulers). 
See also \url{https://blogs.distributed.net/2014/02/} for the current state of knowledge.

In order to generalize Lemma~\ref{lemma_half_of_basis_as_digits} we need the following generalization of a Sidon set:\footnote{Other generalizations in the literature consider the 
situation where the number of solutions of $a_i+a_j=m$ with $i\le j$ is upper bounded by some constant that may be larger than $1$ and/or sums with more summands are considered.}
\begin{definition}
  For a positive integer $t$ a set $A=\left\{a_0,\dots,a_n\right\}$ is called a $t$-distance Sidon set if 
  $$
    \Big|\left(a_i+a_j\right)-\left(a_u+a_v\right)\Big| \ge t 
  $$
  for all $0\le i\le j\le n$ and $0\le u\le v\le n$ with $(i,j)\neq (u,v)$.
\end{definition}
An example of a $2$-distance Sidon set with cardinality four is given by $A=\{0,2,8,12\}$. $1$-distance Sidon sets are just Sidon sets and from a $t$-distance Sidon set $A$ 
we can construct an $rt$-distance set $rA:=\{r\cdot a\,:\,a\in A\}$ for every positive integer $r$.
\begin{theorem}
  \label{thm_half_of_basis_as_digits}
  Let $A=\left\{a_0,\dots,a_n\right\}$ be a non-empty $2$-distance Sidon set and $B\in \mathbb{N}_{\ge 1}$ with $B\equiv 0\pmod 2$. Then, the square of 
  $$
    N=\sum_{i=0}^n \frac{B}{2}\cdot B^{a_i} 
  $$
  consists of digits in $\left\{0,\tfrac{B-2}{4},\tfrac{B}{2}\right\}$ in base $B$ if $B\equiv 2\pmod 4$.\footnote{At \url{www.asahi-net.or.jp/~KC2H-MSM/mathland/math02/math0203.htm} under no.~15 the case $B=10$ is adressed. The description is 
{\lq\lq}uppermost and lowermost digit is $5$, and properly arranged $0$, $5$.{\rq\rq}.} 
\end{theorem}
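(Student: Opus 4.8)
The plan is to follow the proof of Theorem~\ref{thm_square_digits_and_double_square_digits_everywhere}, but to additionally track the single carry that each oversized coefficient produces. Writing $N=\tfrac{B}{2}\sum_{i=0}^{n}B^{a_i}$ and squaring gives
$$
  N^2=\sum_{i=0}^{n}\frac{B^2}{4}\cdot B^{2a_i}\;+\;\sum_{0\le i<j\le n}\frac{B^2}{2}\cdot B^{a_i+a_j}.
$$
Since $B\equiv 2\pmod 4$, both $\tfrac{B}{2}$ and $\tfrac{B-2}{4}$ are integers in $\{0,\dots,B-1\}$, and one checks the digit identities $\tfrac{B^2}{4}=\tfrac{B-2}{4}\cdot B+\tfrac{B}{2}$ and $\tfrac{B^2}{2}=\tfrac{B}{2}\cdot B$. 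Substituting them and regrouping the powers of $B$ yields
$$
  N^2=\sum_{i=0}^{n}\frac{B}{2}\cdot B^{2a_i}\;+\;\sum_{i=0}^{n}\frac{B-2}{4}\cdot B^{2a_i+1}\;+\;\sum_{0\le i<j\le n}\frac{B}{2}\cdot B^{a_i+a_j+1}.
$$
Thus it suffices to show that the exponents appearing here, grouped as $E_1=\{2a_i:0\le i\le n\}$, $E_2=\{2a_i+1:0\le i\le n\}$ and $E_3=\{a_i+a_j+1:0\le i<j\le n\}$, are pairwise disjoint and that each $E_k$ is free of repetitions; for then the displayed expression is already a base-$B$-representation of $N^2$ (no carrying occurs, because every coefficient lies in $\{0,\dots,B-1\}$), so that $D_B(N^2)\subseteq\{0,\tfrac{B-2}{4},\tfrac{B}{2}\}$.

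Next I would carry out the distinctness check, organised by parity and by the Sidon hypothesis. Repetitions inside $E_1$ or inside $E_2$ are impossible since the $a_i$ are distinct, and $E_1\cap E_2=\emptyset$ by parity. That $E_3$ has no repetitions, and that $E_2\cap E_3=\emptyset$, follow from the ordinary Sidon property (which is implied by the $2$-distance hypothesis): an equality $a_i+a_j=a_u+a_v$ with $i\le j$, $u\le v$ forces $\{i,j\}=\{u,v\}$, applied once to pairs $\{i,j\},\{u,v\}$ with $i<j$, $u<v$ and once to $\{i,i\}$ against a pair $\{u,v\}$ with $u<v$. The remaining collision $E_1\cap E_3=\emptyset$ is the one \emph{not} ruled out by an ordinary Sidon set: $2a_i=a_u+a_v+1$ says $\bigl|2a_i-(a_u+a_v)\bigr|=1$ with $(i,i)\neq(u,v)$ (as $u<v$), which is exactly what being a $2$-distance Sidon set forbids. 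Hence every exponent occurs once, with digit $\tfrac{B}{2}$ or $\tfrac{B-2}{4}$, and the theorem follows.

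I expect the main obstacle — really the conceptual point that motivates the definition — to be exactly this last case: squaring a single digit $\tfrac{B}{2}$ sitting at position $a$ deposits a nonzero digit at \emph{both} positions $2a$ and $2a+1$, whereas a cross term lands at position $a_u+a_v+1$, so the generalization of Theorem~\ref{thm_square_digits_and_double_square_digits_everywhere} goes through only after the pairwise sums of $A$ are separated by a gap of at least $2$. I would also devote a sentence to the hypotheses: the congruence $B\equiv 2\pmod 4$ (not merely $B$ even) is precisely what makes $\tfrac{B-2}{4}$ an integer, $N$ is coprime to $B$ iff $0\in A$, and specialising $A$ recovers Lemma~\ref{lemma_half_of_basis_as_digits}; the connection to Lemma~\ref{lemma_r_s_construction} and the scaling trick of Lemma~\ref{lemma_multiplied_triples} could be mentioned as well.
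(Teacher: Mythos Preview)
Your proof is correct and follows essentially the same route as the paper: expand $N^2$, rewrite $\tfrac{B^2}{4}$ and $\tfrac{B^2}{2}$ in base $B$, and then argue that the $2$-distance Sidon hypothesis prevents any interference among the resulting digit positions---the paper phrases this last step as ``the exponents of $B$ are at pairwise distance at least two, so the two-digit numbers $\tfrac{B^2}{4}$ and $\tfrac{B^2}{2}$ do not interfere,'' whereas you unpack it into the explicit case analysis $E_1,E_2,E_3$. One small slip in your closing remarks: ``$N$ is coprime to $B$ iff $0\in A$'' should read ``$N$ is not divisible by $B$ iff $0\in A$,'' since for $B=10$ the last digit $5$ already shares a factor with $B$.
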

\begin{proof}
  We have
  $$
    N^2=\sum_{i=0}^n \frac{B^2}{4}\cdot B^{2a_i}\,+\,\sum_{0\le i<j\le n} \frac{B^2}{2}\cdot B^{a_i+a_j}.
  $$
  Since $A$ is a $2$-distance Sidon set, the exponents of $B$ are at pairwise distance at least two, the to-digit numbers $\tfrac{B^2}{4}$ and $\tfrac{B^2}{2}$ do not interfere. 
  For $B\equiv 2\pmod 4$ we have 
  $$
    \frac{B^2}{4}=\frac{B-2}{4}\cdot B+\frac{B}{2}.
  $$
  Additionally we have 
  $$
    \frac{B^2}{2}=\frac{B}{2}\cdot B+0.
  $$  
\end{proof}  
Our $2$-distance Sidon set $A=\{0,2,8,12\}$ gives the example
$$
  \big([5000500000505]_{10}\big)^2=[25005000255050505000255025]_{10}.
$$

Another way to apply Lemma~\ref{lemma_r_s_construction} is to choose as parameters $r$ and $s$ expressions that are obtained by applying Lemma~\ref{lemma_r_s_construction} 
to $\left(r_1,s_1\right)$ and $\left(r_2,s_2\right)$. As an example we will consider $\left(r_1,s_1\right)=\left(r_2,s_2\right)=(1,1)$. In both cases we can choose a 
suitable number $n$ for the length of the block of zeros, where \textit{the} $n$ may be different. So, let us choose the parameterization $r=1\cdot B^{k+1}+1$ and 
$s=1\cdot B^{l+1}+1$. By construction it is clear that both $r^2$ and $s^2$ consist of digits in $\{0,1,2\}$ only. So, it remains to check $2rs$, where we compute
$$
  2rs=2\cdot\left(1\cdot B^{k+1}+1\right)\cdot\left(1\cdot B^{l+1}+1\right)=2\cdot B^{k+l+2}+2B^{k+1}+2B^{l+1}+2.
$$   
So, if $l\neq k$ and $B\ge 3$, then $2rs$ consists of digits in $\{0,2\}$ only. For $l=k$ and $B\in\{3,4\}$ the number $2rs$ consists of digits in $\{0,1,2\}$ only. 
The first case is covered by Theorem~\ref{thm_square_digits_and_double_square_digits_everywhere}, while the second is a bit special since it is valid for $B\in\{3,4\}$ only. 
For $u>1$ in Theorem~\ref{thm_square_digits_and_double_square_digits_everywhere} we can also take $l=k$ if $B\in\{2u^2-1,2u^2\}$.\\ 
Choosing $\left((r_1,s_1\right)=(1,2)$ and $\left((r_1,s_1\right)=(2,1)$, we end up with Lemma~\ref{lemma_1_2_construction_for_base_10} using
$$
  2\cdot\left(1\cdot B^{k+1}+2\right)\cdot\left(2\cdot B^{l+1}+1\right)=4\cdot B^{k+l+2}+ 2\cdot B^{k+1}+8\cdot B^{l+1}+4.
$$   
Since the digits $\{0,1,4\}$ occur anyway, we have to choose $l=k$ and $B\in\{2,3,5,6,9,10\}$, so that $10\operatorname{rem} B\in\{0,1,4\}$, or $B=4$ with digits in $\{0,1,2\}$.\\ 
While choosing $r=1$ and $s=2$ works in any basis $B\ge 5$, we can choose $r=1$ and $s=v$ for all $v\ge 3$ and $B\in\{v^2-2v,v^2-1\}$. Of course, the roles of $r$ and $s$ 
can be interchanged.\\ 
Choosing $\left(r_1,s_1\right)=(1,B-1)$ and $\left(r_2,s_2\right)=(B-1,1)$ leads to 
\begin{eqnarray*}
  &&2\cdot\left(1\cdot B^{k+1}+(B-1)\right)\cdot\left((B-1)\cdot B^{l+1}+1\right)\\ &=&B^{k+l+3}+(B-2)\cdot B^{k+l+2}+
  \left(2B^{k+1}+2B^{l+3}-4B^{l+2}+2B^{l+1}\right)
  +1\cdot B+(B-2),
\end{eqnarray*}
so that the question remains for which $k$, $l$ and $B$ the term 
$$
  2B^{k+1}+2B^{l+3}-4B^{l+2}+2B^{l+1}
$$
has only digits in $\{0,1,B-2\}$ in base-$B$ representation. If $B\le 3$ then we cannot have more than three different digits, so that we assume 
$B\ge 4$ in the following. If $B=4$ and $k=l+2$, then the digit $3$ occurs while $B-2=2\neq 3$. All other cases with $B=4$ and $k\neq l+2$ 
the occurring digits are $\{0,1,2\}$. So, we assume $B\ge 5$ in the following. If $k>l+2$ or $k<l$, then the occurring non-zero digits are given by 
$\{0,1,2,B-2,B-4\}$ which has cardinality at least four for $B\ge 5$. If $k=l$, then the occurring non-zero digits are given by 
$\{0,1,4,B-2,B-4\}$ which has cardinality at least four for $B\ge 5$. If $k=l+1$, then the occurring non-zero digits are given by 
$\{0,1,2,3,B-2,B-4\}$ which has cardinality at least four for $B\ge 4$. If $k=l+1$, then the occurring non-zero digits are given by 
$\{0,1,2,B-2,B-4\}$ which has cardinality at least four for $B\ge 5$.
 
Choosing $\left(r_1,s_1\right)=\left(r_2,s_2\right)=\left(\tfrac{B}{2},\tfrac{B}{2}\right)$ is covered by Theorem~\ref{thm_half_of_basis_as_digits}.

Of course, Lemma~\ref{lemma_r_s_construction} can be generalized to two blocks of zero of length $n$ each
$$
  \big(\big[[\varphi_B(r),\underset{n\text{ times}}{\underbrace{0,\dots,0}},\varphi_B(s),\underset{n\text{ times}}{\underbrace{0,\dots,0}},\varphi_B(t)\big]_B\big)^2
$$
or $l$ blocks of zero of length $n$ each
$$
  \big(\big[[\varphi_B(r_0),\underset{n\text{ times}}{\underbrace{0,\dots,0}},\varphi_B(r_1),\dots,\underset{n\text{ times}}{\underbrace{0,\dots,0}},\varphi_B(r_l)\big]_B\big)^2.
$$
We can try something slightly more general for non-negative integers $a_0,\dots,a_l$ and positive integers $r_0,\dots,r_l$ we set 
$$
  N=\sum_{i=0}^l r_i\cdot B^{a_i}, 
$$
so that 
$$
  N^2=\sum_{i=0}^l r_i^2\cdot B^{2a_i}\,+\, \sum_{0\le i<j\le l} 2r_ir_j\cdot B^{a_i+a_j}.
$$  
Setting
$$
  t=\max\!\left\{l_B(r_i^2),l_B(2r_ir_j)\,:\, 0\le i\le l, i<j\le l\right\}
$$
we can choose $A:=\left\{a_0,\dots,a_l\right\}$ such that $A$ is a $t$-distance Sidon set. If $D_B(r_i^2)\subseteq D$ for all $0\le i\le l$, 
$D_B(2r_ir_j)\subseteq D$ for all $0\le i<j\le n$, and $0\in D$, then we have $D_B(N^2)\subseteq D$ for some subset $D\subseteq \{0,1,\dots,B-1\}$. Obviously, the conditions are more likely 
to be satisfied if several of the $r_i$ coincide. While this is a nice general construction strategy, we have to remark that the only known cases with $|A|\ge 3$ where it can be 
applied satisfy $r_0=r_1=\dots=r_l$ and indeed are covered by Theorem~\ref{thm_square_digits_and_double_square_digits_everywhere} and Theorem~\ref{thm_half_of_basis_as_digits}. Having 
such a general patter for all $t$-distance Sidon sets $A$ of a given cardinality is maybe too ambitious if $|A|\ge 3$. Another approach is to choose 
$a_i=i\cdot n+c_i$ for a parameter $n$ and constants $c_0,\dots,c_l\in \mathbb{Z}$. This way the ${{l+2}\choose 2}$ terms $r_i^2\cdot B^{2a_i}$ and $2r_ir_j\cdot B^{a_i+a_j}$ clearly 
interfere in a base-$B$ representation. However, the $r_0,\dots,r_l$ and the $c_0,\dots,c_l$ might be chosen in such a way that we will find patterns with just two non-zero digits. It 
is very likely that such pattern will be valid for specific bases only. We want to start to study the situation for $l=2$ in more detail.
\begin{lemma}
  \label{lemma_two_blocks_of_zeros}
  Let $B\in\mathbb{N}_{\ge 2}$, $r_0,r_1,r_2\in \mathbb{N}_{>0}$, $c\in\mathbb{Z}$, $D\subseteq\{0,1,\dots,B-1\}$ with $0\in D$, and
  $$
    N:=r_2\cdot B^{2n+c}+r_1\cdot B^n+r_0  
  $$
  for some integer $n$. If $D_B(r_2^2)\subseteq D$, $D_B(2r_1r_2)\subseteq D$, $D_B(2r_1r_0)\subseteq D$, $D_B(r_0^2)\subseteq D$, and $D_B(2r_0r_2\cdot B^c+r_1^2)\subseteq D$ 
  (if $c\ge 0$) or $D_B(2r_0r_2+r_1^2\cdot B^{-c})\subseteq D$ (if $c\le 0$), then we have $D_B(N^2)\subseteq D$ for all
  \begin{equation}
    n\ge \max\!\left\{l_B(r_0^2),l_B(2r_1r_0)-c,l_B(2r_0r_2+r_1^2B^{-c})-c,l_B(2r_1r_2)-c\right\}
  \end{equation}
  if $c\le 0$ and
  \begin{equation}
    n\ge \max\!\left\{l_B(r_0^2),l_B(2r_1r_0),l_B(2r_0r_2B^c+r_1^2)-c,l_B(2r_1r_2)-c\right\}
  \end{equation}
  if $c\ge 0$.
\end{lemma}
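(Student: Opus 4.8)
The plan is to expand $N^2$ directly and then group the resulting five terms according to the power of $B$ they contribute to, exactly as in the proof of Lemma~\ref{lemma_r_s_construction}, but now keeping track of the ``middle'' overlap caused by the shift $c$. Writing $N=r_2B^{2n+c}+r_1B^n+r_0$ we get
\begin{equation}
  N^2=r_0^2+2r_1r_0B^n+\left(2r_0r_2B^c+r_1^2\right)B^{2n}+2r_1r_2B^{3n+c}+r_2^2B^{4n+2c}.
\end{equation}
The idea is that, under the stated lower bounds on $n$, the base-$B$ digit strings of the five coefficients $r_0^2$, $2r_1r_0$, $2r_0r_2B^c+r_1^2$ (resp.\ $2r_0r_2+r_1^2B^{-c}$ when $c\le 0$), $2r_1r_2$, and $r_2^2$ occupy pairwise disjoint blocks of digit positions, so that no carries propagate from one block into the next. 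Consequently $\varphi_B(N^2)$ is literally the concatenation (with zero padding in between) of $\varphi_B(r_2^2)$, $\varphi_B(2r_1r_2)$, the middle coefficient's representation, $\varphi_B(2r_1r_0)$, and $\varphi_B(r_0^2)$, whence $D_B(N^2)$ is the union of the digit sets of these five numbers, which is contained in $D$ by hypothesis. The case split on the sign of $c$ is only there to make sense of the middle coefficient $2r_0r_2B^c+r_1^2$ as an honest non-negative integer: if $c\ge 0$ it is already one, and if $c\le 0$ we instead factor $B^c$ out of the whole $B^{2n}$ term and absorb it, so the middle block sits at positions starting from $2n+c$ rather than $2n$.

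First I would fix the positions. The term $r_0^2$ occupies digit positions $0,\dots,l_B(r_0^2)-1$. The term $2r_1r_0B^n$ occupies positions $n,\dots,n+l_B(2r_1r_0)-1$, so disjointness from the previous block needs $n\ge l_B(r_0^2)$. Next, the middle term: when $c\ge 0$ it is $\left(2r_0r_2B^c+r_1^2\right)B^{2n}$, occupying positions $2n,\dots,2n+l_B(2r_0r_2B^c+r_1^2)-1$; disjointness from the $2r_1r_0$ block needs $n\ge l_B(2r_1r_0)$. When $c\le 0$ the middle term is $\left(2r_0r_2+r_1^2B^{-c}\right)B^{2n+c}$, occupying positions $2n+c,\dots$, and disjointness from the $2r_1r_0$ block needs $n+c\ge n+l_B(2r_1r_0)-1$, i.e.\ the condition appears as $n\ge l_B(2r_1r_0)-c$ — which matches the statement. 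Then the $2r_1r_2B^{3n+c}$ term starts at position $3n+c$, so disjointness from the middle block needs $3n+c\ge 2n+l_B(\text{middle})$ (in the $c\ge 0$ case) or $3n+c\ge 2n+c+l_B(\text{middle})$ (in the $c\le 0$ case), giving $n\ge l_B(2r_0r_2B^c+r_1^2)-c$ resp.\ $n\ge l_B(2r_0r_2+r_1^2B^{-c})-c$. Finally the top term $r_2^2B^{4n+2c}$ starts at $4n+2c$, and disjointness from the $2r_1r_2$ block needs $4n+2c\ge 3n+c+l_B(2r_1r_2)$, i.e.\ $n\ge l_B(2r_1r_2)-c$. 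Collecting these four inequalities in each case reproduces exactly the two displayed lower bounds on $n$.

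The main obstacle — really the only thing that needs care — is making the ``disjoint blocks $\Rightarrow$ no carries $\Rightarrow$ digit set is the union'' step rigorous rather than waving at it. The clean way is the following little lemma, which I would state and prove once: if $m=\sum_{t=1}^{p}q_tB^{e_t}$ with $0\le q_t$ and $e_t+l_B(q_t)\le e_{t+1}$ for all $t$ (and $q_t\ge 1$, say, or just handle $q_t=0$ as contributing only the digit $0$), then $\varphi_B(m)$ is the concatenation of the $\varphi_B(q_t)$ separated by blocks of $e_{t+1}-e_t-l_B(q_t)$ zeros, so $D_B(m)=\bigcup_t D_B(q_t)\cup\{0\}$ (the extra $0$ only if some padding block is nonempty or some $q_t$ has an internal zero). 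This is immediate by induction on $p$: the lowest block contributes digits $0,\dots,e_2-1$ of $m$ and these agree with $q_1$ padded by zeros since $m\equiv q_1B^{e_1}\pmod{B^{e_2}}$ and $q_1B^{e_1}<B^{e_2}$, and then $m\operatorname{div}B^{e_2}=\sum_{t\ge 2}q_tB^{e_t-e_2}$ falls under the induction hypothesis. With that lemma in hand the proof is just: verify the four (resp.\ four) gap inequalities hold under the hypothesis on $n$, apply the lemma to the five-term expansion of $N^2$, and read off $D_B(N^2)\subseteq D$ using $0\in D$ together with the five containment hypotheses $D_B(r_0^2),D_B(2r_1r_0),D_B(\text{middle}),D_B(2r_1r_2),D_B(r_2^2)\subseteq D$. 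A minor point to double-check is that the five blocks really are listed in increasing order of starting position, which is what the monotonicity $e_t+l_B(q_t)\le e_{t+1}$ encodes and which is exactly guaranteed by the four inequalities; no additional ordering assumption on the $r_i$ is needed.
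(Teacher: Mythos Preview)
Your proof is correct and follows exactly the paper's approach: the paper simply writes out $N^2=r_2^2B^{4n+2c}+2r_1r_2B^{3n+c}+(2r_0r_2B^c+r_1^2)B^{2n}+2r_1r_0B^n+r_0^2$ and asserts that for $n$ as large as stated ``the terms before the powers of $B$ do not interfere in base-$B$ representation,'' whereas you spell out the gap inequalities and the concatenation lemma explicitly. Two minor arithmetic slips (the intermediate inequality ``$n+c\ge n+l_B(2r_1r_0)-1$'' should read $2n+c\ge n+l_B(2r_1r_0)$, and in the $c\le 0$ case the third gap actually gives $n\ge l_B(2r_0r_2+r_1^2B^{-c})$ rather than $n\ge l_B(2r_0r_2+r_1^2B^{-c})-c$) do not affect correctness, since the stated hypothesis on $n$ is at least as strong.
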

\begin{proof}
  We compute 
  $$
    N^2=r_2^2\cdot B^{4n+2c}+2r_1r_2\cdot B^{3n+c}+\left(2r_0r_2B^c+r_1^2\right)\cdot B^{2n}+2r_1r_0\cdot B^n+r_0^2.
  $$
  If $n$ is at least as large as requested, then the terms before the powers of $B$ do not interfere in base-$B$ representation.
\end{proof}

Below we give some examples for the application of Lemma~\ref{lemma_two_blocks_of_zeros}:
\begin{itemize}
  \item[(1)] $r_0=r_2=1$, $r_1=3$, $c=0$, $D=\{0,1,6\}$ for $n\ge 2$ and $B\in\{5,6,10,11\}$ (for $B=5$ we need $n\ge 3$ and have $D=\{0,1,2\}$; for $B=6$ we have $B\in\{0,1,5\}$);\\[-9mm]
  \item[(2)] $r_0=r_2=1$, $r_1=8$, $c=0$, $D=\{0,1,6\}$ for $n\ge 2$ and $B\in\{4,8,10,50,65,66\}$ (for $B\in\{50,65,66\}$ we have $D=\{0,1,16\}$; or $B\in\{4,8\}$ we 
             have $D=\{0,1,2\}$, where for $B=4$ we need $n\ge 3$);\\[-9mm]
  \item[(3)] $r_0=r_2=1$, $r_1=4$, $c=0$, $D=\{0,1,8\}$ for $n\ge 2$ and $B\in\{4,5,8,10,17,18\}$ (for $B\in\{4,8\}$ we have $D=\{0,1,2\}$ and for $B=5$ we have $D=\{0,1,3\}$);\\[-9mm]           
  \item[(4)] $r_0=r_2=1$, $r_1=9$, $c=1$, $D=\{0,1,8\}$ for $n\ge 2$ and $B\in\{4,5,9,10\}$ (for $B\in\{4,9\}$ we have $D=\{0,1,2\}$ and for $B=5$ we have $D=\{0,1,3\}$);
  \item[(5)] $r_0=r_2=2$, $r_1=6$, $c=0$, $D=\{0,2,4\}$ for $n\ge 2$ and $B\in\{10,11\}$;
  \item[(6)] $r_0=r_2=2$, $r_1=1$, $c=0$, $D=\{0,4,9\}$ for $n\ge 1$ and $B\ge 8$ or $B\in\{4,5\}$ (for $B\in\{5,8,9\}$ we have $D=\{0,1,4\}$ and for $B=4$ we have $D=\{0,1,2\}$);
  \item[(7)] $r_0=r_2=4$, $r_1=127$, $c=0$, $D=\{0,1,6\}$ for $n\ge 5$ and $B=10$;
  \item[(8)] $r_0=r_2=8$, $r_1=254$, $c=0$, $D=\{0,4,6\}$ for $n\ge 5$ and $B=10$;
  \item[(9)] $r_0=r_2=15$, $r_1=85$, $c=2$, $D=\{0,2,5\}$ for $n\ge 4$ and $B=10$.
\end{itemize}

Looking at the cases (1)-(4) we might look at the parametric case $r_0=r_2=1$, $r_1=x$, and $c=0$. Here we have $\{0,1\}\subseteq D$ and the remaining digit is determined by 
the base $B$-representation of $2x$ and $2+x^2$. If $B>2x$, then we have the possibilities $B\in\{x^2-2x+2,x^2+1,x^2+2\}$, which gives $x\in\{3,4\}$ for $B=10$. For 
$x<B\le 2B$ we can perform similar computations and obtain the solution $x=B+1-\sqrt{B-1}$ if $B-1$ is a square. If $|c|$ is sufficiently large, then the digits 
of $2r_0r_2B^c+r_1^2$ are given by the digits of $2r_0r_2$ and $r_1^2$, i.e. $D=\{0,1,2\}$ and $r_1^2$ has to consist of digits in $\{0,1,2\}$. Clearly we can choose $r_1=1$, but 
then the situation is covered by Theorem~\ref{thm_square_digits_and_double_square_digits_everywhere}.

The parameters of e.g.\ case (9) look a bit \textit{sporadic} so that the question arises who we can algorithmically find those cases for a given base $B\ge 4$:
\begin{itemize}
  \item[Step 1] Compute a list/set $L$ of positive integers $M$ such that $\left|D_B(M^2)\backslash\{0\}\right|\le 2$. 
  \item[Step 2] Loop over all $r_0,r_2\in L$ with $\left|\left(D_B(r_0^2)\cup D_B(r_1^2)\right)\backslash\{0\}\right|\le 2$. Set $D:=D_B(r_0^2)\cup D_B(r_1^2)\cup\{0\}$.
  \item[Step 3] Compute a list/set $L'$ of positive integers $r_1$ such that $r_1^2$ consists of digits in $D$ in base $B$-representation except for 
                a sequence of consecutive digits of length at most $l_B(2r_0r_2)+1$.
  \item[Step 4] Loop over all $c\in\mathbb{Z}$ with $|c|\le l_B(2r_0r_2)+1$ and all $r_1\in L'$. Check whether
                $$
                  D_B\!\left(
                  \left(2r_0r_2B^c+r_1^2\right)\cdot B^{\max\{0,-c\}}
                  \right)\subseteq D
                $$              
                and eventually output the parameters.
\end{itemize}    
We remark that so far all found examples satisfy $r_0=r_2$. While the presented technique is in principle applicable for $l\ge 3$ blocks of zeros of roughly length $n$, 
we have not found such an example that is not covered by the construction using $t$-distance Sidon sets, i.e., where the blocks all have lengths in terms of pairwise 
different parameters (modulo some constraints). 

\medskip

For base $B=10$ and $D=\{0,1,2\}$ another, so far unknown,\footnote{At \url{www.asahi-net.or.jp/~KC2H-MSM/mathland/math02/math0203.htm} under no.~2 some examples of the discussed 
type are listed. The description is {\lq\lq}numbers of $0$s between $1$ and $1$, and $1$ and $4$ is an odd number{\rq\rq}. However, we e.g.\ have $1010000010004000100000101^2 =
1020100020208080302080222020802030808020200010201$.} infinite pattern can be constructed. To this end let us consider the ansatz
\begin{equation}
  N=4\cdot 10^x+\sum_{i=0}^l 10^{n_i}
\end{equation} 
for some integers $x$ and $0=n_0<n_1<\dots<n_l$, so that
\begin{equation}
  N^2 = \sum_{i=0}^l 10^{2n_i} \,+\, \sum_{0\le i<j\le l} 2\cdot 10^{n_i+n_j} \,+\, \sum_{i=0}^l 8\cdot 10^{x+n_i}\,+\, 16\cdot 10^{2x}.  
\end{equation}   
In it easy to see that digits $0$, $1$, and $2$ will occur, so that we need to get rid of the terms with multipliers $8$ and $16$. Since 
$8+2=10$ and $16+2\cdot 2=20$, the idea is to choose $x$ and the $n_i$ in such a way that there exist indices $a_{i,1}$, $a_{i,2}$ with 
$x+n_i=n_{a_{i,1}}+n_{a_{i,2}}$ for $0\le i\le l$ and two different additional solutions of $2x=n_{i'}+n_{j'}$ for suitable indices 
$0\le i',j'\le l$. For $l=5$ an example is given by $x + n_0 = n_1 + n_2$, $x+n_1= n_0 + n_3$, $x+n_2= n_0 + n_4$, $x + n_3 = n_1 + n_5$, 
$x+n_4 = n_2 + n_5$, $x+n_5 =n_3+n_4$, and $2x = n_0 + n_5=n_1 + n_4$, where $n_0=0$. Solving the corresponding equation system gives a $2$-parameter 
solution. It remains to ensure that the exponents of the remaining terms are pairwise different.
\begin{lemma}
  \label{lemma_many_ones_1_four}
  Let $1\le n_1<n_2$ be integers, $n_0=0$, $n_3=2n_1+n_2$, $n_4=n_1+2n_2$, $n_5=2n_1+2n_2$, $x=n_1 + n_2$, and
\begin{equation}
  N=4\cdot 10^x+\sum_{i=0}^5 10^{n_i}.
\end{equation} 
If $0$, $n_1$, $2n_1$, $1+n_1+n_2$, $1+2n_1+n_2$, $3n_1+n_2$, $2n_2$, $1+n_1+2n_2$, $2n_1+2n_2$, $1+2n_1+2n_2$, $1+3n_1+2n_2$, $4n_1+2n_2$,  
$n_1+3n_2$, $1+2n_1+3n_2$, $1+3n_1+3n_2$, $4n_1+3n_2$, $2n_1+4n_2$, $3n_1+4n_2$, and $4n_1+4n_2$ are pairwise different integers, the $N^2$ contains 
only the digits $0$, $1$, and $2$ in base $10$. This condition is e.g.\ satisfied for $n_1\ge 2$, $n_2\ge 3n_1+1$.   
\end{lemma}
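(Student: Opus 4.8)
The plan is to expand $N^2$ completely, collect equal powers of $10$, resolve the (few and non‑cascading) carries, and then read off the base‑$10$ digits using the distinctness hypothesis. First I would write $N=4\cdot 10^x+S$ with $S=\sum_{i=0}^5 10^{n_i}$, so that
$$
  N^2=16\cdot 10^{2x}+8\cdot 10^x S+S^2
     =16\cdot 10^{2x}+\sum_{i=0}^5 8\cdot 10^{\,x+n_i}+\sum_{i=0}^5 10^{\,2n_i}+\sum_{0\le i<j\le 5}2\cdot 10^{\,n_i+n_j}.
$$
Substituting $x=n_1+n_2$, $n_3=2n_1+n_2$, $n_4=n_1+2n_2$, $n_5=2n_1+2n_2$ makes every exponent a linear form $an_1+bn_2$. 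The point of the construction is that each of the six exponents $x+n_i$ equals one of the double‑product exponents $n_j+n_k$ --- one checks $x+n_0=n_1+n_2$, $x+n_1=n_0+n_3$, $x+n_2=n_0+n_4$, $x+n_3=n_1+n_5$, $x+n_4=n_2+n_5$, $x+n_5=n_3+n_4$ --- and that $2x$ equals three of them, $n_0+n_5=n_1+n_4=n_2+n_3$.

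Collecting like powers, I would record $N^2=\sum_e\gamma_e 10^e$ with $\gamma_e=8+2=10$ for $e\in\{n_1+n_2,\,2n_1+n_2,\,n_1+2n_2,\,3n_1+2n_2,\,2n_1+3n_2,\,3n_1+3n_2\}$, $\gamma_e=16+3\cdot 2=22$ for $e=2n_1+2n_2$, $\gamma_e=1$ for the six ``square'' exponents $0,2n_1,2n_2,4n_1+2n_2,2n_1+4n_2,4n_1+4n_2$, and $\gamma_e=2$ for the six surviving double‑product exponents $n_1,n_2,3n_1+n_2,n_1+3n_2,4n_1+3n_2,3n_1+4n_2$. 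Rewriting $10\cdot 10^e=10^{e+1}$ and $22\cdot 10^e=2\cdot 10^e+2\cdot 10^{e+1}$, and noting that no carry exceeds $2<10$ so that nothing cascades further, $N^2$ becomes $\sum_{e\in A_1}10^e+\sum_{e\in A_2}2\cdot 10^e$, where $A_1$ consists of the six square exponents together with the six shifted exponents $1+n_1+n_2,\,1+2n_1+n_2,\,1+n_1+2n_2,\,1+3n_1+2n_2,\,1+2n_1+3n_2,\,1+3n_1+3n_2$, and $A_2$ consists of the six surviving double‑product exponents together with $2n_1+2n_2$ and $1+2n_1+2n_2$. These are exactly the numbers listed in the statement (to which one should add $n_2$, the exponent of the term $2\cdot 10^{\,n_0+n_2}$). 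Consequently, as soon as the elements of $A_1\cup A_2$ are pairwise distinct, this expression is literally the base‑$10$ representation of $N^2$, with every digit in $\{0,1,2\}$, which settles the first claim.

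For the sufficiency of $n_1\ge 2$ and $n_2\ge 3n_1+1$, I would write each exponent in $A_1\cup A_2$ as $an_1+bn_2+c$ with $0\le a,b\le 4$ and $c\in\{0,1\}$. If two of them coincide, then $(a-a')n_1+(b-b')n_2=c'-c$; assuming $b\ge b'$, the case $b=b'$ forces $|a-a'|n_1=|c'-c|\le 1<n_1$, hence $a=a'$ and $c=c'$; the case $b-b'\ge 2$ is impossible since then $(b-b')n_2\ge 2(3n_1+1)=6n_1+2$ while $(b-b')n_2=(a'-a)n_1+(c'-c)\le 4n_1+1$; and the case $b-b'=1$ gives $n_2=(a'-a)n_1+(c'-c)$, which together with $n_2\ge 3n_1+1$ and $0\le a'\le 4$ forces $a'-a\in\{3,4\}$. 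Going through the list, the only two pairs of exponents whose $n_1$‑coefficients differ by $3$ or $4$ while their $n_2$‑coefficients differ by $1$ are $\{3n_1+n_2,\,2n_2\}$ and $\{4n_1+2n_2,\,n_1+3n_2\}$, and either pair would be equal only if $n_2=3n_1$, which is forbidden by $n_2\ge 3n_1+1$. Hence all the exponents are distinct.

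I expect the main obstacle to be bookkeeping rather than ideas: correctly matching, among the $\binom{6}{2}=15$ double products and the six cross terms $8\cdot 10^{x+n_i}$, which terms merge with which and into which power of $10$, and then keeping the short list of carries organized. The algebraic identities among the $x+n_i$, $n_j+n_k$ and $2x$ are the only genuinely needed idea; the final distinctness check, although it splits into several cases, is routine once the exponents are written as $an_1+bn_2+c$ and sorted by $b$, then $a$, then $c$.
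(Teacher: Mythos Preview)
Your approach is essentially the same as the paper's: compute $N^2$, collapse the coefficients $10$ and $22$, and read off the digits once the exponents are known to be pairwise distinct. The paper's proof consists of a single displayed formula for $N^2$ and nothing more; it neither explains why the cross terms combine so neatly nor verifies the sufficient condition $n_1\ge 2$, $n_2\ge 3n_1+1$. Your writeup supplies both, and your case analysis for the distinctness (sorting by the $n_2$--coefficient and bounding $(a'-a)n_1+(c'-c)$) is clean and correct; in particular, your identification of the only two dangerous pairs $\{3n_1+n_2,2n_2\}$ and $\{4n_1+2n_2,n_1+3n_2\}$, both forcing $n_2=3n_1$, checks out against the full list.

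You are also right that the exponent $n_2$ (from $2\cdot 10^{\,n_0+n_2}$) is missing from the statement's distinctness list: the paper's own displayed $N^2$ contains the term $2\cdot 10^{n_2}$, so $n_2$ must be among the pairwise distinct integers as well (e.g.\ $n_2=2n_1$ would produce a digit $3$). This is a small omission in the statement that your argument corrects.
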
  
\begin{proof}
  We compute 
  \begin{eqnarray*}
    N^2 &=& 1 + 2\cdot 10^{3n_1+n_2} + 10^{4n_1 + 4n_2} + 22\cdot 10^{2n_1+2n_2}+10^{1+n_1+2n_2}+10^{1+2n_1+n_2}+2\cdot 10^{3n_1+4n_2} \\ 
         && + 2\cdot 10^{4n_1 + 3n_2} + 10^{1+3n_1+3n_2} + 10^{1+2n_1+3n_2} + 10^{1+3n_1+2n_2}+10^{2n_1+4n_2} + 2\cdot 10^{n_1+3n_2} \\  
         &&   + 10^{4n_1 + 2n_2} + 10^{2n_2} + 10^{2n_1} + 2\cdot 10^{n_1} + 2\cdot 10^{n_2} + 10^{1+n_1+n_2}.
  \end{eqnarray*} 
\end{proof}
The number of occurring ones in the base $10$-representation of $N^2$ is $12$ and the number of occurring twos is $8$. An example is given by $\left(n_1,n_2\right)=(2,7)$, i.e.,
$$
  \left([1010000104010000101]_{10}\right)^2=[1020100210100211022100121010020010201]_{10}.
$$
We remark that also other pairs $\left(n_1,n_2\right)$, violating $n_2\ge 3n_1+1$, satisfy the condition on the pairwise different exponents in Lemma~\ref{lemma_many_ones_1_four}. 
Examples are e.g.\ given by $(3,8)$, $(4,6)$, $(4,10)$, $(4,11)$, $(5,7)$, $(5,8)$, $(5,12)$, $(5,13)$, and $(5,14)$.

Actually, Lemma~\ref{lemma_many_ones_1_four} is just an instance of the more general idea outlined before and the question arises if more such results can be found. We remark 
that this is impossible if $l\le 3$. However, we conjecture that for all $l\ge 5$ such patterns can be found. 

\subsubsection{{\lq\lq}Intricate{\rq\rq} patterns}
\label{subsec_intricate_patterns}
A very interesting type of patterns found by the Iranian mathematician Farideh Firoozbakht (1962--2019) are so called {\lq\lq}intricate{\rq\rq} patterns, c.f.\  
\begin{center}\url{http://www.worldofnumbers.com/threedigits.htm}.\end{center} As a shorthand notation we write $\langle a_1',\dots,a_l'\rangle$ for
$$
  4 \underset{a_1'\text{times}}{\underbrace{9\dots9}} 4 \underset{a_2'\text{times}}{\underbrace{9\dots9}}\dots 4 \underset{a_l'\text{times}}{\underbrace{9\dots9}}5, 
$$
where the $a_i'$ are non-negative integers. Only sum choice lead numbers $N$ with $D_{10}(N^2)\subseteq\{0,2,5\}$. Examples are e.g.\ given by $\langle 0\rangle$, $\langle 0,1\rangle$, 
$\langle 0,1,3\rangle$, $\langle 0,1,3,7\rangle$, $\langle 0,1,3,7,15\rangle$, \dots. A few more irregular such examples are known and described parametrically. 
However, examples as e.g.\ $\langle 0,1,3,2,10\rangle$ and $\langle 0,1,3,2,4,9\rangle$ were unknown so far. The question arises for which (finite) sequences $a_i'$ we end up 
with a square consisting of digits in $\{0,2,5\}$. To this end we introduce  a different representation. Let $a_1,\dots,a_l$ be pairwise different positive integers and
\begin{equation}
  \label{eq_intricate_representation}
  N:=\sum_{i=1}^l \left(5\cdot 10^{a_i}-10^{a_{i+1}+1}\right)\,+\, 5,
\end{equation}
where $a_{l+1}:=0$. For $\left(a_1,a_2,a_3\right)=(7,6,4)$ we have $N=44949995$ and $N^2=2020502050500025$. 
We can easily convert a sequence $a_1,a_2,\dots,a_l$ with $a_1>a_2>\dots>a_l>0$ to a representation $\langle a_1',\dots,a_l'\rangle$ and vice versa:
\begin{equation}
  a_i=\sum_{j=i}^la_j'\,+\, \left(l-i+1\right)
\end{equation}   
and
\begin{equation}
  a_i'=a_i-a_{i+1}-1
\end{equation}
for all $1\le i\le l$, where we set $a_{l+1}=0$. Next we evaluate and simplify the expression for $N^2$, when $N$ is given as specified in 
Equation~(\ref{eq_intricate_representation}), in terms of the $a_i$ for small values of $l$:
\begin{eqnarray*}
  N^2 &=& 25\cdot 10^{2a_1}  + 25 - 5\cdot 10^{a_1+1}\\ 
  N^2 &=& 25\cdot 10^{2a_1}+25\cdot 10^{2a_2}+25  - 5\cdot 10^{a_1 + a_2+1}  - 5\cdot 10^{a_1+1}  + 5\cdot 10^{a_2+1}\\ 
  N^2 &=& 25\cdot 10^{2a_1}+25\cdot 10^{2a_2}+25\cdot 10^{2a_3}+25-5\cdot 10^{a_1+a_2+1} -5\cdot 10^{a_1+a_3+1}-5\cdot 10^{a_1+1} \\ 
  && +5\cdot 10^{a_2+a_3+1} +5\cdot 10^{a_2+1}+5\cdot 10^{a_3+1} \\ 
  N^2 &=& 25\cdot 10^{2a_1}+25\cdot 10^{2a_2}+25\cdot 10^{2a_3}+25\cdot 10^{2a_3}+25\\ 
  && -5\cdot 10^{a_1+a_2+1} -5\cdot 10^{a_1+a_3+1}-5\cdot 10^{a_1+a_4+1}-5\cdot 10^{a_1+1} \\ 
  && +5\cdot 10^{a_2+a_3+1} +5\cdot 10^{a_2+a_4+1}+5\cdot 10^{a_3+a_4+1}+5\cdot 10^{a_2+1}+5\cdot 10^{a_3+1}+5\cdot 10^{a_4+1} 
\end{eqnarray*}  
Since we only want to use the digits in $\{0,2,5\}$ the terms with a {\lq\lq}$-5${\rq\rq}-coefficient have to cancel out by a suitable choice of the $a_i$. W.l.o.g.\ we 
assume $a_1> a_2> \dots> a_l$. For $l=1$ we need $2a_1=a_1+1$, so that $a_1=1$, $N=45$, and $N^2=2025$. For $l=2$ the ordering of the $a_1>a_2$ forces the cancellation 
given by $2a_1=a_1+a_2+1$, i.e, $a_2=a_1-1$. Since $a_1+1>a_2+1$ and $a_1+1>0$, we also need $2a_2=a_1+1$, so that $a_1=3$, $a_2=2$, $N=4495$, and $N^2=20205025$. For $l=3$, 
choosing $2a_1 = a_1 + a_2 + 1$, $2a_2 = a_1 + a_3 + 1$, and $2a_3 = a_1 + 1$ yields the unique solution $a_1=7$, $a_2=6$, $a_3=4$ leading to $N=44949995$ and 
$N^2=2020502050500025$. In general we have
\begin{equation}
  N^2=\sum_{i=1}^l 25\cdot 10^{2a_i}\,+\,25-\sum_{i=2}^l 5\cdot 10^{a_1+a_i+1}\,- 5\cdot 10^{a_1+1}\,+\,\sum_{2\le i<j\le l} 5\cdot 10^{a_i+a_j+1}\,+\, 
  \sum_{i=2}^l 5\cdot 10^{a_i+1}.
\end{equation}
For larger $l$ there are several possibilities to match the {\lq\lq}-5{\rq\rq}-exponents with the {\lq\lq}+25{\rq\rq}- or {\lq\lq}+5{\rq\rq}-exponents. The choice 
$2a_i=a_1+a_{i+1}+1$ for $1\le i\le -1$ and $2a_l=a_1+1$ leads to $a_i=2^l-2^{i-1}$ for all $1\le i\le l$. It can be easily check that the remaining $2$s and $5$s do not 
interfere.   

In the remaining part we return to the $\langle a_1',\dots,a_l'\rangle$-notation again. As mentioned, we have just one possibility for each $l$ with $1\le l\le 3$, 
i.e., $\langle 0\rangle$, $\langle 0,1\rangle$, and $\langle 0,1,3\rangle$.  For $l=4$ we have the two solutions $\langle 0,1,3,7\rangle$ and $\langle 0,1,3,2\rangle$ 
(or equivalently $15,14,12,8$ and $10,9,7,3$). Let us denote the number of solutions by $\beta_l$. From computer searches we know $\beta_1=1$, $\beta_2=1$, $\beta_3=1$, 
$\beta_4\ge 2$, $\beta_5\ge 5$, $\beta_7\ge 40$, and $\beta_8\ge 125$.
\begin{itemize}
  \item $l=5$: {\footnotesize $\langle 0,1,3,2,4\rangle$, $\langle 0,1,3,2,10\rangle$, $\langle 0,1,3,7,2\rangle$, $\langle 0,1,3,7,6\rangle$, $\langle 0,1,3,7,15\rangle$;}\\[-8mm]
  \item $l=6$: {\footnotesize $\langle 0,1,3,2,4,9\rangle$, $\langle 0,1,3,2,4,15\rangle$, $\langle 0,1,3,2,10,9\rangle$, $\langle 0,1,3,2,10,21\rangle$, $\langle 0,1,3,7,2,6\rangle$, 
               $\langle 0,1,3,7,2,12\rangle$, $\langle 0,1,3,7,2,18\rangle$, $\langle 0,1,3,7,6,2\rangle$, $\langle 0,1,3,7,6,8\rangle$, $\langle 0,1,3,7 6,22\rangle$, 
               $\langle 0,1,3,7,15,2\rangle$, $\langle 0,1,3,7,15,6\rangle$, $\langle 0,1,3,7,15,14\rangle$, $\langle 0,1,3,7,15,31\rangle$;}\\[-8mm] 
  \item $l=7$: {\footnotesize $\langle 0,1,3,2,4,9,25\rangle$, $\langle 0,1,3,2,4,15,9\rangle$, $\langle 0,1,3,2,4,15,14\rangle$, $\langle 0,1,3,2,4,15,31\rangle$, 
               $\langle 0,1,3,2,10,9,11\rangle$, $\langle 0,1,3,2,10,9,31\rangle$, $\langle 0,1,3,2,10,21,9\rangle$, $\langle 0,1,3,2,10,21,20\rangle$, $\langle 0,1,3,2,10,21,43\rangle$, 
               $\langle 0,1,3,7,2,6,25\rangle$, $\langle 0,1,3,7,2,12,6\rangle$, $\langle 0,1,3,7,2,12,17\rangle$, $\langle 0,1,3,7,2,12,31\rangle$, $\langle 0,1,3,7,2,18,6\rangle$, 
               $\langle 0,1,3,7,2,18,17\rangle$, $\langle 0,1,3,7,2,18,37\rangle$, $\langle 0,1,3,7,6,2,19\rangle$, $\langle 0,1,3,7,6,2,25\rangle$, $\langle 0,1,3,7,6,8,21\rangle$, 
               $\langle 0,1,3,7,6,8,31\rangle$, $\langle 0,1,3,7,6,22,2\rangle$, $\langle 0,1,3,7,6,22,21\rangle$, $\langle 0,1,3,7,6,22,45\rangle$, $\langle 0,1,3,7,15,2,6\rangle$, 
               $\langle 0,1,3,7,15,2,14\rangle$, $\langle 0,1,3,7,15,2,28\rangle$, $\langle 0,1,3,7,15,2,34\rangle$, $\langle 0,1,3,7,15,6,2\rangle$, $\langle 0,1,3,7,15,6,14\rangle$, 
               $\langle 0,1,3,7,15,6,24\rangle$, $\langle 0,1,3,7,15,6,38\rangle$, $\langle 0,1,3,7,15,14,2\rangle$, $\langle 0,1,3,7,15,14,6\rangle$, $\langle 0,1,3,7,15,14,16\rangle$, 
               $\langle 0,1,3,7,15,14,46\rangle$, $\langle 0,1,3,7,15,31,2\rangle$, $\langle 0,1,3,7,15,31,6\rangle$, $\langle 0,1,3,7,15,31,14\rangle$, $\langle 0,1,3,7,15,31,30\rangle$, 
               $\langle 0,1,3,7,15,31,63\rangle$.} 
\end{itemize}
A first step into an understanding of the possible solutions might be:
\begin{conjecture}
  If $\langle a_1',\dots,a_l'\rangle$ is a solution with $l\ge 2$, then $\langle a_1',\dots,a_{l-1}'\rangle$ is also a solution.
\end{conjecture}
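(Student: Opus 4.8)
The plan is to turn "delete the last block'' into a single algebraic identity and then to attack a carry‑propagation problem. First I would record how the two notations interact: writing $m:=a_l'+1\ge 1$ and letting $N$, $\tilde N$ denote the numbers coded by $\langle a_1',\dots,a_l'\rangle$ and $\langle a_1',\dots,a_{l-1}'\rangle$, the conversion formulas $a_i=\sum_{j=i}^{l}a_j'+(l+1-i)$ give $a_l=m$ and show that the $a$‑values of $\tilde N$ are exactly $a_i-m$ for $1\le i\le l-1$. Substituting this into the representation~(\ref{eq_intricate_representation}) for $\tilde N$ and comparing term by term with~(\ref{eq_intricate_representation}) for $N$ collapses to the clean identity $N=10^{m}\tilde N-5$, hence $N+5=10^{m}\tilde N$ and $(N+5)^2=10^{2m}\tilde N^2$. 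Since $m\ge 1$, $(N+5)^2$ has the same nonzero digits as $\tilde N^2$, so $D_{10}(\tilde N^2)\subseteq\{0,2,5\}$ iff $D_{10}\big((N+5)^2\big)\subseteq\{0,2,5\}$, and the conjecture is equivalent to
\[
  D_{10}(N^2)\subseteq\{0,2,5\}\quad\Longrightarrow\quad D_{10}\big(N^2+10N+25\big)\subseteq\{0,2,5\}.
\]

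\textbf{Making the perturbation explicit.} Next I would unpack $10N+25$. Because $N=\langle a_1',\dots,a_l'\rangle$ ends in the digit $5$ and all of its remaining digits are $4$ or $9$, one has $10N+25=100\cdot(N\operatorname{div}10)+75$, i.e.\ $10N+25$ has decimal expansion $[\varphi_{10}(N\operatorname{div}10),7,5]$, with digits in $\{4,5,7,9\}$. On the other side, the displayed expansion of $N^2$ above the statement writes $N^2$ as a signed sum of terms $25\cdot10^{e}$ with $e\in\{0\}\cup\{2a_i\}$, $-5\cdot10^{e}$ with $e\in\{a_1+1\}\cup\{a_1+a_i+1\}$, and $+5\cdot10^{e}$ with $e\in\{a_i+1\}\cup\{a_i+a_j+1\}$; the hypothesis that $N$ is a solution means precisely that after all carries and borrows this sum shows only the digits $0,2,5$, which forces a very rigid pattern of coincidences among those exponents. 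The idea is then to add $10N+25$ to $N^2$ (equivalently, to analyse the subtraction $N^2=10^{2m}\tilde N^2-(10N+25)$) and to check that each new carry it produces again only converts a local configuration (a "$\dots8\dots$'', a "$\dots10\dots$'', etc.) into digits of $\{0,2,5\}$, using the solution structure to know which digits of $N^2$ sit where. Above the position where the carry chain of $10N+25$ terminates, the digits of $N^2$ literally coincide (after the shift by $2m$) with the top digits of $\tilde N^2$, so those are already in $\{0,2,5\}$ for free, and all the real work is confined to the lower range.

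\textbf{The obstacle.} The hard part, and the reason this is only a conjecture, is that the carry chain caused by $10N+25$ does \emph{not} localise to a bounded tail: already for $N=\langle 0,1,3,2\rangle$ one checks that subtracting $10N+25$ from $10^{2m}\tilde N^2$ triggers a run of borrows that sweeps well into the interior of the digit string of $\tilde N^2$ (through its internal zero blocks), rather than dying out after a few positions. Hence one cannot simply split $N^2$ into "a shift of $\tilde N^2$'' plus "junk''; instead one must show that wherever this chain runs, the constraint $D_{10}(N^2)\subseteq\{0,2,5\}$ still pins the affected digits of $\tilde N^2$ down to $\{0,2,5\}$. Doing that seems to require exact control of which of the exponents $2a_i$, $a_i+1$, $a_i+a_j+1$, $a_1+1$, $a_1+a_i+1$ coincide for an \emph{arbitrary} solution, i.e.\ a genuine structural classification of the admissible sequences $\langle a_1',\dots,a_l'\rangle$ — which is exactly what is not yet available. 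I would therefore expect a full proof to come bundled with such a classification. A realistic intermediate target is to prove the conjecture under the extra assumptions $a_1'=0$ and $a_2'=1$ (met by every known solution), where the leading digits are rigid enough that the carry bookkeeping of the second step can plausibly be closed by induction on $l$.
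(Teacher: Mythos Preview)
The paper offers \emph{no} proof of this statement: it is explicitly labelled a conjecture, with only the computational evidence for small $l$ listed above it. So there is nothing in the paper to compare your argument against.

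That said, your reduction is correct and is a genuine contribution beyond what the paper records. The identity $N=10^{m}\tilde N-5$ with $m=a_l'+1$ follows exactly as you say from the conversion $a_i=\sum_{j\ge i}a_j'+(l-i+1)$ and the representation~(\ref{eq_intricate_representation}); I checked it term by term. The equivalence
\[
  D_{10}(\tilde N^2)\subseteq\{0,2,5\}\ \Longleftrightarrow\ D_{10}\big(N^2+10N+25\big)\subseteq\{0,2,5\}
\]
is then immediate, and your description of the digits of $10N+25$ is accurate. This reformulation is cleaner than anything in the paper and is exactly the kind of first step one would want.

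You are also right about where the difficulty lies. Adding $10N+25$ to $N^2$ produces a borrow/carry chain that can run through long interior zero blocks of $N^2$, so the perturbation is not local, and controlling it appears to require knowing \emph{which} exponent coincidences among $2a_i$, $a_i+a_j+1$, $a_1+a_i+1$, $a_i+1$ actually occur for an arbitrary solution --- i.e.\ a structural classification the paper does not possess. Your suggested intermediate target (assume $a_1'=0$, $a_2'=1$ and induct on $l$) is reasonable; the paper's data are consistent with both assumptions, but even under them the induction step is not obviously tractable. In short: your write-up is an honest and accurate assessment of a conjecture that remains open, with a useful reduction the paper does not contain.
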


We remark that we can generalize Equation~(\ref{eq_intricate_representation}) 
\begin{equation}
  N:=\sum_{i=1}^l \left(\frac{B}{2}\cdot B^{a_i}-B^{a_{i+1}+1}\right)\,+\, \frac{B}{2}.
\end{equation}
For all mentioned examples $\langle a_1',\dots,a_l'\rangle$ the square of the corresponding integer only consists of digits in base $B$ 
in $\{0,\tfrac{B}{2},\tfrac{B-2}{4}\}$ if $B\equiv 2\pmod 4$ and $B>2$. 
   
\begin{table}[htp!]
  \begin{center}
    \begin{tabular}{cccc}
      \hline
      pattern & condition & $D$ & explanation \\ 
      \hline
      $1(0)_n1$ & $n\ge 0$ & $\{0,1,2\}$ & Theorem~\ref{thm_square_digits_and_double_square_digits_everywhere} with $u=1$\\
      $1(0)_n1(0)_{n+1}1$ & $n\ge 0$ & $\{0,1,2\}$ & Theorem~\ref{thm_square_digits_and_double_square_digits_everywhere} with $u=1$\\
      $1(0)_{n+1}1(0)_n1$ & $n\ge 0$ & $\{0,1,2\}$ & Theorem~\ref{thm_square_digits_and_double_square_digits_everywhere} with $u=1$\\
      $1(0)_n11$ & $n\ge 1$ & $\{0,1,2\}$ & Theorem~\ref{thm_square_digits_and_double_square_digits_everywhere} with $u=1$\\
      $11(0)_n1$ & $n\ge 1$ & $\{0,1,2\}$ & Theorem~\ref{thm_square_digits_and_double_square_digits_everywhere} with $u=1$\\
      $1(0)_n2$ & $n\ge 0$ & $\{0,1,4\}$ & Lemma~\ref{lemma_r_s_construction} with $r=1$, $s=2$\\
      $2(0)_n1$ & $n\ge 0$ & $\{0,1,4\}$ & Lemma~\ref{lemma_r_s_construction} with $r=2$, $s=1$\\
      $102(0)_n201$ & $n\ge 2$ & $\{0,1,4\}$ & Lemma~\ref{lemma_1_2_construction_for_base_10} with $k=1$\\
      $201(0)_n102$ & $n\ge 2$ & $\{0,1,4\}$ & Lemma~\ref{lemma_1_2_construction_for_base_10} with $k=1$\\
      $1(0)_n202(0)_n1$ & $n\ge 2$ & $\{0,1,4\}$ & Lemma~\ref{lemma_1_2_construction_for_base_10_gen} (with $n=1$, $k=n$)\\ 
      $2(0)_n101(0)_n2$ & $n\ge 2$ & $\{0,1,4\}$ & Lemma~\ref{lemma_1_2_construction_for_base_10_gen} (with $n=1$, $k=n$)\\
      $1(0)_n3(0)_n1$ & $n\ge 1$ & $\{0,1,6\}$ & Lemma~\ref{lemma_two_blocks_of_zeros} with $\left(r_0,r_1,r_2;c\right)=(1,3,1;0)$\\
      $1(0)_n8(0)_n1$ & $n\ge 1$ & $\{0,1,6\}$ & Lemma~\ref{lemma_two_blocks_of_zeros} with $\left(r_0,r_1,r_2;c\right)=(1,8,1;0)$\\
      $4(0)_n127(0)_{n+2}4$ & $n\ge 2$ & $\{0,1,6\}$ & Lemma~\ref{lemma_two_blocks_of_zeros} with $\left(r_0,r_1,r_2;c\right)=(4,127,4;0)$\\
      $9(0)_n1$ & $n\ge 1$ & $\{0,1,8\}$ & Lemma~\ref{lemma_r_s_construction} with $r=B-1$, $s=1$\\
      $1(0)_n9$ & $n\ge 1$ & $\{0,1,8\}$ & Lemma~\ref{lemma_r_s_construction} with $r=1$, $s=B-1$\\
      $1(0)_n4(0)_n1$ & $n\ge 1$ & $\{0,1,8\}$ & Lemma~\ref{lemma_two_blocks_of_zeros} with $\left(r_0,r_1,r_2;c\right)=(1,4,1;0)$\\
      $1(0)_n9(0)_n1$ & $n\ge 1$ & $\{0,1,8\}$ & Lemma~\ref{lemma_two_blocks_of_zeros} with $\left(r_0,r_1,r_2;c\right)=(1,9,1;1)$\\
      $2(0)_n6(0)_n2$ & $n\ge 1$ & $\{0,2,4\}$ & Lemma~\ref{lemma_two_blocks_of_zeros} with $\left(r_0,r_1,r_2;c\right)=(2,6,2;1)$\\
      $5(0)_n5$ & $n\ge 1$ & $\{0,2,5\}$ & Theorem~\ref{thm_half_of_basis_as_digits} \\
      $5(0)_n505$ & $n\ge 2$ & $\{0,2,5\}$ & Theorem~\ref{thm_half_of_basis_as_digits} \\
      $505(0)_n5$ & $n\ge 3$ & $\{0,2,5\}$ & Theorem~\ref{thm_half_of_basis_as_digits} \\
      $15(0)_n85(0)_n5$ & $n\ge 2$ & $\{0,2,5\}$ & Lemma~\ref{lemma_two_blocks_of_zeros} with $\left(r_0,r_1,r_2;c\right)=(15,85,15;2)$\\
      $8(0)_n254(0)_{n+2}8$ & $n\ge 2$ & $\{0,4,6\}$ & Lemma~\ref{lemma_two_blocks_of_zeros} with $\left(r_0,r_1,r_2;c\right)=(8,254,8;0)$\\
      $2(0)_n2$ & $n\ge 0$ & $\{0,4,8\}$ & Theorem~\ref{thm_square_digits_and_double_square_digits_everywhere} with $u=2$\\
      $2(0)_n2(0)_{n+1}2$ & $n\ge 0$ & $\{0,4,8\}$ & Theorem~\ref{thm_square_digits_and_double_square_digits_everywhere} with $u=2$\\
      $2(0)_{n+1}2(0)_n2$ & $n\ge 0$ & $\{0,4,8\}$ & Theorem~\ref{thm_square_digits_and_double_square_digits_everywhere} with $u=2$\\
      $2(0)_n22$ & $n\ge 1$ & $\{0,4,8\}$ & Theorem~\ref{thm_square_digits_and_double_square_digits_everywhere} with $u=2$\\
      $22(0)_n2$ & $n\ge 1$ & $\{0,4,8\}$ & Theorem~\ref{thm_square_digits_and_double_square_digits_everywhere} with $u=2$\\
      $2(0)_n1(0)_n2$ & $n\ge 0$ & $\{0,4,9\}$ & Lemma~\ref{lemma_two_blocks_of_zeros} with $\left(r_0,r_1,r_2;c\right)=(2,1,2;0)$\\
      $(9)_n7$ & $\mathbf{n\ge 0}$ & $\{0,4,9\}$ & Section~\ref{subsec_infinite_non_zero}, Lemma~\ref{lemma_pattern_b_1_s}\\
      $(3)_n8$ & $n\ge 1$ & $\{1,2,4\}$ & Section~\ref{subsec_infinite_non_zero}\\
      $(3)_n5$ & $n\ge 0$ & $\{1,2,5\}$ & Section~\ref{subsec_infinite_non_zero}\\
      $123(3)_n5$ & $n\ge 0$ & $\{1,2,5\}$ & Section~\ref{subsec_infinite_non_zero}\\
      $(3)_n504485$ & $n\ge 4$ & $\{1,2,5\}$ & Section~\ref{subsec_infinite_non_zero}\\
      $(3)_n59$ & $n\ge 1$ & $\{1,2,8\}$ & Section~\ref{subsec_infinite_non_zero}\\
      $(3)_n4$ & $n\ge 0$ & $\{1,5,6\}$ & Section~\ref{subsec_infinite_non_zero}\\
      $(6)_n5$ & $n\ge 0$ & $\{2,4,5\}$ & Section~\ref{subsec_infinite_non_zero}\\
      $(6)_n515$ & $n\ge 1$ & $\{2,4,5\}$ & Section~\ref{subsec_infinite_non_zero}\\
      $2(3)_n5$ & $n\ge 1$ & $\{2,4,5\}$ & Section~\ref{subsec_infinite_non_zero}\\
      $(6)_n8$ & $n\ge 0$ & $\{2,4,6\}$ & Section~\ref{subsec_infinite_non_zero}\\
      $1(6)_n5$ & $n\ge 0$ & $\{2,5,7\}$ & Section~\ref{subsec_infinite_non_zero}\\
      $(6)_n7$ & $n\ge 0$ & $\{4,8,9\}$ & Section~\ref{subsec_infinite_non_zero}\\
      \hline
    \end{tabular}
    \caption{Explanations for the known infinite patterns in base $B=10$.}
    \label{table_explanations_b_10_patterns}  
  \end{center}
\end{table}

There are also some patterns that are quite similar but do not fit directly in the previously described family of patterns.
\begin{proposition}
  Let $B\equiv 2\pmod 4$ with $B\ge 4$ and
  \begin{equation}
    N= \frac{B}{2} + \frac{B}{2}\cdot B^2 + \frac{B}{2}\cdot B^3 + B^4\cdot\left(\frac{B}{2}\cdot B^3 - 1\right) + \frac{B}{2}\cdot B^{10} +
    \sum_{i=0}^l \frac{B}{2} \cdot B^{10+6\cdot 2^i}.
  \end{equation} 
  For each $l\in\mathbb{Z}$ we have $\varphi_B(N^2)=\left\{0,\frac{B}{2},\frac{B-2}{4}\right\}$.
\end{proposition}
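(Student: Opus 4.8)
The plan is to pull the factor $\tfrac B2$ out of $N$ and reduce everything to a clean expansion of one square. Because $B^4=\tfrac B2\cdot 2B^3$, the head contributions $\tfrac B2+\tfrac B2B^2+\tfrac B2B^3+B^4\left(\tfrac B2B^3-1\right)$ collapse to $\tfrac B2\left(1+B^2-B^3+B^7\right)$, so that $N=\tfrac B2\,T$ with
\[
  T=\left(1+B^2-B^3+B^7\right)+\sum_{e\in A}B^{e},
\]
where $A$ is the set of ``tail'' exponents that occur in the sum (together with the exponent $10$); these grow doubly exponentially and the feature actually used below is that $A$ is generated from $10$ by the doubling $a\mapsto 2a-4$, i.e.\ the $i$-th tail exponent is $4+6\cdot 2^i$. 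This is exactly the telescoping that, in the intricate representation of Equation~(\ref{eq_intricate_representation}), turns $N$ into $\tfrac B2\left(B^{a_1}-B^{a_2}-\dots-B^{a_l}-1\right)$. Since $B\equiv 2\pmod 4$ we have $\tfrac{B^2}{4}\in\mathbb N$, and $B\ge 4$ gives $\tfrac{B-2}{4}\ge 1$; the identities that do all the digit bookkeeping are $\tfrac{B^2}{4}=\tfrac{B-2}{4}\cdot B+\tfrac B2$, $\tfrac{B^2}{2}=\tfrac B2\cdot B+0$ and $B^2=1\cdot B^2$. From $N^2=\tfrac{B^2}{4}T^2$ the whole problem is now to control $T^2$.

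Writing $T=Q+R$ with $Q=1+B^2-B^3+B^7$ fixed and $R=\sum_{e\in A}B^e$, one expands $T^2=Q^2+2QR+R^2$ and pins down the coefficient pattern $T^2=\sum_m d_mB^m$: (i) the only coincidence among sums of two exponents is $3+7=0+10$ — one coming from $Q^2$, the other from $Q$ times the smallest tail term — so that $d_{10}=-2+2=0$; (ii) every other $d_m$ lies in $\{-2,0,1,2\}$, with $+1$ at the square exponents $2e$, and $\pm 2$ at the cross exponents $e+f$, the sign dictated by the single minus sign of $Q$; (iii) every index $m$ with $d_m=-2$ is immediately followed by an index with $d_{m+1}=+1$; (iv) no two indices with coefficient $+1$ are adjacent. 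The Sidon-type spread needed for (i) and (ii) follows from uniqueness of binary representations ($e+f$ determines $\{e,f\}$ for the tail, and the tail lies far from the head); (iii) is where the doubling relation enters, since the $-2$'s sit at $\{3,5\}\cup\{t_i+3\}$ (from the products $(-B^3)\cdot 1$, $(-B^3)\cdot B^2$, $(-B^3)\cdot B^{t_i}$) while $t_i+4=2t_{i-1}$ (and $t_0+4=14=2\cdot 7$) is a square exponent, hence carries $+1$; and (iv) is a short check that none of the square exponents $\{0,4,6,14\}\cup\{2t_i\}$ are consecutive.

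Finally one multiplies by $\tfrac{B^2}{4}$ and normalises. By the identities above, $d_m=+2$ puts the single digit $\tfrac B2$ at place $m+1$; $d_m=+1$ puts $\tfrac B2$ at $m$ and $\tfrac{B-2}{4}$ at $m+1$; and a $-2$ at $m$ paired by (iii) with the $+1$ at $m+1$ contributes $-\tfrac B2B^{m+1}+\tfrac B2B^{m+1}+\tfrac{B-2}{4}B^{m+2}=\tfrac{B-2}{4}B^{m+2}$, i.e.\ the two copies of $\tfrac B2$ annihilate and one clean digit survives. After these cancellations only ``$+1$'' and ``$+2$'' contributions remain, and by (iv) no place ever receives two $+1$-type contributions; a single carry-resolution pass then shows the carry stays in $\{0,\tfrac B2,\tfrac{B-2}{4}\}$ and that every digit of $N^2$ equals $0$, $\tfrac{B-2}{4}$ or $\tfrac B2$. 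Since the top square exponent yields both $\tfrac{B-2}{4}$ and $\tfrac B2$ and the representation plainly has gaps, all three digits occur, so $\varphi_B(N^2)=\left\{0,\tfrac B2,\tfrac{B-2}{4}\right\}$. The main obstacle is claims (ii)--(iii): verifying that the only unavoidable coincidence among exponent sums is the one at $10$ and that each $-2$ in $T^2$ is shadowed by a $+1$ — a Sidon-type argument for the doubly-exponential tail together with a finite check on the fixed head $\{0,2,3,7\}$. Once the coefficient pattern of $T^2$ is nailed down, the concluding normalisation is a bounded, routine computation.
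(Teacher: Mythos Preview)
Your write-up changes the statement. You take the tail exponents to be $t_i=4+6\cdot 2^i$ (so $10,16,28,52,\dots$, generated by $a\mapsto 2a-4$), whereas the proposition as printed has the separate $\tfrac B2\,B^{10}$ together with $\sum_{i=0}^{l}\tfrac B2\,B^{10+6\cdot 2^i}$, i.e.\ tail $10,16,22,34,\dots$. These are not the same sequence, and the distinction is fatal: with the printed exponents the claim is already \emph{false} for $l=1$. Indeed, in your own notation one then gets $d_{25}=-2$ (from $-B^3\cdot B^{22}$) but $d_{26}=+2$ (from $B^{10}\cdot B^{16}$), not $+1$, so your cancellation step (iii) fails; after multiplying by $B^2/4$ a digit $\tfrac{B-2}{2}$ appears at place $27$. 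The paper's own proof breaks at exactly the same spot: in its second displayed form the exponents $21+6\cdot 2^0=27$ and $(14+6\cdot 2^1)+1=27$ collide, so the asserted ``pairwise different'' check is wrong. Your exponent sequence $4+6\cdot 2^i$ does match the paper's numerical examples ($50049995505$, $5\cdots 5\,0049995505$ with the next $5$ at position $28$, not $22$), so what you have is a proof of the \emph{intended} proposition; you should say explicitly that you are correcting the exponents.

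For the corrected statement your route is genuinely different from the paper's and cleaner: you factor $N=\tfrac B2\,T$ with $T=1+B^2-B^3+B^7+\sum_iB^{t_i}$ and reduce everything to the $\{-2,0,1,2\}$ coefficient pattern of $T^2$, whereas the paper expands $N^2$ directly and repeatedly uses $2u^2=uB$, $2u=B$. Both arguments pivot on the same identity $t_i+4=2t_{i-1}$ (resp.\ $t_0+4=2\cdot 7$), which in your language pairs each $-2$ with a $+1$ and in the paper's language converts $u^2B^{2t_{i-1}}-uB^{t_i+4}$ into $(u^2-u)B^{2t_{i-1}}$; your presentation isolates this mechanism more transparently.

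One point in your final paragraph is under-argued. After the $(-2,+1)$ cancellations you are left with contributions of type $+2$ (a single digit $\tfrac B2$ at $m+1$), unpaired $+1$ (two digits at $m,m+1$), and paired $(-2,+1)$ (one digit at $m+2$). Ruling out ``two adjacent $+1$'s'' (your (iv)) is not by itself enough to prevent overlaps: one also needs, e.g., that no $+2$ sits at $m$ with an unpaired $+1$ at $m+1$, and that no pair ends one below an unpaired $+1$. These checks are easy (the only unpaired $+1$'s are at $0$ and at $2t_L$, and short parity/congruence arguments modulo $2$ and $3$ exclude the remaining cases), but they should be stated rather than absorbed into ``a single carry-resolution pass''.
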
   
\begin{proof}
  Using the abbreviation $u=\tfrac{B}{2}$ we compute
  \begin{eqnarray*}
    N^2 &=& u^2+u\cdot B^3+\left(u^2-u\right)\cdot B^4 +\left(u^2-u\right)\cdot B^6 +u\cdot B^8+u\cdot B^{10} +u\cdot B^{13}+\left(u^2-u\right)\cdot B^{14}\\ 
        && +u\cdot B^{18}+u^2\cdot B^{20} +\sum_{i=0}^l u^2 B^{20+12\cdot 2^i} +\sum_{0\le i<j\le l} u\cdot B^{21+6\cdot 2^i+6\cdot 2^j} \\ 
        && +\sum_{i=0}^l \left( u\cdot B^{11+6\cdot 2^i} + u\cdot B^{13+6\cdot 2^i} -u\cdot B^{14+6\cdot 2^i} + u\cdot B^{18+6\cdot 2^i}  + u\cdot B^{21+6\cdot 2^i}\right) \\ 
        &=& u^2+u\cdot B^3+\left(u^2-u\right)\cdot B^4 +\left(u^2-u\right)\cdot B^6 +u\cdot B^8+u\cdot B^{10} +u\cdot B^{13}+\left(u^2-u\right)\cdot B^{14}\\ 
        && +u\cdot B^{18}+u^2\cdot B^{20+6\cdot 2^{l+1}} +\sum_{0\le i< j\le l} u\cdot B^{21+6\cdot 2^i+6\cdot 2^j}\\
        && +\sum_{i=0}^l \left( u\cdot B^{11+6\cdot 2^i} + u\cdot B^{13+6\cdot 2^i} +\left(u^2-u\right)\cdot B^{14+6\cdot 2^i} + u\cdot B^{18+6\cdot 2^i}  + u\cdot B^{21+6\cdot 2^i}\right). 
  \end{eqnarray*}
  Noting that $u=\tfrac{B}{2}$, $u^2-u=\tfrac{B-2}{4}\cdot B$, and $u^2=\tfrac{B-2}{4}\cdot B+\tfrac{B}{2}$, it remains to observe that the exponents 
  $0$, $1$, $3$, $4$, $5$, $6$, $7$, $8$, $10$, $13$, $14$, $15$, $18$, $\{20,21\}+6\cdot 2^{l+1}$, $\left\{11,13,14,15,18,21\right\}+6\cdot 2^i$ for $0\le i\le l$, and 
  $21+6\cdot 2^i+6\cdot 2^j$ for $0\le i<j\le l$ of the occurring non-zero digits are pairwise different. 
\end{proof} 
Examples for $B=10$ are given by 
{\footnotesize\begin{eqnarray*}
  50049995505^2 &=& 2505002050050520205025,\\
  50000050049995505^2 &=& 2500005005002055502050050520205025,\text{ and }\\
  50000000000050000050049995505^2 &=& 2500000000005000005005002050505005002055502050050520205025. 
\end{eqnarray*}} 
The following three examples might also be part of an infinite pattern:
{\footnotesize\begin{eqnarray*} 
50000004999999955050005^2 &=& 2500000500000020505000050500052020502050500025,\\
50050000004999999999955050005^2 &=& 2505002500500500000020500505500050500050002020502050500025,\text{ and}\\
50000005004999999999955050005^2 &=& 2500000500500025050020505000050050550050002020502050500025.
\end{eqnarray*}}

\subsubsection{Summary}
\label{subsec_summary_infinite_patterns}
The idea of this subsection is to summarize our known knowledge on the infinite patterns for base $B=10$. In Table~\ref{table_explanations_b_10_patterns} we list 
the known patterns mentioned at \url{www.worldofnumbers.com/threedigits.htm}, excluding those that are discussed in Subsection~\ref{subsec_intricate_patterns}, and 
a few new ones. In all cases we give a reference to a more general result. Theorem~\ref{thm_square_digits_and_double_square_digits_everywhere} with $u\in\{1,2\}$, 
Theorem~\ref{thm_half_of_basis_as_digits}, and Lemma~\ref{lemma_many_ones_1_four} give further patterns not mentioned in the table. Due to the relation between 
examples with decimal digits in $\{0,1,2\}$ and those with decimal digits in $\{0,4,8\}$, see Lemma~\ref{lemma_multiplied_triples}, we have dropped the patterns 
for the latter.    

\subsubsection{Sporadic solutions}
\label{subsec_sporadic_solutions}
Let $D\subseteq\{0,\dots,9\}$ be a set of three decimal digits. All integers $N$ with $D_{10}(N^2)=D$ that are not contained in one of the constructions of the previous subsections 
are called \emph{sporadic solutions}. Of course it can happen that a new general construction is discovered so that some sporadic solutions may lose this label in the future.  
Hisanori Mishima has tabulated all sporadic solutions for the case $0\in D$ for all $N\le 10^{23}$ and for the case $0\notin D$ for all $N\le 10^{25}$, see 
\url{www.asahi-net.or.jp/~KC2H-MSM/mathland/math02/math0210.htm}. We state a few newly discovered sporadic solutions in Table~\ref{table_sporadic_solutions}. 

\begin{table}[htp]
  \begin{center}
    \begin{tabular}{ll}
      \hline
      $N$ & $\{D_{10}(N)$\\
      \hline
      471287714788971663493899       & $\{0,1,2\}$ \\
      10000009999995510010001000001  & $\{0,1,2\}$ \\
      77470059130002034719700749     & $\{0,1,6\}$ \\
      205524700326856587391168       & $\{0,2,4\}$ \\
      447241797269721007814765       & $\{0,2,5\}$ \\
      50050000004999999999955050005  & $\{0,2,5\}$ \\
      50000005004999999999955050005  & $\{0,2,5\}$ \\
      141422082876067219949805050005 & $\{0,2,5\}$ \\
      21214250022106461574572502     & $\{0,4,5\}$ \\
      2108436491907081488939581538   & $\{0,4,5\}$ \\
      942575429577943326987798       & $\{0,4,8\}$ \\
      26395073915340646948470264     & $\{2,6,9\}$ \\ 
      25781108305591628417975738     & $\{4,5,6\}$ \\
       \hline
    \end{tabular}
    \caption{Sporadic solutions $N$ with $\left|D_{10}(N^2)\right|=3$ and many digits}
    \label{table_sporadic_solutions}   
  \end{center}
\end{table}  

\subsection{Feasible and infeasible triples of digits}
\label{sec_triples}
Given a base $B\ge 2$ and a set $D\subseteq \{0,1,\dots,B-1\}$ of cardinality $3$, does there exist a positive integer $N$, not divisible by $B$, such that $D_B(N^2)=D$? For 
the positive answer, where we call $D$ \emph{feasible}, it is sufficient to provide an example. For the negative answer, where we call $D$ \emph{infeasible}, we have the 
following rather easy kind of certificate, which we want to explain in terms of an example. Consider the set
\begin{eqnarray*}
  && \{n^2\operatorname{rem} 100\,:\, 1\le n\le 99, n\not\equiv 0\pmod {10}\}\\ &=&\{1,4,9,16,21,24,25,29,36,41,44,49,56,61,64,69,76,81,84,89,96\}.
\end{eqnarray*}
With this we can easily check that e.g.\ $D=\{0,2,8\}$ is infeasible since none of the nine combinations of two digits in $D$ occurs in the above set. Of course, we can also 
consider the last three or last four digits of $N^2$. For bases $B\in \{9,10\}$ we remark that the last two digits are sufficient, i.e., the infeasible 
patterns are characterized by the non-existence of $2$-suffixes, see Section~\ref{sec_suffices}. 

As a more general statement we observe that each $D$ not containing an even number is infeasible, i.e., not all digits can be odd.
\begin{lemma}
  Let $B\in\mathbb{N}_{\ge 2}$ and $N\in\mathbb{N}$ such that $l_B(N^2)\ge 2$. If $B\equiv 2\pmod 4$, then $D_B(N^2)$ contains an even element, i.e., $N^2$ contains an 
  even digit in base $B$-representation.
\end{lemma}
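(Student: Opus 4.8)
The plan is to analyze the last two digits of $N^2$ in base $B$, exploiting the fact that $B\equiv 2\pmod 4$ forces $B$ to be even while $B/2$ is odd. Write $N=[a_{n-1},\dots,a_0]_B$ with last digit $a_0$. By Lemma~\ref{lemma_mult_last_digits}, the last digit of $N^2$ is $b_0=a_0^2\operatorname{rem}B$ and, if we write $a_0^2=c_0B+b_0$, then $b_1=(c_0+2a_0a_1)\operatorname{rem}B$. I would split into two cases according to the parity of $b_0$: if $b_0$ is even we are done immediately, so assume $b_0$ is odd and aim for a contradiction or rather a different even digit further along.

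First I would observe that if $b_0$ is odd then, since $b_0\equiv a_0^2\pmod B$ and $B$ is even, $b_0$ odd forces $a_0$ odd and moreover $a_0^2$ odd; writing $a_0^2=c_0B+b_0$ with $b_0$ odd and $B$ even shows $c_0$ has the same parity as $a_0^2-b_0$ over... here the key arithmetic point is that modulo $2$ we have $a_0^2\equiv a_0\equiv 1$, and modulo the odd number $B/2$ one can track $b_0$. The cleanest route is probably to work modulo $4$: since $B\equiv 2\pmod 4$, we have $B^2\equiv 0\pmod 4$ but $B\equiv 2\pmod 4$, so $N^2\equiv b_1B+b_0\pmod 4$, i.e. $N^2\equiv 2b_1+b_0\pmod 4$. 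Now $N^2$ is a square, so $N^2\equiv 0$ or $1\pmod 4$. If $b_0$ is odd, then $2b_1+b_0$ is odd, forcing $N^2\equiv 1\pmod 4$, hence $2b_1+b_0\equiv 1\pmod 4$, i.e. $2b_1\equiv 1-b_0\pmod 4$. Since $b_0$ is odd, $1-b_0$ is even, so this is consistent only when $b_1$ has a prescribed parity determined by $b_0\bmod 4$: precisely $b_1$ is even iff $b_0\equiv 1\pmod 4$, and $b_1$ is odd iff $b_0\equiv 3\pmod 4$.

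So the remaining case to kill is $b_0\equiv 3\pmod 4$ together with $b_1$ odd, and more generally the possibility that $N^2$ has no even digit would require every $b_j$ odd. Here I would push the modular argument one digit further, or better, sum it up globally: if all digits $b_j$ of $N^2$ are odd, consider $N^2\bmod (B-1)$ versus $N^2\bmod (B+1)$, or simply note $N^2=\sum b_jB^j$ and reduce modulo a small modulus. Actually the slickest finish: reduce $N^2\pmod{B/2+?}$... The main obstacle, and the step I expect to require the real idea, is ruling out the all-odd-digits configuration when $b_0\equiv 3\pmod 4$; I anticipate the resolution is that $N^2\equiv \sum_j b_j 2^j \pmod{\text{(power of 2)}}$ using $B\equiv 2\pmod 4$ inductively — each successive digit's contribution is controlled modulo higher powers of $2$ only through the low digits — combined with the constraint that a square is $\equiv 0,1\pmod 8$, which already at the level of $b_0,b_1,b_2$ forces one of them even. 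I would carry out the $\bmod 8$ computation $N^2\equiv b_0+2b_1+4b_2\pmod 8$ (valid since $B\equiv 2\pmod 4$ gives $B^3\equiv 0\pmod 8$) and check that $b_0,b_1,b_2$ all odd is incompatible with $N^2\in\{0,1,4\}\pmod 8$, completing the proof.
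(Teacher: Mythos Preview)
Your $\bmod\,4$ reduction is clean and, up to that point, correct: it shows that if $b_0$ is odd then either $b_1$ is even or $b_0\equiv 3\pmod 4$ with $b_1$ odd. This is a genuinely different route from the paper, which instead computes $b_1$ directly from the carryover formula of Lemma~\ref{lemma_mult_last_digits} and argues that the carryover $c_0=(4k(k+1))\operatorname{div}B$ is even.

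The gap is in your final $\bmod\,8$ step. First, a small slip: from $B\equiv 2\pmod 4$ you only get $B\equiv 2$ or $6\pmod 8$, so the correct reduction is $N^2\equiv b_0 + Bb_1 + 4b_2\pmod 8$, not literally $b_0+2b_1+4b_2$. More importantly, the check you propose does \emph{not} go through: with $b_0,b_1,b_2$ all odd one can perfectly well hit $1\pmod 8$. For instance $b_0\equiv 3$, $b_1\equiv 1$, $b_2\equiv 1\pmod 4$ gives $3+2+4=9\equiv 1\pmod 8$ (and the $B\equiv 6$ case behaves the same). So the $\bmod\,8$ argument does not force an even digit among $b_0,b_1,b_2$, and pushing to higher powers of $2$ will not rescue it either.

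In fact the lemma as stated is false, so no argument can close this case. Take $B=6$ and $N=3$: then $N^2=9=[1,3]_6$, which has $l_6(N^2)=2$ and only odd digits. (For $B=14$ there are several such two-digit examples, e.g.\ $5^2=[1,11]_{14}$, $7^2=[3,7]_{14}$.) The paper's own proof has exactly the matching gap: the assertion that $c_0=(4k(k+1))\operatorname{div}B$ is even fails for $B=6$, $a_0=3$ (so $k=1$), where $c_0=\lfloor 8/6\rfloor=1$. Your $\bmod\,4$ analysis in effect pinpoints precisely when the paper's argument breaks, namely when $b_0\equiv 3\pmod 4$.
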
 
\begin{proof}
   Let us write $[a_1,a_0]_B$ for the last two digits of $N$ and $[b1,b_0]_B$ for the last two digits of $N^2$. If $2|N$, then $2|b_0$, i.e., $b_0$ is even. Thus, 
   we assume $a_0=2k+1$ for some integer $0\le k\le B/2-1$. Now we consider
   $$
     b_1\cdot B+b_0\equiv 2a_0a_1\cdot B+a_0^2\equiv 2a_0a_1\cdot B +4k^2+4k+1 \pmod{B^2}, 
   $$  
   which implies 
   $$
     b_1\equiv 2a_0a_1 +\left(4k^2+4k+1\right)\operatorname{div}B\pmod B.
   $$
   Since $B$ is even we have $\left(4k^2+4k+1\right)\operatorname{div}B=\left(4k(k+1)\right)\operatorname{div}B$, so that $2a_0a_1\equiv 0\pmod 2$ and $B\not\equiv 0\pmod 4$ imply 
   $b_1\equiv 0\pmod 2$, i.e., $b_1$ is even. 
\end{proof}
 
Surely, there are ${B\choose 3}$ possible triples for base $B$. The two undecided triples for $B=10$ are given by $\{0,1,3\}$ and $\{6,7,8\}$. We remark that for the 
triple $\{5,6,8\}$ the only known solution $816^2=665856$ is rather small. For base $B=9$ there are $84$ triples, where $20$ triples are infeasible. The remaining 
$64$ triples allow solutions. Moreover, for $8$ triples we have found infinite patterns, see Appendix~\ref{sec_b_9} for the details.

\subsection{Suffixes and prefixes}
\label{sec_suffices}
For a given base $B$ and \emph{$n$-suffix} of an integer $M$ consists of the last $n$ digits of $M$ in base $B$ representation. Given a subset $D\subseteq \{0,1,\dots,B-1\}$ 
of the possible digits, we denote by $\omega_B^D(n)$ the number of positive integers $x$ with at least $n$ digits such that the last $n$ digits of the corresponding square are 
contained in $D$. Let us have an example for $B=10$ and $D=\{7,8,9\}$. Since neither $7$ nor $8$ is a quadratic residue modulo $10$, the last digit of the square has to be a $9$, 
so that the last digit of the root is either a $3$ or a $7$, i.e., $\omega_{10}^{\{7,8,9\}}(1)=2$. Using the notation and conditions from Lemma~\ref{lemma_mult_last_digits} we write
$$
  \big([a_{n-1},\dots,a_0]_B\big)^2=[c_{n-1} | b_{n-1},\dots,b_0]_B
$$  
in order to indicate that the square $M^2$ of the integer $M=[a_{n-1},\dots,a_0]_B$ ends with $[b_{n-1},\dots,b_0]_B$ and has $c_{n-1}$ as its next carryover. E.g.\ we have 
$[3]_{10}^2=[0|9]_{10}$ and $[7]_{10}^2=[4|9]_{10}$. Next we want to extend these $1$-prefixes to $2$-prefixes such that the last two digits of their corresponding squares are 
contained in $D=\{7,8,9\}$, i.e., we want to solve $[x,3]_{10}^2=[\star| y,9]_{10}$ and $[x,7]_{10}^2=[\star |y,9]_{10}$ for suitable $x\in\{0,\dots,9\}$ and $y\in \{7,8,9\}$, where 
$\star$ denotes an arbitrary (non-negative) integer. Using Lemma~\ref{lemma_mult_last_digits} we can rewrite the two equations to
$$
  2\cdot 3\cdot x+0=6x+0\equiv y\in\{7,8,9\} \pmod {10}\quad\text{and}\quad 
  14x+4\equiv y\in\{7,8,9\} \pmod {10}.
$$ 
Multiplying the equations with $7$ and $3$, respectively, and using the $\operatorname{rem}$ operator instead of $\equiv$, we can reformulate to
$$
  2x\operatorname{rem} 10\in\{3,6,9\}\quad\text{and}\quad 
  2x+2\operatorname{rem} 10\in\{1,4,7\}\,\Leftrightarrow\, 2x\operatorname{rem} 10\in\{2,5,9\}. 
$$
Since $2$ divides $10$ we cannot further simplify these equations. Moreover, $x$ is a solution iff $x+10/2=x+5$ is a solution. In the first case only $6$ is divisible by $2$ and in the 
second case only $2$ is divisible by $2$, so that $x\in\{3,8\}$ in the first case and $x\in\{1,6\}$ in the second case. I.e., we have 
$[33]_{10}^2=[1|89]_{10}$, $[83]_{10}^2=[4|89]_{10}$, $[17]_{10}^2=[1|89]_{10}$, $[67]_{10}^2=[8|89]_{10}$, and $\omega_{10}^{\{7,8,9\}}(2)=4$. 
Similar computations for the next digit gives
\begin{itemize}
  \item $[x33]_{10}$: $6x+10\operatorname{rem}10\in\{7,8,9\}\,\Longleftrightarrow\, 2x\operatorname{rem}10\in\{3,6,9\}\,\Rightarrow x\in\{3,8\}$;
  \item $[x83]_{10}$: $6x+68\operatorname{rem}10\in\{7,8,9\}\,\Longleftrightarrow\, 2x\operatorname{rem}10\in\{0,3,7\}\,\Rightarrow x\in\{0,5\}$;
  \item $[x17]_{10}$: $14x+2\operatorname{rem}10\in\{7,8,9\}\,\Longleftrightarrow\, 2x\operatorname{rem}10\in\{1,5,8\}\,\Rightarrow x\in\{4,9\}$;
  \item $[x67]_{10}$: $14x+44\operatorname{rem}10\in\{7,8,9\}\,\Longleftrightarrow\, 2x\operatorname{rem}10\in\{2,5,9\}\,\Rightarrow x\in\{1,6\}$;
\end{itemize}  
and $\omega_{10}^{\{7,8,9\}}(3)=8$. Thus, we obtain $[333]_{10}^2=[2|889]_{10}$, $[833]_{10}^2=[5|889]_{10}$, $[083]_{10}^2=[6|889]_{10}$, $[583]_{10}^2=[9|889]_{10}$, 
$[417]_{10}^2=[5|889]_{10}$, $[917]_{10}^2=[12|889]_{10}$, $[167]_{10}^2=[5|889]_{10}$, and $[667]_{10}^2=[12|889]_{10}$. For general $B$, $D$ and fixed last digit $a_0$ 
of the root the corresponding equations always have the form
\begin{equation}
  \label{eq_compute_next_matching_digit}
  \left(2a_0x + \gamma\right) \operatorname{rem} B\in D
\end{equation}
for some suitable integer $\gamma$ depending on the other data. If $2a_0$ is invertible in $\mathbb{Z}_B:=\mathbb{Z}/B\mathbb{Z}$, then there are exactly $|D|$ solutions for 
$x$. In our situation $a_0$ cannot be invertible in $\mathbb{Z}_{10}$ since $2$ is zero divisor. However both choices $a_0\in\{3,7\}$ are invertible in $\mathbb{Z}_{10}$, so that 
the number of solutions is given by $2\cdot|\{d\in D\,:\, d\equiv \gamma\pmod 2\}|$, i.e., there are two solutions if $\gamma$ is even and four solutions if $\gamma$ is odd. So far 
only the first case occurred and the last digits of the square are uniquely determined. This changes when we extend the next digit and determine 
$\omega_{10}^{\{7,8,9\}}(4)=24$. 
Computing the next values $\omega_{10}^{\{7,8,9\}}(n)$ with a computer program suggests:
\begin{conjecture}
  \label{conjecture_789}
  For all $n\ge 3$ we have $\omega_{10}^{\{7,8,9\}}(n)=8\cdot 3^{n-3}$.
\end{conjecture}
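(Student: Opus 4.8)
The plan is to run the suffix‑extension tree of Section~\ref{sec_suffices} while tracking one extra bit per node. If the last $n$ digits of $M$ form an admissible $n$‑suffix (the last $n$ digits of $M^2$ lie in $D=\{7,8,9\}$), then the last digit $a_0$ is forced to be $3$ or $7$, and by Lemma~\ref{lemma_mult_last_digits} prepending a digit $x$ to $M$ produces the new square digit $(2a_0x+\gamma_M)\operatorname{rem}10$, where $\gamma_M=c_{n-1}+\sum_{i=1}^{n-1}a_ia_{n-i}$ depends only on the $n$‑suffix. As already recorded after Equation~(\ref{eq_compute_next_matching_digit}), because $a_0$ is a unit modulo $10$ while $2a_0$ is a zero divisor, the number of admissible extensions of $M$ equals $2\cdot|\{d\in D: d\equiv\gamma_M\!\!\pmod 2\}|$, i.e.\ $2$ when $\gamma_M$ is even (the new digit must be $8$) and $4$ when $\gamma_M$ is odd (the new digit is $7$ or $9$). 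I therefore split $\omega_{10}^{\{7,8,9\}}(n)=e_n+o_n$, where $e_n$ (resp.\ $o_n$) counts admissible $n$‑suffixes with $\gamma_M$ even (resp.\ odd), and the whole statement will follow from a recursion for $e_n,o_n$.

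The key step is to control how the parity of $\gamma$ is inherited by the children. Fix an admissible $n$‑suffix $a$ with $n\ge 2$ and let $x,x'$ be two admissible extensions yielding the \emph{same} new square digit; then $2a_0x\equiv 2a_0x'\pmod{10}$, so $x'=x\pm 5$. Prepending $x$ gives carryover $c_n=(2a_0x+\gamma_a)\operatorname{div}10$, and replacing $x$ by $x\pm 5$ changes $c_n$ by $\pm a_0$, an odd number, so $c_n$ flips parity inside such a pair. On the other hand the $\gamma$ used to extend the resulting $(n{+}1)$‑suffix equals $c_n+\sum_{i=1}^{n}a'_ia'_{n+1-i}$, and because $n\ge 2$ the new digit $a'_n=x$ occurs in that sum only through the two symmetric terms $a'_1a'_n+a'_na'_1=2a_1x$ — there is no $x^2$ term, which is exactly the point that fails at $n=1$ — so this sum changes by an even amount under $x\mapsto x\pm 5$. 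Hence the two children of a pair have $\gamma$ of opposite parity. An even‑$\gamma$ node has two children forming one such pair, so it has exactly one even‑$\gamma$ and one odd‑$\gamma$ child; an odd‑$\gamma$ node has four children forming two such pairs, hence two of each. This yields $e_{n+1}=o_{n+1}=e_n+2o_n$ for all $n\ge 2$.

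It remains to seed the induction and read off the closed form. From the data in Section~\ref{sec_suffices} the admissible $3$‑suffixes are $333,833,083,583,417,917,167,667$ with carryovers $c_2\in\{2,5,6,9,5,12,5,12\}$; since for a $3$‑suffix $\gamma\equiv c_2\pmod 2$, four of them are even and four odd, so $e_3=o_3=4$ and $\omega_{10}^{\{7,8,9\}}(3)=8$. Feeding this into the recursion gives $e_n=o_n=4\cdot 3^{\,n-3}$ for all $n\ge 3$ by an immediate induction, whence $\omega_{10}^{\{7,8,9\}}(n)=8\cdot 3^{\,n-3}$. The only genuinely delicate point — and the reason the formula starts at $n=3$ rather than lower down — is the linearity of the cross term in the newly prepended digit, which breaks at the root of the tree where that digit enters quadratically; everything else is bookkeeping with Lemma~\ref{lemma_mult_last_digits}.
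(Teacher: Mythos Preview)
Your proof is correct and uses essentially the same mechanism as the paper's own argument (the proposition following Corollary~\ref{cor_omega_tilde}): pair each admissible extension $x$ with $x\pm 5$, observe that the carryover flips parity while the cross terms change by an even amount, and deduce the factor $|D|$ per step once the parity balance is established. The only difference is packaging: the paper tracks the pair $(a_k\bmod 2,\,c_{2k-1}\bmod 2)$ separately and works with $\widetilde{\omega}$ for general $B\equiv 2\pmod 4$ and general $D$ before specializing, whereas you fold both parities into the single invariant $\gamma\bmod 2$ and argue directly for $D=\{7,8,9\}$ --- a slightly cleaner bookkeeping for this particular case, at the cost of the paper's greater generality.
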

In Table~\ref{table_conjectured_formulas_num_suffix} we list a few more such observations. 
Of course we may write for cases like $D=\{1,3,9\}$, where no $2$-suffixes or no $3$-suffixes exist at all, $\omega_{10}^D=0\cdot 3^{n-3}$ for $n\ge 3$. 
However, the existence of an integer $k$ and a constant $c\in\mathbb{R}_{>0}$, depending on $B$ and $D\subseteq \{0,1,\dots,B-1\}$ with $|D|\ge 2$, such that 
$\omega_B^D(n)=c\cdot |D|^{n-k}$ for all $n\ge k$ seems to be wrong in general. For e.g.\ $B=10$ and $D=\{0,1,6\}$ we have 
\begin{eqnarray*}
  \left(\omega_{B}^D \right)_{n=1,\dots,20} &=& 4, 12, 36, 112, 336, 1000, 3000, 8984, 26952, 80984, 243016, 728856, 2186568,\\ 
  &&  6560856, 19682824, 59047960, 177146440, 531451096, 1594341000, 4783037336.          
\end{eqnarray*}

\begin{table}[htp]
  \begin{center}
    \begin{tabular}{ll}
      \hline 
      conjectured pattern & cases $d_1d_2d_3$\\ 
      \hline
      $16\cdot 3^{n-3}$, $n\ge3$ & 019,123,127,189,239,279\\ 
      $24\cdot 3^{n-3}$, $n\ge2$ & 012, 029,128,289\\
      $24\cdot 3^{n-3}$, $n\ge3$ & 013,015,017,039,059,079,138,158,178,389,589,789\\  
      $32\cdot 3^{n-3}$, $n\ge3$ & 018,089,129\\
      $40\cdot 3^{n-3}$, $n\ge3$ & 235,257,258\\
      $48\cdot 3^{n-3}$, $n\ge4$ & 048\\ 
      $56\cdot 3^{n-3}$, $n\ge3$ & 125,259\\
      $80\cdot 3^{n-3}$, $n\ge3$ & 025\\
      none or involved & 014,016,024,034,036,045,046,047,049,056,067,069,124,126,134,136,\\ 
                       & 145,146,147,148,149,156,167,168,169,234,236,245,246,247,248,249,\\ 
                       & 256,267,269,345,346,347,348,349,356,367,368,369,456,457,458,459 \\
                       & 467,468,469,478,479,489,567,569,678,679,689\\ 
      \hline
    \end{tabular}
    \caption{Conjectured formulas for $\omega_{10}^{\{d_1,d_2,d_3\}}(n)$} 
    \label{table_conjectured_formulas_num_suffix} 
  \end{center}
\end{table}
\begin{proposition}
  \label{prop_suffix}
  If $B$ is prime strictly larger than $2$ and $D\subseteq \{0,1,\dots,B-1\}$ with $|D|\ge 2$, then we have $\omega_{B}^{D}(n)=|D|^{n-1}\cdot \omega_{B}^{D}(1)$ for all $n\ge 1$. 
\end{proposition}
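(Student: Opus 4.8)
The plan is to show that when $B$ is an odd prime, extending an $n$-suffix of a root by one more digit always yields exactly $|D|$ admissible extensions, which immediately gives the recursion $\omega_B^D(n+1) = |D|\cdot \omega_B^D(n)$ and hence the closed form by induction. First I would set up the situation using Lemma~\ref{lemma_mult_last_digits}: suppose $[a_{n-1},\dots,a_0]_B$ is a root whose square ends in $n$ digits all lying in $D$, and consider prepending a digit $a_n \in \{0,\dots,B-1\}$. By the formula for $b_n$ in Lemma~\ref{lemma_mult_last_digits}, we have
$$
  b_n = \left(c_{n-1} + \sum_{i=0}^n a_i a_{n-i}\right)\operatorname{rem} B,
$$
and the only term in this sum involving the new digit $a_n$ is $a_0 a_n + a_n a_0 = 2a_0 a_n$ (the cross term appears twice), so $b_n \equiv 2a_0 a_n + \gamma \pmod B$ for a constant $\gamma = c_{n-1} + \sum_{i=1}^{n-1} a_i a_{n-i}$ depending only on the already-fixed digits. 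This is exactly Equation~(\ref{eq_compute_next_matching_digit}).

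Next I would invoke the key arithmetic fact: since $B$ is an odd prime and $a_0 \not\equiv 0 \pmod B$ (which holds because $N$, hence its last digit, is not divisible by $B$ — note that a root ending in $0$ would give a square ending in $0$, and we have restricted to roots not divisible by $B$; more carefully, one argues that if $a_0 = 0$ then $b_0 = 0 \in D$ and the count reduces appropriately, but the clean statement is for $a_0$ invertible), the element $2a_0$ is a unit in $\mathbb{Z}/B\mathbb{Z}$. Therefore the map $a_n \mapsto (2a_0 a_n + \gamma)\operatorname{rem} B$ is a bijection from $\{0,\dots,B-1\}$ onto itself, so exactly $|D|$ values of $a_n$ produce $b_n \in D$. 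Crucially, each such choice of $a_n$ is consistent: since $B$ is prime, every valid $1$-suffix root digit $a_0$ with $a_0^2 \operatorname{rem} B \in D$ is automatically invertible, and the argument applies uniformly at every extension step. This establishes $\omega_B^D(n+1) = |D| \cdot \omega_B^D(n)$ for all $n \ge 1$, and iterating from the base case $n=1$ gives $\omega_B^D(n) = |D|^{n-1}\omega_B^D(1)$.

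One subtlety I would address carefully: the definition of $\omega_B^D(n)$ counts integers $x$ with \emph{at least} $n$ digits, so when extending I must make sure the leading digit $a_n$ is allowed to be $0$ (it is, since $x$ may have more than $n$ digits) — this is why the count is the full $|D|$ rather than something smaller, and it is why the recursion is exactly multiplicative with no boundary correction. I would also note that the carryover $c_{n-1}$ entering $\gamma$ is determined by the lower digits and does not depend on $a_n$, so $\gamma$ is genuinely a constant for each fixed admissible lower suffix; distinct lower suffixes contribute disjoint sets of extended suffixes, so the counts add up correctly.

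The main obstacle is really just the bookkeeping around the invertibility of $a_0$ and the handling of the digit-zero edge cases — whether we require $N$ not divisible by $B$ (in which case $a_0 \ne 0$ and $2a_0$ is a unit since $B$ is an odd prime) or whether we allow all roots. Since the proposition as stated does not impose the non-divisibility restriction, I would either observe that for $B$ prime the quantity $\omega_B^D(1)$ already accounts for all valid last digits and the recursion $b_n \equiv 2a_0 a_n + \gamma$ with $a_0$ possibly zero still behaves correctly (when $a_0 = 0$, every $a_n$ works and $b_n = \gamma\operatorname{rem} B$ is forced, but then $\gamma$ must already be in $D$, consistent with counting), or restrict attention to the generic case and remark that the zero case is subsumed by the $B^m N$ reduction. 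I expect that spelling out this case analysis cleanly — rather than the arithmetic itself, which is a one-line unit argument — is where the proof needs the most care.
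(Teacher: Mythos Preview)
Your proof is correct and follows essentially the same route as the paper: derive Equation~(\ref{eq_compute_next_matching_digit}), then observe that for $B$ an odd prime and $a_0\neq 0$ the coefficient $2a_0$ is a unit in $\mathbb{Z}_B$, so the extension map is a bijection and each $n$-suffix extends in exactly $|D|$ ways. Your extended discussion of the $a_0=0$ edge case is unnecessary here, since the paper's standing convention is that $N$ is not divisible by $B$, hence $a_0\neq 0$ throughout.
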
 
\begin{proof}
  Using Equation~(\ref{eq_compute_next_matching_digit}) it remains to observe that $2a_0$ is invertible in $\mathbb{Z}_B$ since $a_0\neq 0$ ($a_0\not\equiv 0\pmod B$) and 
  $\mathbb{Z}_B$ is a (finite) field.
\end{proof}
So, if $\omega_B^D(1)\neq 0$, which is just a question about the number of quadratic residues in $D$, then $\omega_B^D(n)= c\cdot |D|^n$ for some constant $c\in\mathbb{R}_{>0}$, 
i.e., asymptotically we have exponential behavior.
\begin{conjecture}
  \label{conjecture_suffix_asymptotic}
  If $B\in\mathbb{N}_{\ge 2}$ and $D\subseteq \{0,1,\dots,B-1\}$ with $|D|\ge 2$, then there exists a constant $c\in\mathbb{R}_{>0}$ such that
  $\lim\limits_{n\to\infty} \omega_B^D(n) /|D|^{n}=c$.  
\end{conjecture}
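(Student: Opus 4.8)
The plan is to prove Conjecture~\ref{conjecture_suffix_asymptotic} by tracking the structure of the tree of admissible suffixes and analysing a suitable transfer matrix. First I would set up the combinatorial object precisely: let $T_n$ be the set of $n$-suffixes $[a_{n-1},\dots,a_0]_B$ (with $a_0\not\equiv 0\pmod B$) whose squares end in digits belonging to $D$, so that $\omega_B^D(n)=|T_n|$. Each element of $T_{n+1}$ restricts to an element of $T_n$, and by Equation~(\ref{eq_compute_next_matching_digit}) the number of children of a given suffix $[a_{n-1},\dots,a_0]_B$ is exactly the number of $x\in\{0,\dots,B-1\}$ with $(2a_0x+\gamma)\operatorname{rem}B\in D$, where $\gamma$ depends on the already-fixed digits and the running carryover. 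The key observation is that this count depends only on $a_0$ (the unit $2a_0$ in $\mathbb{Z}_B$) and on the residue class of $\gamma$ modulo $g:=\gcd(2a_0,B)$; writing $D_r:=\{d\in D:d\equiv r\pmod g\}$, the number of children is $\frac{B}{g}\cdot|D_r|$ where $r\equiv\gamma\pmod g$. So the relevant state of a suffix, for the purpose of counting descendants, is the pair $(a_0,\gamma\bmod g_{a_0})$ — but $\gamma$ itself evolves, so one must see how $\gamma\bmod g$ for the extended suffix depends on the previous state.

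Next I would make the state space finite. When we append a digit $a_j$, the new value of $\gamma$ at the next level is governed by the recursion of Lemma~\ref{lemma_mult_last_digits}: $\gamma_{j+1}$ is built from the carryover $c_j$ and a convolution term, and crucially we only ever need $\gamma_{j+1}\bmod g_{a_0}$ (note $a_0$ is fixed throughout a branch), and $g_{a_0}\mid B$. Reducing everything modulo $g_{a_0}$ kills all digits $a_i$ with $i\geq 1$ that are multiplied by $B\equiv 0$, so in fact the residue $\gamma_{j}\bmod g_{a_0}$ cycles through a bounded, explicitly describable set — it is ultimately periodic in $j$ with period dividing some divisor of $B$. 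Thus for each fixed $a_0$ the branching factor at depth $j$ is an eventually periodic function $\phi_{a_0}(j)$ of $j$ with values in $\{\frac{B}{g}|D_r|:r\}$. Consequently $\omega_B^D(n)=\sum_{a_0}\prod_{j=1}^{n-1}\phi_{a_0}(j)$ up to boundary corrections, and each product, being a product of an eventually periodic sequence of nonnegative integers, is either eventually zero or grows like $\lambda^n$ times a periodic factor, where $\lambda$ is the geometric mean over one period. I would then need the two-part claim: (i) for at least one $a_0$ with $\omega_B^D(1)\neq 0$ the product is non-vanishing for all $n$, giving $c>0$; (ii) the dominant growth rate is exactly $|D|$, i.e. the per-step average branching is $|D|$, so that $\omega_B^D(n)/|D|^n$ converges rather than blows up or decays. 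Part (ii) should follow from the identity $\sum_{r\bmod g}\frac{B}{g}|D_r|=\frac{B}{g}\cdot|D|$ against the fact that as $j$ ranges over a period the residue $r$ we hit equidistributes over the $g$ classes... this is exactly where care is needed.

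The honest version of the argument replaces the loose "periodicity" talk with a genuine finite-state automaton / transfer-matrix formulation: the state is $(a_0,\,c\bmod B,\,\text{depth-phase})$ where the depth-phase lives in a finite set because the carryover recursion modulo $B$ together with the bounded information we need from lower digits has finitely many configurations; the transitions are those of Lemma~\ref{lemma_mult_last_digits} restricted to admissible digit choices, and $\omega_B^D(n)$ is the number of length-$n$ paths from a designated start set. Then $\omega_B^D(n)=\mathbf{u}^\top M^{n}\mathbf{v}$ for a nonnegative integer matrix $M$, and by Perron--Frobenius theory $\omega_B^D(n)/\rho(M)^n$ converges to a positive constant provided the relevant strongly connected component reached from the start states is primitive (aperiodic). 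The remaining work is twofold and constitutes the main obstacle: proving $\rho(M)=|D|$ — i.e. that the largest eigenvalue is exactly the number of digits, which is a statement that the "average" number of ways to extend a suffix is $|D|$ — and proving aperiodicity of the dominant component, without which one only gets Cesàro convergence or convergence along residue classes of $n$. The spectral-radius claim is plausible because $\frac{1}{B}\sum_{x=0}^{B-1}\mathbf{1}[(2a_0x+\gamma)\bmod B\in D]$ averages to $\frac{|D|}{B}\cdot B=|D|$ over a full period of $\gamma$, but turning this averaging heuristic into an eigenvalue identity (and ruling out that the start states only ever reach a sub-component with smaller spectral radius, or a non-primitive one) is delicate and, I suspect, is exactly why the authors left this as a conjecture rather than a theorem. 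A safe intermediate result one can actually prove is Proposition~\ref{prop_suffix} together with: $\omega_B^D(n)\leq |D|^{n}$ always (each digit has at most... no, this already fails, so even the clean upper bound is not obvious) — which is itself a reason the full statement is hard. So my proposed plan reduces the conjecture to the automaton's dominant eigenvalue equalling $|D|$ and being simple and primitive, and I would flag that last reduction as the crux, likely requiring a number-theoretic input about equidistribution of the carryover residues that is not transparent from Lemma~\ref{lemma_mult_last_digits} alone.
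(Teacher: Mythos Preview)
The statement you are addressing is labelled a \emph{Conjecture} in the paper; the authors do not prove it. They establish only special cases: Proposition~\ref{prop_suffix} (odd prime $B$) and Corollary~\ref{cor_omega_tilde} (the $\widetilde{\omega}$ variant for $B\equiv 2\pmod 4$), and they present numerical evidence such as the $\{0,1,6\}$ sequence to show that the general behaviour is genuinely irregular. So there is no ``paper's proof'' to compare against; the relevant question is whether your sketch actually closes the gap.

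It does not, and the failure occurs earlier than where you flag it. Your finite-state reduction hinges on the claim that, modulo $g_{a_0}$, the quantity $\gamma$ at level $j$ depends only on a bounded amount of information. But from Lemma~\ref{lemma_mult_last_digits} one has, when appending the digit $a_n$,
\[
\gamma \;=\; c_{n-1} \;+\; \sum_{i=1}^{n-1} a_i\,a_{n-i},
\]
and the convolution sum involves \emph{every} previously chosen digit $a_1,\dots,a_{n-1}$, none of them multiplied by $B$ or by $2a_0$. Your sentence ``Reducing everything modulo $g_{a_0}$ kills all digits $a_i$ with $i\geq1$ that are multiplied by $B\equiv 0$'' is simply false for this sum: nothing is multiplied by $B$ there. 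Consequently $\gamma\bmod g_{a_0}$ is not a periodic function of the depth alone, the branching factor at depth $j$ is not determined by $(a_0,\text{finite phase})$, and the automaton you describe --- with state $(a_0,\,c\bmod B,\,\text{depth-phase})$ --- does not exist. The dependence on the full history is exactly what makes the non-prime, non-invertible-$a_0$ case hard, and it is visible in the paper's $\{0,1,6\}$ data, where the ratios $\omega(n+1)/\omega(n)$ wander erratically around $3$.

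Your later caveats about the spectral radius and primitivity are reasonable worries, but they are downstream of a transfer matrix that you have not constructed. To salvage the approach one would need a much larger (still finite?) state encoding enough of the suffix to determine the convolution residue at every future level --- it is not clear such a finite encoding exists, and that is the real obstruction the paper leaves open.
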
 
In other words, for subsets $D$ of cardinality $3$ we conjecture that for each $B$ and each $D$ there exists a constant $c_{B,D}\in\mathbb{R}_{\ge 0}$ such that 
$\omega_B^D(n)\simeq c\cdot 3^n$. In Table~\ref{table_conjectured_formulas_num_suffix} we can see that the value for $c$ can differ significantly for different sets $D$, $D'$ of 
allowed digits even if $\#D=\# D'$ and both sets allow $n$-suffixes for sufficiently large values of $n$.

Proposition~\ref{prop_suffix} can be partially generalized. If $x$ denotes the new digit, i.e., $a_{2k}$ or $a_{2k+1}$ in the following, then we have
\begin{eqnarray}
  B\cdot c_{2k}+b_{2k} &=& 2a_0x +2\sum_{i=1}^{k-1}a_i a_{2k-i}\,+\,a_k^2+c_{2k-1}\\
  B\cdot c_{2k+1}+b_{2k+1} &=& 2a_0x +2\sum_{i=1}^{k}a_i a_{2k+1-i}\,+\,c_{2k} 
\end{eqnarray}
for $k\ge 1$. By $\widetilde{\omega}_B^D$ we denote the number of solutions counted by $\omega_B^D$ satisfying the additional condition that $a_0$ is invertible in 
$\mathbb{Z}/B\mathbb{Z}$. 
 
\begin{lemma}Let $B\equiv 2\pmod 4$, $k\ge 1$, and $D\subseteq\{0,1,\dots,B-1\}$.
  \begin{itemize}
    \item[(i)]    The number of solutions with $x\equiv 0\pmod 2$ and $a_0$ odd equals the number of solutions with $x\equiv 1\pmod 2$ and $a_0$ odd.
    \item[(ii)]   The number of solutions with $x\equiv 0\pmod 2$ and $a_0$ invertible in $\mathbb{Z}/B\mathbb{Z}$ equals the number of solutions with $x\equiv 1\pmod 2$ 
                  and $a_0$ invertible in $\mathbb{Z}/B\mathbb{Z}$.
    \item[(iii)]  The number of solutions with $c_n\equiv 0\pmod 2$ and $a_0$ odd equals the number of solutions with $c_n\equiv 1\pmod 2$ and $a_0$ odd, where $n\in\{2k,2k+1\}\backslash\{0\}$.
    \item[(iv)]   The number of solutions with $c_n\equiv 0\pmod 2$ and $a_0$ invertible in $\mathbb{Z}/B\mathbb{Z}$ equals the number of solutions with $c_n\equiv 1\pmod 2$ and 
                  $\mathbb{Z}/B\mathbb{Z}$, where $n\in\{2k,2k+1\}\backslash\{0\}$.
    \item[(v)]    $\widetilde{\omega}_{B}^{D}(2k+2)=|D|\cdot \widetilde{\omega}_{B}^{D}(2k+1)$ for all $k\ge 1$.
    \item[(vi)]   The number $\widetilde{\omega}_{B}^{D}(2k)$ splits equally into the four cases $\left(a_k\!\!\mod 2,c_{2k-1}\!\!\mod 2\right)\in \{0,1\}^2$ for all $k\ge 2$.
    \item[(vii)]  $\widetilde{\omega}_{B}^{D}(2k+1)=|D|\cdot \widetilde{\omega}_{B}^{D}(2k)$ for all $k\ge 2$. 
    
    If $\widetilde{\omega}_{B}^{D}(2k)$ splits evenly into $a_k+c_{2k-1}\equiv 0\pmod 2$ and $a_k+c_{2k-1}\equiv 1\pmod 2$, 
                then $\widetilde{\omega}_{B}^{D}(2k+1)=|D|\cdot \widetilde{\omega}_{B}^{D}(2k)$.                
  \end{itemize}
\end{lemma}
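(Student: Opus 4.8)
Since $B\equiv 2\pmod 4$, write $B=2m$ with $m$ odd. The whole statement should follow from one observation about the step in which a valid $(n-1)$-suffix $(a_{n-2},\dots,a_0)$ is extended by a new top digit $x=a_{n-1}$. By Lemma~\ref{lemma_mult_last_digits} the constraint $b_{n-1}\in D$ reads $(2a_0x+\gamma)\operatorname{rem}B\in D$, where $\gamma$ depends only on $a_0,\dots,a_{n-2}$ and $c_{n-2}$, and $c_{n-1}=(2a_0x+\gamma)\operatorname{div}B$. Because $2a_0m=a_0B$, the map $\iota$ sending $x$ to $x+m$ if $x<m$ and to $x-m$ if $x\ge m$ is a fixed-point-free involution on the admissible values of $x$: it leaves $b_{n-1}$ unchanged, it flips the parity of $x$ (shift by the odd number $m$), and, whenever $a_0$ is odd, it flips the parity of $c_{n-1}$ (shift by the odd number $\pm a_0$). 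Parts (i)--(iv) are then immediate: $\iota$ pairs admissible digits so that inside each pair both parities flip, which already yields (i) and (iii) for $a_0$ odd; for (ii) and (iv) one additionally uses that when $a_0$ is invertible modulo $B$ each value of $D$ of the correct parity has exactly two preimages $x$ modulo $B$, differing by $m$, hence one of each parity.

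\textbf{The recursions (v) and (vii).} I would unwind Lemma~\ref{lemma_mult_last_digits} to see that, when $a_0$ is invertible, the number of admissible $x$ extending a fixed valid suffix $w$ equals $2\,|D_{\pi(w)}|$, where $D_0,D_1$ denote the even resp.\ odd digits of $D$ and $\pi(w)\in\{0,1\}$ is the parity of $\gamma$; explicitly $\pi(w)\equiv c_{2k}(w)\pmod 2$ when $x=a_{2k+1}$ and $\pi(w)\equiv c_{2k-1}(w)+a_k(w)\pmod 2$ when $x=a_{2k}$ (using $a_k^2\equiv a_k$). Summing over all suffixes $w$ of the relevant length with $a_0$ invertible gives
$$
  \widetilde\omega_B^D(2k+2)=\sum_{w}2\,\big|D_{c_{2k}(w)\bmod 2}\big|
  \qquad\text{and}\qquad
  \widetilde\omega_B^D(2k+1)=\sum_{w}2\,\big|D_{(c_{2k-1}(w)+a_k(w))\bmod 2}\big|.
$$
In the first identity, part (iv) with $n=2k$ says $c_{2k}(w)$ is even for exactly half the $w$, so the sum collapses to $\widetilde\omega_B^D(2k+1)\cdot(|D_0|+|D_1|)=|D|\cdot\widetilde\omega_B^D(2k+1)$, which is (v). Part (vii) is the same manipulation for the second identity: if $\widetilde\omega_B^D(2k)$ splits evenly according to the parity of $a_k+c_{2k-1}$, the sum collapses to $|D|\cdot\widetilde\omega_B^D(2k)$, and for $k\ge 2$ such an even split is provided by (vi).

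\textbf{The four-way split (vi), and the main obstacle.} It remains to prove (vi), and this is the step I expect to cost the real work. The point of the hypothesis $k\ge 2$ is that then $a_k$ already occurs in every valid $(2k-1)$-suffix, so it is untouched when one extends such a suffix to a $(2k)$-suffix by choosing the top digit $a_{2k-1}$; applying $\iota$ to that top digit is an involution on the valid $(2k)$-suffixes with $a_0$ invertible that preserves the parity of $a_k$ and flips the parity of $c_{2k-1}$, which already gives one half of (vi): for each fixed parity $\alpha$ of $a_k$, the suffixes with $c_{2k-1}$ even match those with $c_{2k-1}$ odd. For the other half — that the two parities of $a_k$ occur equally often — the plan is induction on $k$. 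Using the extension count $2\,|D_{c_{2k-2}(w)\bmod 2}|$ from a $(2k-1)$-suffix $w$ to a $(2k)$-suffix, one reduces (vi) at level $2k$ to the equidistribution of the valid $(2k-1)$-suffixes over the joint parities of $a_k$ and $c_{2k-2}$, and this companion statement at the odd level should in turn reduce by the same counting to an analogous statement one level down, with the small base cases at $k=2$ checked by direct computation. The hard part will be to pin down exactly which tuple of parity bits to carry in the induction hypothesis — the digit(s) near position $k$ together with the running carryover — so that the extension count depends on only one of them while the others propagate cleanly under $\iota$ and the induction closes simultaneously at even and odd indices; everything else is the mechanical bookkeeping of Lemma~\ref{lemma_mult_last_digits} together with the single involution $\iota$.
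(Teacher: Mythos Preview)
Your approach is essentially the same as the paper's. Both arguments rest on the single involution $x\mapsto x+B/2$ on the newly added top digit: since $B/2$ is odd and $2a_0\cdot(B/2)\equiv B\pmod{2B}$ when $a_0$ is odd, this flips the parity of $x$ and of $c_{n-1}$ while leaving $b_{n-1}$ and all lower data fixed. The paper derives (i)--(iv) from this in one sentence, exactly as you do; your extra remark for (ii)/(iv) about two preimages is not needed since ``$a_0$ invertible'' already forces ``$a_0$ odd'' when $B$ is even. For (v) and (vii) the paper also reduces to the even split of the carryover parity (resp.\ of $a_k+c_{2k-1}\bmod 2$), which is your $\pi(w)$-bookkeeping written more tersely.

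For (vi) the paper does precisely what you outline, only more briefly: it notes that at stage $k{+}1$ the parity of $a_k$ is equidistributed by (ii), that $a_k$ is thereafter frozen, and that ``the parities of the subsequent carryovers split half-half due to (iv), applied iteratively''. In other words, the paper does not spell out the stronger inductive invariant either; it simply asserts the iteration closes. Your identification of ``the hard part'' --- tracking jointly the parity of $a_k$ together with the carryover so that the extension count averages to $|D|$ within each $a_k$-class --- is exactly the point the paper's one-line ``applied iteratively'' is glossing over. So your proposal is not missing anything relative to the paper; if anything, you are being more explicit about where the work in (vi) lies. The one simplification you can take from the paper is to argue forward from stage $k{+}1$ (where the $a_k$-split is free) rather than to set up a separate downward induction: at each later stage the involution on the current top digit preserves $a_k$ and flips $c$, which is enough to push the equidistribution through.
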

\begin{proof}
  If $a_0$ is invertible in $\mathbb{Z}/B\mathbb{Z}$, then $B\equiv 0\pmod 2$ implies that $a_0$ is odd. So, if $x$ is a solution, so is $x+B/2$. This already gives (i), (ii), (iii), 
  and (iv) since $B/2\equiv 1\pmod 2$ and $2a_0\cdot B/2\equiv B\pmod{2B}$ using that $a_0$ is odd. For (v) note  that $\omega_{B}^{D}(2k+1)$ splits evenly into 
  $c_{2k}\equiv 0\pmod 2$ and $c_{2k}\equiv 1\pmod 2$ if $k>0$. Since $a_0$ is invertible in $\mathbb{Z}/B\mathbb{Z}$, for each $b_{2k+1}\in D$ we have 
  $2\cdot \widetilde{\omega}_{B}^{D}(2k+1)/2$ solutions. For (vi) we consider the situation when the digit $a_k$ is determined by a solution $x$. From (ii) we conclude that 
  those cases split equally onto $a_k\!\!\mod 2\in\{0,1\}$. Now we further have to extend those $k$-digit numbers to $2k$-digit numbers. During this process the digit $a_k$ does 
  not change any more, while the parities of the subsequent carryovers split half-half due to (iv). Applied iteratively, we end up with statement (vi). For 
  (vii) we observe that (vi) implies that $\widetilde{\omega}_{B}^{D}(2k)$ splits evenly into $a_k+c_{2k-1}\equiv 0\pmod 2$ and $a_k+c_{2k-1}\equiv 1\pmod 2$, so that we can proceed 
  as for (v).   
\end{proof}
Observe that $[3]_{10}^2=[0|9]_{10}$ and $[7]^2_{10}=[4|9]_10$, i.e., for $c_1\equiv 0\pmod 2$ there are two and for $c_1\equiv 1\pmod 2$ there is none possibility. Additionally 
$a_1$ is odd in both cases and $\omega_{10}^{\{7,8,9\}}(3)=2\cdot \omega_{10}^{\{7,8,9\}}(2)$. So, statements (i), (ii), (iii), and (iv) are  valid for $n\ge 1$ only. Also the 
condition that $a_0$ is odd is crucial. For $D=\{0,4,8\}$ consider e.g.\ the {\lq\lq}pair{\rq\rq} of solutions $002$ and $502$ with $3$ digits. We have $[002]_{10}^2=[0|004]_{10}$ 
and $[502]_{10}^2=[2|004]_{10}$, i.e., both carryovers are even. And indeed, those two solutions with $3$ digits generate $12$ solutions with $4$ digits and not $2\cdot 3=6$ as 
the argument in the proof of (v) would suggest. Indeed $\omega_{10}^{\{0,4,8\}}(4)=144 \neq 3\cdot 36=3\cdot \omega_{10}^{\{0,4,8\}}(3)$.       

\begin{corollary}
  \label{cor_omega_tilde}
  Let $B\equiv 2\pmod 4$, $k\ge 1$, and $D\subseteq\{0,1,\dots,B-1\}$. Then, for each $n\ge 3$ we have 
  $\widetilde{\omega}_{B}^{D}(n)=|D|^{n-3}\cdot \widetilde{\omega}_{B}^{D}(3)$. 
\end{corollary}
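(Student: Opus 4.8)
The real content is already contained in parts (v) and (vii) of the preceding lemma; the corollary is just the observation that these two statements, taken together, produce a single clean one-step recursion valid for all $n\ge 3$, after which a one-line induction finishes the job. So the plan is: (1) extract the uniform recursion, (2) induct.

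First I would note that for \emph{every} integer $m\ge 3$ we have
$$
  \widetilde{\omega}_{B}^{D}(m+1)=|D|\cdot\widetilde{\omega}_{B}^{D}(m).
$$
Indeed, if $m$ is odd, write $m=2k+1$ with $k\ge 1$; then part (v) applied with this $k$ gives $\widetilde{\omega}_{B}^{D}(2k+2)=|D|\cdot\widetilde{\omega}_{B}^{D}(2k+1)$, i.e. exactly the displayed identity. If instead $m$ is even, then $m\ge 4$, so we may write $m=2k$ with $k\ge 2$; then part (vii) applied with this $k$ gives $\widetilde{\omega}_{B}^{D}(2k+1)=|D|\cdot\widetilde{\omega}_{B}^{D}(2k)$, again the displayed identity. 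Since every integer $m\ge 3$ is either odd (hence $\ge 3$) or even (hence $\ge 4$), the recursion holds in all cases.

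Now I would finish by induction on $n\ge 3$. For $n=3$ the claim $\widetilde{\omega}_{B}^{D}(3)=|D|^{0}\cdot\widetilde{\omega}_{B}^{D}(3)$ is trivial. Assuming $\widetilde{\omega}_{B}^{D}(n)=|D|^{n-3}\cdot\widetilde{\omega}_{B}^{D}(3)$ for some $n\ge 3$, the recursion above yields
$$
  \widetilde{\omega}_{B}^{D}(n+1)=|D|\cdot\widetilde{\omega}_{B}^{D}(n)=|D|\cdot|D|^{n-3}\cdot\widetilde{\omega}_{B}^{D}(3)=|D|^{(n+1)-3}\cdot\widetilde{\omega}_{B}^{D}(3),
$$
completing the induction.

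I do not expect any genuine obstacle here: the only subtlety is purely bookkeeping, namely that part (v) starts at $k\ge 1$ while part (vii) only starts at $k\ge 2$, so one must check that the two ranges overlap correctly to cover all $n\ge 3$ with no gap — which they do, precisely because the "missing" case $2k=2$ (i.e. $n=2$) is excluded anyway by the hypothesis $n\ge 3$. (One could also remark that $\widetilde{\omega}_{B}^{D}(3)$ itself is finite and explicitly bounded by the number of admissible $3$-suffixes, so the formula is not vacuous, but this is not needed for the statement as phrased.)
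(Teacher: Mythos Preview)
Your proposal is correct and is precisely the intended argument: the paper states the corollary without proof because it follows immediately from parts (v) and (vii) of the preceding lemma, exactly by the parity split and induction you carry out. Your bookkeeping remark about why the ranges $k\ge 1$ in (v) and $k\ge 2$ in (vii) dovetail to cover all $n\ge 3$ is the only thing one has to check, and you have checked it.
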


Let us denote the number of solutions $x\in\{1,\dots,B-1\}$ with $x^2\equiv a\pmod B$ by $s_B(a)$.\footnote{Using the Chinese remainder theorem we can reduce the problem to prime powers 
and then to primes $B$. For odd primes the Legendre-symbol and the quadratic reciprocity law comes into play.} Clearly we have
\begin{equation} 
  \omega_B^D(1)=\sum_{d\in D} S_B(d),
\end{equation}
where $s_B(0)=0$ for all $B\ge 2$. Note that we have $\widetilde{\omega}_B^D(n)=\omega_B^D(n)$ for all $n\ge 1$ iff $\widetilde{\omega}_B^D(1)=\omega_B^D(1)$. Let us have a closer look 
at the special case $B=10$. Here the non-zero quadratic residues are given by $\{1, 4, 5, 6, 9\}$. So, if e.g.\ $D\cap \{1,4,5,6,9\}=\emptyset$, then $\omega_{10}^D(n)=0$ for all 
$n\ge 1$. Corollary~\ref{cor_omega_tilde} implies:
\begin{proposition}  
  If $B=10$ and $D\subseteq\{0,1,\dots,B-1\}$ with $D\cap\{4,5,6\}=\emptyset$, then we have $\widetilde{\omega}_B^D(n)=\omega_B^D(n)$ for all $n\ge 1$ and 
  $\omega_{B}^{D}(n)=|D|^{n-3}\cdot \omega_{B}^{D}(3)$ for all $n\ge 3$. 
\end{proposition}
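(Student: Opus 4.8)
The plan is to reduce the whole statement to the single case $n=1$. Two facts already in the text do the rest: first, the remark that $\widetilde{\omega}_B^D(n)=\omega_B^D(n)$ holds for all $n\ge 1$ as soon as it holds for $n=1$; and second, Corollary~\ref{cor_omega_tilde}, which applies here because $B=10\equiv 2\pmod 4$ and yields $\widetilde{\omega}_B^D(n)=|D|^{n-3}\widetilde{\omega}_B^D(3)$ for all $n\ge 3$. Once $\widetilde{\omega}_B^D=\omega_B^D$ is established on all of $\mathbb{N}_{\ge 1}$, substituting into Corollary~\ref{cor_omega_tilde} immediately gives $\omega_B^D(n)=|D|^{n-3}\omega_B^D(3)$ for $n\ge 3$, which is the second assertion; so only $\widetilde{\omega}_{10}^D(1)=\omega_{10}^D(1)$ remains to be proved.

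Here I would unwind the two quantities. The number $\omega_{10}^D(1)$ counts the digits $a_0\in\{1,\dots,9\}$ (roots are taken not divisible by $B$, which is why $s_B(0)=0$) with $a_0^2\operatorname{rem}10\in D$, while $\widetilde{\omega}_{10}^D(1)$ counts only those such $a_0$ that are in addition invertible in $\mathbb{Z}/10\mathbb{Z}$, i.e.\ that lie in $\{1,3,7,9\}$. Hence $\widetilde{\omega}_{10}^D(1)=\omega_{10}^D(1)$ is equivalent to the claim that no non-invertible digit $a_0\in\{2,4,5,6,8\}$ satisfies $a_0^2\operatorname{rem}10\in D$.

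This last claim is a finite computation: modulo $10$ one has $2^2\equiv 8^2\equiv 4$, $4^2\equiv 6^2\equiv 6$ and $5^2\equiv 5$, so the square of every non-invertible digit in $\{2,4,5,6,8\}$ ends in one of the digits $4,5,6$. The hypothesis $D\cap\{4,5,6\}=\emptyset$ therefore rules all of them out, so every digit contributing to $\omega_{10}^D(1)$ is invertible and $\widetilde{\omega}_{10}^D(1)=\omega_{10}^D(1)$; the two cited facts then deliver the proposition.

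There is no serious obstacle in this argument\strich it is a specialization of the machinery already built for $B\equiv 2\pmod 4$. The only point that needs a moment's care is the role of the digit $0$: it is non-invertible mod $10$ and $0^2$ ends in $0$, but it does not threaten the conclusion because $\omega_B^D$ counts only roots that are not divisible by $B$, which is precisely why the forbidden set in the hypothesis is $\{4,5,6\}$ and not $\{0,4,5,6\}$. If one wished to bypass the unnumbered remark, one could instead argue directly that any $n$-suffix counted by $\omega_{10}^D(n)$ has its last digit among those counted by $\omega_{10}^D(1)$, hence invertible, so it is also counted by $\widetilde{\omega}_{10}^D(n)$, while the reverse inequality $\widetilde{\omega}_{10}^D(n)\le\omega_{10}^D(n)$ is immediate.
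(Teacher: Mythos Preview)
Your proposal is correct and follows essentially the same route as the paper: reduce to $n=1$ via the remark that $\widetilde{\omega}_B^D(n)=\omega_B^D(n)$ for all $n\ge 1$ iff equality holds at $n=1$, verify the case $n=1$ by checking that the non-invertible digits $2,4,5,6,8$ have squares ending in $4,5,6$, and then invoke Corollary~\ref{cor_omega_tilde}. The paper's justification is terser\strich it simply records the quadratic residues modulo $10$ and declares the proposition a consequence of Corollary~\ref{cor_omega_tilde}\strich but the underlying argument is the same, and your explicit computation and your closing remark on the role of the digit $0$ make the logic clearer than the paper does.
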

So, especially Conjecture~\ref{conjecture_789} and several other cases of Table~\ref{table_conjectured_formulas_num_suffix} are true. Lemma~\ref{lemma_multiplied_triples} 
indicates that $\omega_{10}^{\{0,1,2\}}(n)$ is closely related with $\omega_{10}^{\{0,4,8\}}(n)$ while the number of digits can change when going from $N$ to $2N$.  

\medskip

Our next aim is to define a counting function $\alpha_B^D(n)$ for $n$-prefixes similar to $\omega_B^D$ for $n$-suffixes. Before we give a definition of an $n$-prefix, we study 
a few examples. Suppose we want the $2$-prefix 
to be $41$. If only $4$-digit squares are allowed, then 
$$
  \sqrt{4100}=64.03124237\quad\text{and}\quad\sqrt{4199}=64.79969136
$$  
shows that there is no such integer. However, is we allow $5$-digit squares, then
$$
  \sqrt{041000}=202.4845673\quad\text{and}\quad \sqrt{041999}=204.9365756
$$ 
shows that we have the following two solutions
$$
  203^2=41209\quad\text{and}\quad 204^2=41616
$$
both having $[4,1]$ as their leading two digits.

Lets us have another example and search for a number whose square starts with $569569$. From 
$$
  \sqrt{569569000000}=754697.9528
  \quad\text{and}\quad
  \sqrt{569569999999}=754698.6153  
$$
we conclude that $754698$ is a solution, i.e., $754698^2=569569071204$ starts with $569569$. 
Similarly, 
$$
  \sqrt{05695690000000}=2386564.476
  \quad\text{and}\quad
  \sqrt{05695699999999}=2386566.571  
$$
gives us the solutions
$$
  2386565^2=5695692499225\quad\text{and}\quad
  2386566^2=5695697272356.
$$
In order to try to explain a few things we observe that 
\begin{equation}
  \label{eq_root_diff}
  \sqrt{x\cdot C+(C-1)}-\sqrt{x\cdot C}
\end{equation}
is monotonically decreasing in $x$ for each $C>0$. We will choose $C=B^k$, where $B$ is the basis, and will assume suitable bounds for $x$. 
Next we observe
\begin{equation}
  \sqrt{s+t}-\sqrt{s}=\frac{t}{\sqrt{s+t}+\sqrt{t}}
\end{equation}
for all $s,t>0$. Not that the denominator of the right hand side is bounded between $\max\{2\sqrt{t},\sqrt{s}\}$ and $2\sqrt{s+t}$.

For our first cases, i.e.\ squares without a {\lq\lq}leading zero{\rq\rq} we have to choose $C=B^k$ for some positive integer $k$ and 
$B^{k-1}\le x\le B^k-1$. With this we obtain
\begin{equation}
   \frac{1}{2}\le \sqrt{x\cdot C+(C-1)}-\sqrt{x\cdot C} 	< \sqrt{B},
\end{equation} 
i.e., we cannot guarantee a solution but there may be several solutions. For the second case we have to choose $C=B^{k+1}$ for some positive integer $k$ and 
$B^{k-1}\le x\le B^{k}-1$. With this we obtain
\begin{equation}
   \frac{\sqrt{B}}{2}\le \sqrt{x\cdot C+(C-1)}-\sqrt{x\cdot C} < B,
\end{equation}  
i.e. for $B\ge 5$ we can always guarantee a solution, while the total number of solutions is upper bounded by $B$. Note that these estimations are valid for every 
sequence of leading digits, i.e., no special properties are used. So, if $0\notin D$, then there are exactly $|D|^n$ squares and if $0\in D$ there are exactly 
$(|D|-1)\cdot |D|^{n-1}$ squares with the leading digits in $D$ (including the possibility of a leading $0$), provided that $B$ is sufficiently large. 
\begin{definition}
  By $\alpha_B^D$ we denote the number of integers $B^{n-\tfrac{1}{2}}\le x<B^{n+\tfrac{1}{2}}$ such that the first $n$ digits in base $B$ of $x^2$ are contained in $D$.
\end{definition}
From the above we conclude 
\begin{equation}
  \left\lfloor\frac{\sqrt{B}}{2}\right\rfloor \cdot |D|^n \le \alpha_B^D(n)\le \left(\left\lceil\frac{\sqrt{B}}{2}\right\rceil+B\right)\cdot |D|^n. 
\end{equation}
\begin{conjecture}
  \label{conjecture_prefix_asymptotic}
  If $B\in\mathbb{N}_{\ge 2}$ and $D\subseteq \{0,1,\dots,B-1\}$ with $|D|\ge 2$, then there exists a constant $c\in\mathbb{R}_{>0}$ such that
  $\lim\limits_{n\to\infty} \alpha_B^D(n) /|D|^{n}=c$.  
\end{conjecture}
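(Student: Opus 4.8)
The plan is to decompose $\alpha_B^D(n)$ according to the first $n$ digits of $x^2$. Write $S_n:=\{m\in\mathbb N:\ B^{n-1}\le m<B^n,\ D_B(m)\subseteq D\}$, so $|S_n|=\bigl(|D|-[0\in D]\bigr)|D|^{n-1}$. For $B^{n-\tfrac12}\le x<B^n$ the square $x^2$ has exactly $2n$ digits and its leading $n$ digits form the number $\lfloor x^2/B^n\rfloor$, while for $B^n\le x<B^{n+\tfrac12}$ the square $x^2$ has exactly $2n+1$ digits and its leading $n$ digits form $\lfloor x^2/B^{n+1}\rfloor$; in either case this number automatically lies in $[B^{n-1},B^n)$. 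Hence
$$\alpha_B^D(n)=\sum_{m\in S_n}\ \sum_{k\in\{n,\,n+1\}}\#\Bigl(\mathbb Z\cap\bigl[\,\sqrt{mB^k},\ \sqrt{(m+1)B^k}\,\bigr)\Bigr).$$
Using $\#\bigl(\mathbb Z\cap[a,b)\bigr)=(b-a)+\psi(a)-\psi(b)$ with the mean-zero sawtooth $\psi(t):=\{t\}-\tfrac12$ (valid off the rare $m$ for which an endpoint is an integer, which contribute at most $\pm1$ each) I would split $\alpha_B^D(n)=M_n+E_n$, where $M_n$ is the sum of all interval lengths and $E_n$ collects $O(|S_n|)$ sawtooth values at the points $\sqrt{(m+j)B^{n+i}}$, $i,j\in\{0,1\}$, together with the integer-endpoint corrections.

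For the main term, write $\sqrt{m+1}-\sqrt m=\int_m^{m+1}\tfrac{dt}{2\sqrt t}$ to obtain
$$M_n=\bigl(1+\sqrt B\bigr)\sqrt{B^n}\sum_{m\in S_n}\bigl(\sqrt{m+1}-\sqrt m\bigr)=\frac{1+\sqrt B}{2}\,\sqrt{B^n}\int_{\mathcal U_n}\frac{dt}{\sqrt t},\qquad \mathcal U_n:=\bigcup_{m\in S_n}[m,m+1).$$
Substituting $t=B^{n-1}u$ turns $\mathcal U_n$ into the set $\mathcal V_n\subseteq[1,B)$ of reals whose first $n$ base-$B$ digits lie in $D$. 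The sets $\mathcal V_n$ are nested and, apart from the finitely many leading branches, each $\mathcal V_n$ is a disjoint union of $B^{-1}$-scaled copies of $\mathcal V_{n-1}$; hence the normalized measures $\mu_n:=\mathbf 1_{\mathcal V_n}\,dt/|\mathcal V_n|$ converge weakly to the natural self-similar probability measure $\mu_\infty$ on the digit-restricted Cantor set. Since $1/\sqrt t$ is continuous and bounded away from $0$ and $\infty$ on $[1,B)$, this gives $\int\tfrac1{\sqrt t}\,d\mu_n\to c'':=\int\tfrac1{\sqrt t}\,d\mu_\infty\in(0,\infty)$, and combining with $|\mathcal V_n|=|S_n|\,B^{1-n}$ yields
$$M_n\ \sim\ c\,|D|^n,\qquad c=\frac{\sqrt B\,(1+\sqrt B)\,\bigl(|D|-[0\in D]\bigr)}{2\,|D|}\,c''\ >\ 0.$$
As a by-product this identifies the conjectural constant and sharpens the crude two-sided bound recorded just before the conjecture. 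The conjecture is therefore reduced to the single assertion $E_n=o(|D|^n)$.

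The statement $E_n=o(|D|^n)$ is, via the Erd\H{o}s--Tur\'an inequality, implied by (and essentially equivalent to) equidistribution modulo $1$ of the families $\sqrt{(m+j)B^{n+i}}$, $i,j\in\{0,1\}$, $m\in S_n$; depending on the parity of $n$ these are, up to the harmless shift $m\mapsto m+j$, the points $B^{\lfloor n/2\rfloor}\sqrt m$ or $B^{\lfloor n/2\rfloor}\sqrt{Bm}$. This is the step I expect to be the genuine obstacle. Classical methods (van der Corput, the large sieve) give square-root equidistribution when $m$ runs over a full interval, but here $m$ is confined to the sparse self-similar set $S_n$, of density $(|D|/B)^n$ inside $[B^{n-1},B^n)$, and is moreover multiplied by $B^{\lfloor n/2\rfloor}$, a modulus that grows with $n$. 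The natural attack --- expanding the indicator of $S_n$ as a product over digit positions and reducing $\sum_{m\in S_n}e^{2\pi i h\sqrt{mB^n}}$ to classical exponential sums --- fails because $\sqrt{\cdot}$ does not respect the base-$B$ digit decomposition, and I do not see a replacement; this is presumably exactly why the authors record the statement as a conjecture. Realistic intermediate targets are the case $D=\{0,1,\dots,B-1\}$ (where $S_n$ is a full interval and classical equidistribution applies), the case $B=2$ (where necessarily $D=\{0,1\}$), and $E_n=o(|D|^n)$ conditional on a power-saving bound for the above digit-restricted exponential sums.
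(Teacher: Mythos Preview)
The statement is labelled a \emph{conjecture} in the paper and is not proved there; the paper only records the crude two-sided bound
\[
\left\lfloor\tfrac{\sqrt{B}}{2}\right\rfloor\,|D|^{n}\ \le\ \alpha_B^D(n)\ \le\ \left(\left\lceil\tfrac{\sqrt{B}}{2}\right\rceil+B\right)|D|^{n}
\]
and then states the conjecture without further argument. There is therefore no paper proof to compare your attempt against.

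Assessed on its own, your decomposition $\alpha_B^D(n)=M_n+E_n$ is correct, and your evaluation of the main term is sound: the weak convergence of the normalized Lebesgue measures on $\mathcal V_n$ to the self-similar digit measure $\mu_\infty$ is a routine fact (the two measures agree on level-$n$ cylinders up to an error controlled by the modulus of continuity of the test function), and it yields the explicit candidate
\[
c=\frac{\sqrt{B}\,(1+\sqrt{B})\,\bigl(|D|-[0\in D]\bigr)}{2\,|D|}\int_{[1,B)}\frac{d\mu_\infty(u)}{\sqrt{u}}\ \in(0,\infty).
\]
This is genuine progress beyond the paper, which does not identify $c$.

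You are equally right that the whole difficulty lies in $E_n=o(|D|^n)$, and your reformulation via Erd\H{o}s--Tur\'an as equidistribution modulo~$1$ of $B^{\lfloor n/2\rfloor}\sqrt{m}$ (resp.\ $B^{\lfloor n/2\rfloor}\sqrt{Bm}$) over the digitally restricted set $S_n$ is accurate. Your diagnosis of why this resists standard exponential-sum technology --- sparse support of density $(|D|/B)^n$ combined with a modulus growing like $B^{n/2}$ --- is exactly the obstruction, and it is presumably why the authors leave the statement as a conjecture. Your proposal does not overcome it, and you say so. Note that both of your ``intermediate targets'' collapse to the same trivial case: $D=\{0,\dots,B-1\}$ makes $S_n$ a full interval, and $B=2$ with $|D|\ge 2$ forces $D=\{0,1\}$, which is again the full-interval case. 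A more meaningful partial result would be to handle $|D|=B-1$, or to prove $E_n=O(|D|^n/\log n)$ unconditionally.
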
 

\subsection{Search algorithms}
\label{sec_search_algorithms}

Suppose that $P_{B,D}(n)$ is a list of all numbers $x<B^n$ such that the last $n$ digits of $x^2$ in base $B$ are contained in $D$. We can easily compute 
$P_{B,D}(n+1)$ from $P_{B,D}(n)$ by prepending another suitable digit. As shown is Section~\ref{sec_suffices} the possibilities for the new digit do not need to be tried 
but can be computed. If we utilize the recursion from Lemma~\ref{lemma_mult_last_digits} and also store the carryovers, then $P_{B,D}(n)$ 
can be computed in $\# P_{B,D}(n)\cdot O(n)$ operations, which equals $O\!\left(n\cdot |D|^n\right)$ operations if Conjecture~\ref{conjecture_suffix_asymptotic} is true.    
If we use depth-first search than be space requirement is just $O(n)$. We may also compute all $n$-digit number in base $B$ such that there squares consist of digits in $D$ only. 
To this end we have just compute the leading $n$ digits for all elements of $P_{B,D}(n)$ using the recursion from Lemma~\ref{lemma_mult_last_digits}. Since the number 
of candidates while decrease asymptotically, the algorithm will still need $\# P_{B,D}(n)\cdot O(n)$ operations only. 

For exhaustive lists of $n$-prefixes we just have to loop over all $|D|^n$ choices for the leading $n$ digits of the square and compute bounds for the square roots. From 
Inequality~(\ref{conjecture_prefix_asymptotic}) we conclude that $n\cdot |D|^n$ operations are needed. Again, if we use depth-first search than be space requirement is just $O(n)$.
 
Since the conjectures order of magnitude of $\omega_B^D(n)$ as well as $\alpha_B^D(n)$ is $O(|D|^n)$, computing the first $s$ digits using prefixed and the last $n-s$ digits using suffixes, 
we end up with $O(n\cdot |D|^n)$ operations, i.e., $O(n\cdot 3^n)$ operations in our situation where $\#D=3$. Slightly more sophisticated, we can compute 
lists of $p$-digit prefixes and $s$-digit suffixes, so that they overlap in $q$ digits, see e.g.\ \url{http://djm.cc/rpa-output/arithmetic/digits/squares/three.digits.s}. Sticking 
to $\#D=3$ and assuming the truth of Conjecture~\ref{conjecture_suffix_asymptotic}, the computation of the two lists need $O(n\cdot 3^p)$ and $O(n\cdot 3^s)$ operations, respectively. 
If we assume that the overlaps scatter almost evenly over the $10^q$ possible buckets for $B=10$, then combining all possibilities needs $O(n\cdot 3^{p+s}/10^q)$ operations. 
Choosing $p=s=\ln 10/(2\ln 10-\ln 3)\cdot n$ and $q=\ln(3)/\ln(10)\cdot n$ yields a search algorithm that needs roughly $O(n\cdot 3^{0.657n})$ operations. The big drawback is that the 
space requirements are of the same order of magnitude. However, if we compute lists of $k$-suffixes, $k$-prefixes, loop over all $O(3^{2k})$ combinations, and apply the 
just described algorithm for the central $n$ digits, then we end up with a search algorithm using $O((n+k)\cdot 3^{2k+0.657n})$ operations and $O(n\cdot 3^{0.657n})$ space 
for checking all numbers with at most $n+2k$ digits.

\begin{table}[htp!]
  \begin{center}
  \vspace*{-1.7cm}
  {\tiny
    \begin{tabular}{rrrrrrrr}
\hline
$D$ & $c_1$ & $c_2$ & $\#$ $25$-digit & est.\ $20$-digit & est.\ $>25$-digit & est.\ $10-25$-digit & $\#$ $10-25$-digit \\ 
\hline  
$\mathbf{\{0,1,2\}}$ & 0.89 & 1.79 & 8 & 0.19 & 1.03 & 4.52 & 4 \\ 
$\{0,1,3\}$ & 0.30 & 2.08 & 0 & 0.08 & 0.40 & 1.75 & 0 \\
$\mathbf{\{0,1,4\}}$ & 2.10 & 2.17 & 25 & 0.55 & 2.94 & 12.94 & 13 \\
$\{0,1,5\}$ & 0.30 & 2.13 & 5 & 0.08 & 0.41 & 1.79 & 3 \\
$\mathbf{\{0,1,6\}}$ & 1.37 & 2.09 & 7 & 0.35 & 1.85 & 8.13 & 5 \\
$\{0,1,7\}$ & 0.30 & 2.02 & 2 & 0.07 & 0.39 & 1.70 & 1 \\
$\mathbf{\{0,1,8\}}$ & 1.19 & 2.00 & 6 & 0.29 & 1.53 & 6.73 & 4 \\
$\{0,1,9\}$ & 0.59 & 1.93 & 1 & 0.14 & 0.74 & 3.25 & 1 \\
$\mathbf{\{0,2,4\}}$ & 1.04 & 1.94 & 9 & 0.25 & 1.31 & 5.75 & 2 \\
$\mathbf{\{0,2,5\}}$ & 2.96 & 2.05 & 31 & 0.74 & 3.93 & 17.28 & 15 \\
$\{0,2,9\}$ & 0.89 & 2.62 & 9 & 0.28 & 1.50 & 6.61 & 5 \\
$\{0,3,4\}$ & 0.93 & 1.84 & 6 & 0.21 & 1.11 & 4.86 & 2 \\
$\{0,3,6\}$ & 0.46 & 2.12 & 2 & 0.12 & 0.63 & 2.77 & 2 \\
$\{0,3,9\}$ & 0.30 & 2.75 & 1 & 0.10 & 0.53 & 2.32 & 0 \\
$\{0,4,5\}$ & 0.92 & 1.70 & 10 & 0.19 & 1.01 & 4.43 & 5 \\
$\mathbf{\{0,4,6\}}$ & 1.62 & 1.86 & 13 & 0.37 & 1.95 & 8.58 & 9 \\
$\{0,4,7\}$ & 0.90 & 1.99 & 2 & 0.22 & 1.16 & 5.10 & 1 \\
$\mathbf{\{0,4,8\}}$ & 1.78 & 2.16 & 9 & 0.47 & 2.48 & 10.93 & 5 \\
$\mathbf{\{0,4,9\}}$ & 2.05 & 2.74 & 8 & 0.68 & 3.63 & 15.96 & 6 \\
$\{0,5,6\}$ & 0.38 & 1.57 & 1 & 0.07 & 0.39 & 1.69 & 1 \\
$\{0,5,9\}$ & 0.30 & 2.62 & 2 & 0.09 & 0.50 & 2.20 & 2 \\
$\{0,6,7\}$ & 0.44 & 1.47 & 4 & 0.08 & 0.42 & 1.84 & 1 \\
$\{0,6,9\}$ & 1.36 & 2.44 & 4 & 0.40 & 2.15 & 9.44 & 1 \\
$\{0,7,9\}$ & 0.30 & 2.19 & 1 & 0.08 & 0.42 & 1.85 & 1 \\
$\{0,8,9\}$ & 1.19 & 1.50 & 2 & 0.22 & 1.15 & 5.03 & 1 \\
$\{1,2,3\}$ & 0.59 & 2.43 & 12 & 0.17 & 0.93 & 4.09 & 5 \\
$\mathbf{\{1,2,4\}}$ & 1.87 & 2.84 & 19 & 0.65 & 3.44 & 15.13 & 8 \\
$\mathbf{\{1,2,5\}}$ & 2.07 & 3.00 & 29 & 0.76 & 4.02 & 17.65 & 17 \\
$\{1,2,6\}$ & 1.03 & 3.00 & 12 & 0.38 & 2.00 & 8.80 & 3 \\
$\{1,2,7\}$ & 0.59 & 3.00 & 4 & 0.22 & 1.15 & 5.06 & 2 \\
$\mathbf{\{1,2,8\}}$ & 0.89 & 2.92 & 11 & 0.32 & 1.68 & 7.38 & 6 \\
$\{1,2,9\}$ & 1.19 & 2.91 & 12 & 0.42 & 2.23 & 9.80 & 4 \\
$\{1,3,4\}$ & 0.79 & 2.77 & 9 & 0.27 & 1.41 & 6.20 & 4 \\
$\{1,3,6\}$ & 1.54 & 3.28 & 18 & 0.61 & 3.26 & 14.32 & 6 \\
$\{1,3,8\}$ & 0.30 & 3.32 & 2 & 0.12 & 0.64 & 2.80 & 1 \\
$\{1,4,5\}$ & 0.57 & 2.85 & 8 & 0.20 & 1.05 & 4.61 & 4 \\
$\{1,4,6\}$ & 2.46 & 3.18 & 38 & 0.95 & 5.05 & 22.22 & 21 \\
$\{1,4,7\}$ & 0.78 & 3.34 & 8 & 0.32 & 1.68 & 7.38 & 5 \\
$\{1,4,8\}$ & 2.02 & 3.36 & 21 & 0.83 & 4.39 & 19.30 & 13 \\
$\{1,4,9\}$ & 0.91 & 3.55 & 9 & 0.39 & 2.08 & 9.15 & 3 \\
$\mathbf{\{1,5,6\}}$ & 1.47 & 2.74 & 22 & 0.49 & 2.59 & 11.40 & 15 \\
$\{1,5,8\}$ & 0.30 & 3.23 & 4 & 0.12 & 0.62 & 2.72 & 3 \\
$\{1,6,7\}$ & 1.45 & 2.64 & 8 & 0.47 & 2.48 & 10.89 & 1 \\
$\{1,6,8\}$ & 1.37 & 3.00 & 21 & 0.50 & 2.65 & 11.66 & 9 \\
$\{1,6,9\}$ & 2.06 & 3.35 & 21 & 0.84 & 4.46 & 19.62 & 11 \\
$\{1,7,8\}$ & 0.30 & 2.53 & 3 & 0.09 & 0.48 & 2.13 & 0 \\
$\{1,8,9\}$ & 0.59 & 2.49 & 8 & 0.18 & 0.95 & 4.20 & 3 \\
$\{2,3,4\}$ & 0.86 & 2.07 & 8 & 0.22 & 1.15 & 5.05 & 2 \\
$\{2,3,5\}$ & 1.48 & 2.41 & 12 & 0.43 & 2.31 & 10.14 & 2 \\
$\{2,3,6\}$ & 0.40 & 2.62 & 3 & 0.13 & 0.67 & 2.94 & 3 \\
$\{2,3,9\}$ & 0.59 & 2.66 & 11 & 0.19 & 1.02 & 4.47 & 5 \\
$\mathbf{\{2,4,5\}}$ & 2.44 & 2.35 & 18 & 0.70 & 3.70 & 16.28 & 13 \\
$\mathbf{\{2,4,6\}}$ & 1.43 & 2.63 & 13 & 0.46 & 2.43 & 10.69 & 6 \\
$\{2,4,7\}$ & 0.79 & 2.83 & 2 & 0.27 & 1.44 & 6.35 & 2 \\
$\{2,4,8\}$ & 1.05 & 2.85 & 11 & 0.36 & 1.93 & 8.50 & 7 \\
$\{2,4,9\}$ & 1.87 & 3.05 & 24 & 0.70 & 3.70 & 16.25 & 11 \\
$\{2,5,6\}$ & 4.29 & 2.52 & 35 & 1.31 & 6.98 & 30.68 & 16 \\
$\mathbf{\{2,5,7\}}$ & 1.48 & 2.80 & 12 & 0.50 & 2.68 & 11.79 & 4 \\
$\{2,5,8\}$ & 1.48 & 3.02 & 15 & 0.54 & 2.90 & 12.73 & 10 \\
$\{2,5,9\}$ & 2.07 & 3.28 & 17 & 0.83 & 4.40 & 19.33 & 5 \\
$\{2,6,7\}$ & 0.53 & 2.45 & 4 & 0.16 & 0.85 & 3.72 & 3 \\
$\{2,6,9\}$ & 0.91 & 3.13 & 13 & 0.35 & 1.84 & 8.09 & 8 \\
$\{2,7,9\}$ & 0.59 & 2.86 & 6 & 0.21 & 1.10 & 4.82 & 3 \\
$\{2,8,9\}$ & 0.89 & 2.44 & 6 & 0.26 & 1.40 & 6.17 & 3 \\
$\{3,4,5\}$ & 0.48 & 1.93 & 11 & 0.11 & 0.60 & 2.62 & 5 \\
$\{3,4,6\}$ & 1.22 & 2.20 & 12 & 0.33 & 1.74 & 7.64 & 5 \\
$\{3,4,7\}$ & 0.69 & 2.44 & 5 & 0.21 & 1.10 & 4.82 & 4 \\
$\{3,4,8\}$ & 0.91 & 2.45 & 7 & 0.27 & 1.44 & 6.35 & 5 \\
$\{3,4,9\}$ & 0.76 & 2.54 & 4 & 0.23 & 1.24 & 5.47 & 1 \\
$\{3,5,6\}$ & 0.94 & 2.11 & 14 & 0.24 & 1.29 & 5.67 & 4 \\
$\{3,6,7\}$ & 0.98 & 2.29 & 5 & 0.27 & 1.45 & 6.37 & 2 \\
$\{3,6,8\}$ & 0.44 & 2.54 & 4 & 0.14 & 0.72 & 3.18 & 3 \\
$\{3,6,9\}$ & 1.61 & 2.85 & 13 & 0.56 & 2.96 & 13.00 & 6 \\
$\{3,8,9\}$ & 0.30 & 2.22 & 4 & 0.08 & 0.42 & 1.86 & 2 \\
$\{4,5,6\}$ & 1.41 & 1.77 & 21 & 0.30 & 1.61 & 7.08 & 11 \\
$\{4,5,7\}$ & 0.46 & 2.02 & 8 & 0.11 & 0.60 & 2.63 & 3 \\
$\{4,5,8\}$ & 0.92 & 2.24 & 6 & 0.25 & 1.33 & 5.83 & 2 \\
$\{4,5,9\}$ & 0.61 & 2.34 & 6 & 0.17 & 0.92 & 4.06 & 2 \\
$\{4,6,7\}$ & 1.08 & 1.96 & 7 & 0.26 & 1.37 & 6.02 & 6 \\
$\{4,6,8\}$ & 1.63 & 2.19 & 11 & 0.44 & 2.31 & 10.17 & 4 \\
$\{4,6,9\}$ & 2.39 & 2.56 & 16 & 0.74 & 3.94 & 17.34 & 9 \\
$\{4,7,8\}$ & 0.91 & 2.12 & 4 & 0.23 & 1.24 & 5.45 & 2 \\
$\{4,7,9\}$ & 0.74 & 2.50 & 3 & 0.23 & 1.20 & 5.28 & 1 \\
$\mathbf{\{4,8,9\}}$ & 2.11 & 2.18 & 9 & 0.56 & 2.98 & 13.09 & 1 \\
$\{5,6,7\}$ & 0.78 & 1.61 & 7 & 0.15 & 0.81 & 3.56 & 6 \\
$\{5,6,8\}$ & 0.33 & 1.84 & 1 & 0.07 & 0.39 & 1.71 & 0 \\
$\{5,6,9\}$ & 1.46 & 2.07 & 9 & 0.37 & 1.96 & 8.61 & 3 \\
$\{5,8,9\}$ & 0.30 & 1.97 & 2 & 0.07 & 0.38 & 1.66 & 1 \\
$\{6,7,8\}$ & 0.44 & 1.49 & 0 & 0.08 & 0.42 & 1.87 & 0 \\
$\{6,7,9\}$ & 1.54 & 1.73 & 11 & 0.32 & 1.73 & 7.58 & 7 \\
$\{6,8,9\}$ & 1.41 & 1.70 & 12 & 0.29 & 1.55 & 6.80 & 4 \\
$\{7,8,9\}$ & 0.30 & 1.41 & 1 & 0.05 & 0.27 & 1.19 & 1 \\
\hline
    \end{tabular}}
    \caption{Estimations for the number of sporadic solutions.}
    \label{table_est_sporadic_solutions}
  \end{center}
\end{table}

Let us consider the following heuristic argument. For a given base $B\ge 2$ and a subset $D\subseteq\{0,1,\dots,B-1\}$ of allowed digits we assume that there exist 
constants $c_1,c_2\in\mathbb{R}_{\ge 0}$ such that the number of $k$-suffixes is roughly $c_1\cdot |D|^k$ and the number of $m$-prefixes is roughly $c_2\cdot |D|^m$. 
Combining an $m$-prefix with a $k$-suffix gives an $n$-digit root $r$, where $n=m+k$. By construction the last $k$ digits of $r^2$ are contained in $D$. If the 
carryovers don't interfere, then also the first $m$ digits of $r^2$ are contained in $D$. Noting that $r^2$ contains either $2n-1$ or $2n$ digits, the $n-1$ or $n$ 
digits in the center have to be checked. For each digit we assume an independent probability of $|D|/B$. Under this assumptions the expectation for an 
$n$-digit root $r$ with $\varpi_B(r^2)=D$ is given by
\begin{equation}
  c_1c_2|D|^n \cdot \left(\frac{|D|}{B}\right)^{n-s}= c_1c_2\cdot\frac{|D|^{2n-s}}{B^{n-s}}, 
\end{equation}  
where $s\in \{0,1\}$. For simplicity we assume $s=0$, so that the expectation is roughy $c_1c_2\left(\frac{|D|^2}{B}\right)^n$. 
By summing we conclude that the expected number of root solutions with at least $n$ digits is given by 
\begin{equation}
  c_1c_2\left(\frac{|D|^2}{B}\right)^n\cdot \frac{B}{B-|D|}.
\end{equation}
For $B=10$ and $|D|=3$ this expectation tends to zero for increasing $n$, so that the existence of infinite patterns shows that our independence assumption is wrong 
in general. Nevertheless the corresponding estimations are not too bad in many cases as shown exemplarily in Table~\ref{table_est_sporadic_solutions}. Here we have chosen 
the values for $c_1$ and $c_2$ according to the number of $10$-suffixes and $10$-prefixes. We give counts for the number of sporadic root solutions of length $25$ or 
length between $10$ and $25$. Estimations are stated for the cases of $20$-, $>25$, and $10-25$ digits. Removing the solutions from infinite patterns certainly has 
an effect, so that we mark the triples $D$ of allowed digits that permit infinite patterns in bold face. Triples $D$ where $c_1c_2$ is relatively large allow infinite patterns 
in many cases. A notably exception is $D=\{1,4,6\}$, which seems to allow no infinite pattern but many sporadic solutions. Here we have $c_1c_2=7.82$, the estimated number of 
solutions with $10-25$ digits is $22.22$ and there are indeed $21$ of them. The estimation for the number of solutions with more than $25$ digits is even as large as $5.05$, 
so that searching more extensively seems to make sense.
 

\appendix
\renewcommand{\thesubsection}{\Alph{subsection}}
\section*{Appendix}
\subsection{Results for base $\mathbf{B=9}$}
\label{sec_b_9}

In Table~\ref{table_explanations_b_9_patterns} we list a few infinite patterns in base $B=9$. Since $B=3^2$ we have 
$\varphi_9((3N)^2)=\varphi_9(N^2)\cup\{0\}$. So, for the cases $0\in D$ of allowed digits we assume that $3$ does not divide $N$. 
A corresponding {\lq\lq}dual pattern{\rq\rq} always exists and can be constructed easily. For applications of 
Theorem~\ref{thm_square_digits_and_double_square_digits_everywhere} with $u=1$ we only give three examples. Due to the relation between 
examples with decimal digits in $\{0,1,2\}$ and those with decimal digits in $\{0,4,8\}$, see Lemma~\ref{lemma_multiplied_triples}, we 
have dropped the patterns for the latter. Patterns constructed by Lemma~\ref{lemma_r_s_construction} come in pairs if $r\neq s$ and we 
only state the examples with $r\le s$ or $r>s$ if $s$ would be divisible by $3$ otherwise.  

\begin{table}[htp]
  \begin{center}
    \begin{tabular}{cccc}
      \hline
      pattern & condition & $D$ & explanation \\ 
      \hline
      $1(0)_n1$ & $n\ge 0$ & $\{0,1,2\}$ & Theorem~\ref{thm_square_digits_and_double_square_digits_everywhere} with $u=1$\\
      $1(0)_n1(0)_{n+1}1$ & $n\ge 0$ & $\{0,1,2\}$ & Theorem~\ref{thm_square_digits_and_double_square_digits_everywhere} with $u=1$\\
      $1(0)_n11$ & $n\ge 1$ & $\{0,1,2\}$ & Theorem~\ref{thm_square_digits_and_double_square_digits_everywhere} with $u=1$\\
      $1(0)_n1(0)_n4(0)_n1(0)_n1$ & $n\ge 1$ & $\{0,1,2\}$ & Lemma~\ref{lemma_ones_with_4_in_center_b9}\\   
      $1(0)_n5(0)_n1$ & $n\ge 1$ & $\{0,1,3\}$ & Lemma~\ref{lemma_two_blocks_of_zeros} with $\left(r_0,r_1,r_2;c\right)=(1,5,1;0)$\\
      $1(0)_n16(0)_n1$ & $n\ge 1$ & $\{0,1,3\}$ & Lemma~\ref{lemma_two_blocks_of_zeros} with $\left(r_0,r_1,r_2;c\right)=(1,15,1;1)$\\
      $1(0)_n2$ & $n\ge 0$ & $\{0,1,4\}$ & Lemma~\ref{lemma_r_s_construction} with $r=1$, $s=2$\\
      $1(0)_n2(0)_k2(0)_n1$ & $n>k\ge 1$ & $\{0,1,4\}$ & Lemma~\ref{lemma_1_2_construction_for_base_9_gen}\\
      $2(0)_n1(0)_k1(0)_n2$ & $n>k\ge 1$ & $\{0,1,4\}$ & Lemma~\ref{lemma_1_2_construction_for_base_9_gen}\\
      $1(0)_n 20222(0)_{n+3}1$ & $n\ge 1$ & $\{0,1,4\}$ & Lemma~\ref{lemma_two_blocks_of_zeros} with $\left(r_0,r_1,r_2;c\right)=(1,13304,1;1)$\\
      $21(0)_n85(0)_{n}21$ & $n\ge 1$ & $\{0,1,4\}$ & Lemma~\ref{lemma_two_blocks_of_zeros} with $\left(r_0,r_1,r_2;c\right)=(19,77,19;1)$\\      
      $3(0)_n1$ & $n\ge 0$ & $\{0,1,6\}$ & Lemma~\ref{lemma_r_s_construction} with $r=3$, $s=1$\\      
      $126(0)_n38$ & $n\ge 2$ & $\{0,1,6\}$ & Lemma~\ref{lemma_r_s_construction} with $r=105$, $s=35$\\      
      $1(0)_n8$ & $n\ge 0$ & $\{0,1,7\}$ & Lemma~\ref{lemma_r_s_construction} with $r=1$, $s=8$\\      
      $4(0)_n8$ & $n\ge 0$ & $\{0,1,7\}$ & Lemma~\ref{lemma_r_s_construction} with $r=4$, $s=8$\\
      $8(0)_n10434$ & $n\ge 2$ & $\{0,1,7\}$ & Lemma~\ref{lemma_r_s_construction} with $r=8$, $s=6916$\\
      $8(0)_n4(0)_n8$ & $n\ge 1$ & $\{0,1,7\}$ & Lemma~\ref{lemma_two_blocks_of_zeros} with $\left(r_0,r_1,r_2;c\right)=(8,4,8;0)$\\      
      $8(0)_n10434(0)_{n+2}8$ & $n\ge 2$ & $\{0,1,7\}$ & Lemma~\ref{lemma_two_blocks_of_zeros} with $\left(r_0,r_1,r_2;c\right)=(8,6916,8;2)$\\      
      $1(0)_{n+1}8(0)_n1$ & $n\ge 1$ & $\{0,1,7\}$ & Lemma~\ref{lemma_two_blocks_of_zeros} with $\left(r_0,r_1,r_2;c\right)=(1,8,1;0)$\\      
      $8(0)_n104(0)_{n+2}8$ & $n\ge 2$ & $\{0,1,7\}$ & Lemma~\ref{lemma_two_blocks_of_zeros} with $\left(r_0,r_1,r_2;c\right)=(8,85,8;0)$\\
      $(8)_n5(0)_n1$ & $n\ge 1$ & $\{0,1,8\}$ & Lemma~\ref{lemma_b9_infinite_1paramt_1}\\      
      $1 (0)_m(8)_k5(0)_k1$ & $2\le k+1\le m<2k$ & $\{0,1,8\}$ & Lemma~\ref{lemma_b9_infinite_2paramt}\\     
      $(4)_n5$ & $n\ge 1$ & $\{2,6,7\}$ & Section~\ref{subsec_infinite_non_zero}\\
      \hline
    \end{tabular}
    \caption{Explanations for some infinite patterns in base $B=9$.}
    \label{table_explanations_b_9_patterns}  
  \end{center}
\end{table}

\begin{lemma}
  \label{lemma_1_2_construction_for_base_9_gen}
  For $B=9$ let
  $$
    N_1=\big[1,\underset{k\text{ times}}{\underbrace{0,\dots,0}},2,\underset{n\text{ times}}{\underbrace{0,\dots,0}},2,\underset{k\text{ times}}{\underbrace{0,\dots,0}},1\big]_{B} 
    =1+2\cdot B^{k+1}+2\cdot B^{n+k+2}+1\cdot B^{n+2k+3}
  $$
  and
  $$
    N_2=\big[2,\underset{k\text{ times}}{\underbrace{0,\dots,0}},1,\underset{n\text{ times}}{\underbrace{0,\dots,0}},1,\underset{k\text{ times}}{\underbrace{0,\dots,0}},2\big]_{B} 
    =2+1\cdot B^{k+1}+1\cdot B^{n+k+2}+2\cdot B^{n+2k+3}.
  $$
  Then, $N_1^2$ and $N_2^2$ have digits in $\{0,1,4\}$ in base $B$ if 
  $$
    k+n+2,\,\, 2k+2,\,\, 2k+n+3,\,\, 2k+n+4,\,\, 2k+2n+4,\text{ and }3k+n+4
  $$
  are pairwise different. The conditions are e.g.\ satisfied if $k>n$.
\end{lemma}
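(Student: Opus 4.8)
The plan is to follow the proof of Lemma~\ref{lemma_1_2_construction_for_base_10_gen} almost verbatim, the one new wrinkle being that for $B=9$ one has $10=B+1$ rather than $10=B$, so the single coefficient $10$ that appears produces a carry. Concretely, I would write $N_1=r\cdot B^{n+k+2}+s$ with $r=B^{k+1}+2$ and $s=2B^{k+1}+1$, and $N_2$ with the roles of $r$ and $s$ swapped, so that $N_1^2=r^2B^{2n+2k+4}+2rs\cdot B^{n+k+2}+s^2$. Since $r^2=B^{2k+2}+4B^{k+1}+4$, $s^2=4B^{2k+2}+4B^{k+1}+1$, and $2rs=4B^{2k+2}+10B^{k+1}+4$, and since $10\cdot B^{k+1}=B^{k+2}+B^{k+1}$ for $B=9$, each of $r^2$, $s^2$, $2rs$ already has all its base-$9$ digits in $\{0,1,4\}$. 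Note that I would \emph{not} invoke Lemma~\ref{lemma_r_s_construction} itself, whose block-separation hypothesis $n\ge\max\{l_B(2rs),l_B(s^2)\}-l_B(s)=k+1$ fails precisely in the interesting range $k>n$; instead I just multiply the three pieces out by hand.

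Carrying this out, I expect to obtain
$$N_1^2=1+4B^{k+1}+4B^{2k+2}+4B^{k+n+2}+B^{2k+n+3}+B^{2k+n+4}+4B^{3k+n+4}+4B^{2k+2n+4}+4B^{3k+2n+5}+B^{4k+2n+6}$$
and the analogous expression for $N_2^2$ with coefficient pattern $(4,4,1,4,1,1,4,1,4,4)$ on the same ten powers of $B$. Here the exponent $2k+n+3$ (absent in the base-$10$ version) survives because the two cross-terms $2B^{2k+n+3}$ and $8B^{2k+n+3}$ combine to $10B^{2k+n+3}=B^{2k+n+4}+B^{2k+n+3}$, which is the lone carry. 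Since all ten coefficients lie in $\{1,4\}\subseteq\{0,\dots,B-1\}$, once the ten exponents
$$0,\ k+1,\ 2k+2,\ k+n+2,\ 2k+n+3,\ 2k+n+4,\ 3k+n+4,\ 2k+2n+4,\ 3k+2n+5,\ 4k+2n+6$$
are pairwise distinct, these displays \emph{are} the base-$9$ representations of $N_1^2$ and $N_2^2$, whence $D_9(N_1^2),D_9(N_2^2)\subseteq\{0,1,4\}$.

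It then remains to reduce ``all ten exponents distinct'' to ``the six exponents in the statement distinct''. The key observation is that $0<k+1$ are strictly the two smallest of the ten, and $3k+2n+5<4k+2n+6$ strictly the two largest, for all $k,n\ge 0$, since each of $k+n+2$, $2k+2$, $2k+n+3$, $2k+n+4$, $2k+2n+4$, $3k+n+4$ lies strictly between $k+1$ and $3k+2n+5$. Hence none of these four ``extreme'' exponents can coincide with any other exponent, and the only possible collisions are among the six ``middle'' ones — which is exactly the hypothesis. Finally I would record the fifteen pairwise differences of the six middle exponents; the only ones not automatically nonzero are $2k+2-(k+n+2)=k-n$, $2k+2n+4-(2k+n+4)=n$, $3k+n+4-(2k+n+4)=k$, and $3k+n+4-(2k+2n+4)=k-n$, all of which are positive when $k>n\ge 1$, giving the stated sufficient condition.

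The only point of real care — the ``main obstacle'', such as it is — is getting the base-$9$ carry right: one must check that the fresh power $B^{2k+n+4}$ it creates does not coincide with any pre-existing term (none of the other nine raw exponents equals $2k+n+4$), and that exactly the two cross-terms land on $B^{2k+n+3}$, so that after this single carry one is left with ten clean positions carrying coefficients $1$ or $4$. Everything else is the routine verification that the two smallest and the two largest of the ten exponents are genuinely extreme, which I carried out above.
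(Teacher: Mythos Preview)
Your proof is correct and follows essentially the same route as the paper: expand $N_1^2$ and $N_2^2$ directly, observe that the unique coefficient $10$ at $B^{2k+n+3}$ splits as $B^{2k+n+4}+B^{2k+n+3}$ since $10=9+1$, and then reduce distinctness of the ten resulting exponents to distinctness of the six ``middle'' ones. Your write-up is in fact more careful than the paper's --- you make explicit why $0$, $k+1$, $3k+2n+5$, $4k+2n+6$ can never collide with anything, and you actually list the pairwise differences to verify the sufficient condition $k>n\ge 1$ (the paper just asserts $k>n$; your added $n\ge 1$ is needed for the pair $(2k+n+4,\,2k+2n+4)$).
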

\begin{proof}
  We have
  $$
    N_1^2= 1 + 4\cdot B^{k+1} + 4\cdot B^{k+n+2}  +4\cdot B^{2k+2} + 10\cdot B^{2k+n+ 3}  +  4\cdot B^{2k+2n+4} + 4\cdot B^{3k+n+4}  +4\cdot B^{3k+2n+5}  + 1\cdot B^{4k+2n+6}
  $$
  and
  $$
    N_2^2=4 + 4\cdot B^{k+1} + 4\cdot B^{k+n+2} + 1\cdot B^{2k+2} + 10\cdot B^{2k+n+3}  + 1\cdot B^{2k+2n+4} + 4\cdot B^{3k+n+4}  + 4\cdot B^{3k+2n+5} + 4\cdot B^{4k+2n+6}.
  $$
  In base $B=9$ we can replace the terms $10\cdot B^{2k+n+ 3}$ by $1\cdot B^{2k+n+ 3}+1\cdot B^{2k+n+ 4}$. Note that the pairs of exponents that are not mentioned in the 
  statement cannot coincide.
\end{proof}

\begin{lemma}
  \label{lemma_ones_with_4_in_center_b9}
  For each $n\ge 2$ and $B=9$ the integer
  \begin{equation}
    N=1+1\cdot B^n+4\cdot B^{2n}+1\cdot B^{3n}+1\cdot B^{4n}
  \end{equation}
  satisfies $D_B(N^2)=\{0,1,2\}$.
\end{lemma}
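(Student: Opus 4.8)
The plan is to treat $N$ as a polynomial in the single quantity $a:=B^n=9^n$, square it, and then use the defining relation $9=B$ of the base to shrink every coefficient below $B$; once all coefficients are digits and all exponents are pairwise distinct, the base-$B$ representation of $N^2$ can simply be read off, with no carries.

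First I would write $N=a^4+a^3+4a^2+a+1$ and expand
\[
  N^2=a^8+2a^7+9a^6+10a^5+20a^4+10a^3+9a^2+2a+1 ;
\]
this is the only genuine computation, and it is routine, the coefficient of $a^k$ being $\sum_{i+j=k}p_ip_j$ with $(p_0,p_1,p_2,p_3,p_4)=(1,1,4,1,1)$. Since $B=9$, I would then rewrite the three coefficients exceeding $8$ via $9=B$, $10=B+1$, $20=2B+2$ (splitting each offending term into terms whose coefficients are at most $2$), obtaining
\[
  N^2=B^{8n}+2B^{7n}+B^{6n+1}+B^{5n+1}+B^{5n}+2B^{4n+1}+2B^{4n}+B^{3n+1}+B^{3n}+B^{2n+1}+2B^{n}+1 .
\]

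Next I would verify that, for $n\ge 2$, the twelve exponents
\[
  0<n<2n+1<3n<3n+1<4n<4n+1<5n<5n+1<6n+1<7n<8n
\]
are strictly increasing, hence pairwise distinct; each of these strict inequalities is equivalent to $n\ge 1$ or to $n\ge 2$, and the four that need $n\ge 2$ (namely $2n+1<3n$, $3n+1<4n$, $4n+1<5n$, $6n+1<7n$) are exactly where the hypothesis is used. Since all coefficients in the last display lie in $\{1,2\}\subseteq\{0,\dots,B-1\}$, no carries occur, so that display already is the base-$B$ representation of $N^2$: the digit $2$ occurs (at the positions $n,4n,4n+1,7n$), the digit $1$ occurs (at the positions $0,2n+1,3n,3n+1,5n,5n+1,6n+1,8n$), and the digit $0$ occurs as well because, e.g., position $1$ lies in $\{0,\dots,8n\}$ but is absent from that list for $n\ge 2$. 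Hence $D_B(N^2)=\{0,1,2\}$.

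I do not expect a real obstacle: the polynomial expansion is mechanical and the distinctness check is a short list of inequalities. The only point demanding care is the carry-free rewriting — it is precisely at $n=1$ that exponents such as $2n+1$ and $3n$ collide (and then carries would actually arise), which is exactly why the statement is restricted to $n\ge 2$.
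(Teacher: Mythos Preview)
Your proof is correct and follows exactly the same approach as the paper: expand $N^2$ as a polynomial in $B^n$, then use $9=B$, $10=B+1$, $20=2B+2$ to reduce coefficients to digits. Your version is in fact more careful than the paper's, since you explicitly verify that the resulting exponents are pairwise distinct for $n\ge 2$ and that the digit $0$ actually occurs, whereas the paper leaves these checks implicit.
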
  
\begin{proof}
  We compute 
  $$ 
    N^2= 1+ 2\cdot B^n + 9\cdot B^{2n} + 10\cdot B^{3n} + 20\cdot B^{4n} + 10\cdot B^{5n} + 9\cdot B^{6n} + 2\cdot B^{7n} + B^{8n},
  $$  
  so that $9=1\cdot 9+0$, $10=1\cdot 9+1$, and $20=2\cdot 9+2$ imply the stated result.
\end{proof}

\begin{lemma}
  \label{lemma_b9_infinite_1paramt_1}
  For $B=9$ let
  \begin{equation}
    N=1 + 5\cdot B^k + (B^{k-1}-1)\cdot B^{k+1}.
  \end{equation} 
  If $k\ge 2$, then $\varphi_9(N^2)=\{0,1,B-1\}=\{0,1,8\}$.
\end{lemma}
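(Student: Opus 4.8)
The plan is to verify the claimed base-$9$ digit string of $N^2$ by a direct computation, exactly in the spirit of Lemma~\ref{lemma_all_digit_equal_x_squared} and Lemma~\ref{lemma_1_2_construction_for_base_10_gen}. Put $B=9$. Using $(B^{k-1}-1)B^{k+1}=B^{2k}-B^{k+1}$ and $5\cdot 9^{k}-9^{k+1}=-4\cdot 9^{k}$, the integer $N$ collapses to $N=9^{2k}-4\cdot 9^{k}+1$, whence
\[
  N^{2}=9^{4k}-8\cdot 9^{3k}+18\cdot 9^{2k}-8\cdot 9^{k}+1 .
\]

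Next I would write down the candidate representation
\[
  M:=\Big[\underset{k-1\text{ times}}{\underbrace{8,\dots,8}},\,1,\,\underset{k-2\text{ times}}{\underbrace{0,\dots,0}},\,1,\,\underset{k\text{ times}}{\underbrace{8,\dots,8}},\,1,\,\underset{k-1\text{ times}}{\underbrace{0,\dots,0}},\,1\Big]_{9}
\]
(the shape is already transparent from the cases $k=2$, which gives $[8,1,1,8,8,1,0,1]_{9}$, and $k=3$, which gives $[8,8,1,0,1,8,8,8,1,0,0,1]_{9}$) and check that $M=N^{2}$. Reading from the units place, $M$ carries the digit $1$ in positions $0,k,2k+1,3k$, the digit $8$ in positions $k+1,\dots,2k$ and $3k+1,\dots,4k-1$, and the digit $0$ everywhere else below position $4k$; for $k\ge 2$ these index sets are pairwise disjoint, the leading position $4k-1$ carries an $8\neq 0$, and the block $\underset{k-2\text{ times}}{\underbrace{0,\dots,0}}$ is empty when $k=2$. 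Summing the two geometric progressions by means of $\sum_{i=a}^{b}8\cdot 9^{i}=9^{a}\big(9^{\,b-a+1}-1\big)$ gives $\sum_{i=k+1}^{2k}8\cdot 9^{i}=9^{2k+1}-9^{k+1}$ and $\sum_{i=3k+1}^{4k-1}8\cdot 9^{i}=9^{4k}-9^{3k+1}$, so
\[
  M=1+9^{k}+9^{2k+1}+9^{3k}+\big(9^{2k+1}-9^{k+1}\big)+\big(9^{4k}-9^{3k+1}\big)=9^{4k}-8\cdot 9^{3k}+18\cdot 9^{2k}-8\cdot 9^{k}+1=N^{2},
\]
where the last equality uses $9^{3k}-9^{3k+1}=-8\cdot 9^{3k}$, $2\cdot 9^{2k+1}=18\cdot 9^{2k}$, and $9^{k}-9^{k+1}=-8\cdot 9^{k}$. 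Since every digit of $M$ lies in $\{0,1,8\}$ and each of $0$, $1$, and $8=B-1$ genuinely occurs (the block $\underset{k-1\text{ times}}{\underbrace{0,\dots,0}}$, some $1$, and the leading $8$), this yields $\varphi_{9}(N^{2})=\{0,1,B-1\}$.

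I do not expect a real obstacle: once the candidate pattern is written down, the whole argument reduces to summing two finite geometric series and comparing coefficients of powers of $9$. The only point deserving a word of care is the hypothesis $k\ge 2$; it is precisely what makes the digit blocks non-overlapping (note that $2k+1<3k$ already forces $k\ge 2$) and hence the displayed string a bona fide base-$9$ representation of $N^2$, and it is also what excludes the genuinely different small case $k=1$, where $N=46$ and $46^{2}=[2,8,1,1]_{9}$ involves the digit $2$.
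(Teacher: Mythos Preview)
Your proof is correct and follows essentially the same approach as the paper: compute $N^2$ explicitly, then exhibit its base-$9$ digit string and verify that only $0$, $1$, $8$ occur, with the disjointness of the digit blocks guaranteed by $k\ge 2$. The only cosmetic difference is that you first collapse $N$ to the tidier form $9^{2k}-4\cdot 9^k+1$ before squaring, whereas the paper squares the original three-term expression and then simplifies using $B=9$ identities; both routes land on the same representation $1+B^k+B^{k+1}(B^k-1)+B^{2k+1}+B^{3k}+B^{3k+1}(B^{k-1}-1)$.
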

\begin{proof}
  We compute
  \begin{eqnarray*}
  N^2 &=& 1+ 10\cdot B^k- 2\cdot B^{k + 1} + 27\cdot B^{2k}- 10\cdot B^{2k + 1} + B^{2k + 2} + 10\cdot B^{3k}- 2\cdot B^{3k + 1} + B^{4k}\\ 
  &\overset{B=9}{=}& 1+ 1\cdot B^k- 1\cdot B^{k + 1} + 2\cdot B^{2k+1}  + 1\cdot B^{3k}- 1\cdot B^{3k + 1} + B^{4k}\\ 
  &=& 1+ 1\cdot B^k +  B^{k + 1}\cdot \left(B^k-1\right) + 1\cdot B^{2k+1} + 1\cdot B^{3k} + B^{3k+1}\cdot\left(B^{k-1}-1\right). 
  \end{eqnarray*}
  The sequences $B^x\cdot\left(B^y-1\right)$ correspond to a sequence of $y$ times digit $B-1=8$. Note that for $k\ge 2$ the indicated 
  digits $1$ and $8$ do not interfere. 
\end{proof}  

\begin{lemma}
  \label{lemma_b9_infinite_2paramt}
  For $B=9$ let
  \begin{equation}
    N=1 + 5\cdot B^k + (B^{k-1}-1)\cdot B^{k+1} + B^m.
  \end{equation} 
  If $k\ge 2$ and $3k+1\le m<4k$, then $\varphi_9(N^2)=\{0,1,B-1\}=\{0,1,8\}$.
\end{lemma}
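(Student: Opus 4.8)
The plan is to piggyback on Lemma~\ref{lemma_b9_infinite_1paramt_1}. Set $N_0 := 1 + 5B^k + (B^{k-1}-1)B^{k+1}$, the integer handled there, so that $N = N_0 + B^m$ and therefore
\[ N^2 = N_0^2 + 2N_0 B^m + B^{2m}. \]
The proof of Lemma~\ref{lemma_b9_infinite_1paramt_1} already gives the full base-$9$ digit string of $N_0^2$: writing ``position $i$'' for the coefficient of $B^i$, $N_0^2$ has digit $1$ at positions $0$, $k$, $2k+1$, $3k$, digit $B-1=8$ at positions $k+1,\dots,2k$ and at positions $3k+1,\dots,4k-1$, and digit $0$ at all other positions (the top nonzero position being $4k-1$). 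So the task is to check that adding $2N_0 B^m + B^{2m}$ keeps all digits in $\{0,1,B-1\}$.

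First I would record the base-$9$ digits of $2N_0$. Since $5B^k - B^{k+1} = (5-B)B^k = -4B^k$ when $B=9$, we get $N_0 = B^{2k} - 4B^k + 1$, hence $2N_0 = 2B^{2k} - 8B^k + 2$. Splitting $2B^{2k} = B^{2k}+B^{2k}$ and writing $-8B^k = B^k - B^{k+1}$ (again using $B=9$), one obtains
\[ 2N_0 = B^{2k} + (B-1)\bigl(B^{k+1}+\dots+B^{2k-1}\bigr) + B^k + 2, \]
so $2N_0$ has digit $2$ at position $0$, digit $0$ at positions $1,\dots,k-1$, digit $1$ at position $k$, digit $B-1$ at positions $k+1,\dots,2k-1$, and digit $1$ at position $2k$. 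Multiplying by $B^m$ just shifts this block up to positions $m,\dots,m+2k$.

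Now I would add $N_0^2 + 2N_0 B^m + B^{2m}$ column by column, using $3k+1 \le m \le 4k-1$ and $k\ge2$. The only genuine interaction occurs on positions $m,\dots,4k-1$: there $N_0^2$ has all digits equal to $B-1$ (because $[m,4k-1]\subseteq[3k+1,4k-1]$), while $2N_0 B^m$ contributes digit $2$ at position $m$ and digit $0$ at positions $m+1,\dots,4k-1$ (these lie in the zero block $[m+1,m+k-1]$ of $2N_0B^m$, since $4k-1 \le m+k-1$). Thus position $m$ gives $B-1+2 = B+1$, i.e.\ digit $1$ with carry $1$; this carry turns each of positions $m+1,\dots,4k-1$ from $B-1$ into $0$ as it propagates, and it is finally absorbed at position $4k$, which is $0$ in $N_0^2$ and also $0$ in $2N_0B^m$ because $m+1 \le 4k \le m+k-1$ — here both $m<4k$ and $m\ge 3k+1$ are used. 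Above position $4k$ no carry remains; the remaining digits of $N^2$ are then: the digits of $N_0^2$ below position $m$ (all in $\{0,1,B-1\}$), digit $1$ at positions $m$ and $4k$, the digits of $2N_0B^m$ at positions $m+k,\dots,m+2k$ (which read $1$, then $B-1$ repeated $k-1$ times, then $1$), and the single digit $1$ of $B^{2m}$, which is isolated since $2m > m+2k$. Hence every digit of $N^2$ lies in $\{0,1,B-1\}$, and all three values occur (digit $1$ at position $0$, digit $B-1$ at position $k+1$, digit $0$ at position $1$, the last two using $k\ge2$), which is $\varphi_9(N^2)=\{0,1,B-1\}=\{0,1,8\}$.

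The main obstacle is the carry bookkeeping in the last paragraph: one must verify that the maximal run of digits $B-1$ in $N_0^2$ starting at position $m$ is cancelled exactly by the single unit of carry generated by the digit $2$, and that this carry dies precisely at position $4k$ — neither earlier, nor colliding with the block of $2N_0B^m$ occupying positions $m+k,\dots,m+2k$. This is exactly what the hypotheses $k\ge2$ and $3k+1\le m<4k$ encode, and it is prudent to sanity-check the smallest instance $k=2$, $m=7$, where $N = 4789207$ and $N^2 = 22936503688849 = [1,0,0,1,8,1,1,1,1,1,8,8,1,0,1]_9$.
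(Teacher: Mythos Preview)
Your proof is correct. The approach differs from the paper's: the paper expands $N^2$ directly as a polynomial in $B$, uses the identities $10=B+1$, $27=3B$ (so that $10B^j-2B^{j+1}=B^j-B^{j+1}$, etc.) to rewrite $N^2$ as a sum of terms $B^i$ and $B^i(B^j-1)$, and then checks in one line that the resulting digit blocks do not overlap. You instead decompose $N=N_0+B^m$, import the digit string of $N_0^2$ from Lemma~\ref{lemma_b9_infinite_1paramt_1}, compute the digits of $2N_0$, and then perform column-wise addition with explicit carry propagation.

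Both routes are elementary and of comparable length. The paper's computation is slightly more streamlined (one polynomial identity instead of a carry argument), while your decomposition makes the relationship to Lemma~\ref{lemma_b9_infinite_1paramt_1} transparent and exhibits exactly why the window $3k+1\le m<4k$ is forced: the lone digit $2$ of $2N_0B^m$ must land inside the run of $8$'s of $N_0^2$ at positions $3k+1,\dots,4k-1$, and the resulting carry must die at position $4k$ before reaching the higher block $[m+k,m+2k]$ of $2N_0B^m$. Your numerical check at $(k,m)=(2,7)$ matches.
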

\begin{proof}
  We compute
  \begin{eqnarray*}
  N^2 &=& 1+ 10\cdot B^k- 2\cdot B^{k + 1} + 27\cdot B^{2k}- 10\cdot B^{2k + 1} + B^{2k + 2} + 10\cdot B^{3k}- 2\cdot B^{3k + 1} + 2\cdot B^m + B^{4k}\\ 
  &&  + 10\cdot B^{m+k}- 2\cdot B^{m+k+1} + 2\cdot B^{m+2k} + B^{2m}\\ 
  &\overset{B=9}{=}& 1+ 1\cdot B^k- 1\cdot B^{k + 1} + 2\cdot B^{2k+1}  + 1\cdot B^{3k}- 1\cdot B^{3k + 1} + 2\cdot B^m + B^{4k}\\ 
  &&  + 1\cdot B^{m+k}- 1\cdot B^{m+k+1} + 2\cdot B^{m+2k} + B^{2m}\\
  &=& 1+ 1\cdot B^k +  B^{k + 1}\cdot \left(B^k-1\right) + 1\cdot B^{2k+1} + 1\cdot B^{3k} + B^{3k+1}\cdot\left(B^{m-3k-1}-1\right)+ 1\cdot B^m+1\cdot B^{4k} \\ 
  && + 1\cdot B^{m+k} +B^{m+k+1}\cdot\left(B^{k-1}-1\right)  + 1\cdot B^{m+2k} + 1\cdot B^{2m}.  
  \end{eqnarray*}
  The sequences $B^x\cdot\left(B^y-1\right)$ correspond to a sequence of $y$ times digit $B-1=8$. Note that $k-1,m-3k-1\ge 0$ and $m<4k<m+k$, so that the indicated 
  digits $1$ and $8$ do not interfere. 
\end{proof}  

\subsection{Squares with just two digits in base $\mathbf{B=10}$}
In this section we want to briefly summarize the known results on positive integers $N$ such are not divisible by $10$ but satisfy $|\varpi_{10}(N^2)|=2$. The only known 
examples are given by 
$$
  4, 5, 6, 7, 8, 9, 11, 12, 15, 21, 22, 26, 38, 88, 109, 173, 212, 235, 264, 3114, 81619,
$$
see e.g.\ \url{https://oeis.org/A016070} and \cite[Entry 109]{de2009those}. The corresponding squares are given by 
{\footnotesize$$
  16, 25, 36, 49, 64, 81, 121, 144, 225, 441, 484, 676, 1444, 7744, 11881, 29929, 44944, 55225, 69696, 9696996, 6661661161.
$$}  
Below $10^{41}$ every other square either ends with zero or contains at least three different digits. Given the heuristic argument in Section~\ref{sec_search_algorithms}, 
it is very unlikely that further examples exist.

There are exactly $22$ possibilities for the last two decimal digits:
$$
  01, 04, 09, 12, 14, 16, 18, 24, 25, 29, 34, 36, 33, 45, 46, 47, 48, 49, 56, 67, 69, 89,
$$  	
i.e., the last two digits already determine $\varphi_{10}(N^2)$ except when the square ends with $44$. For some construction principles for infinite patterns, as described in Section~\ref{sec_infinite_patterns}, 
it can be easily shown that no such example exists for just two different decimal digits. With respect to Lemma~\ref{lemma_r_s_construction} we remark 
that the parameters $r$ and $s$ itself have to satisfy $\left|\varphi_{10}(r^2)\backslash \{0\}\right|=\left|\varphi_{10}(s^2)\backslash \{0\}\right|=1$, which just leaves the cases 
$r,s\in\{1,2,3\}$.
  	
\end{document}